\documentclass[a4paper,11pt,oneside,reqno]{amsart} 
\usepackage{amsfonts,amssymb,amsmath,amsthm}
\usepackage{graphicx}
\usepackage{hyperref}
\usepackage{pdfsync}
\parskip 0pt plus .5pt
\overfullrule 1mm
\usepackage{bbm}
\parindent 0cm

\newtheorem{theorem}{Theorem}[section]
\newtheorem{lemma}[theorem]{Lemma}

\newtheorem{corollary}[theorem]{Corollary}

\theoremstyle{definition}
\newtheorem{definition}[theorem]{Definition}

\theoremstyle{remark}
\newtheorem{remark}{Remark}
\newtheorem{example}{Example}

\numberwithin{equation}{section}


\def\Var{\mathop{\rm Var}\nolimits}

\def\<{\langle}
\def\>{\rangle}

\newcommand{\E}{\ensuremath{\mathbb{E}}}



\begin{document}

\title[Eigenvalues of random permutation matrices]{On fluctuations of eigenvalues of random permutation matrices}

\author[G. Ben Arous]{G\'{e}rard Ben Arous}
\address{G. Ben Arous\\
 Courant Institute of the Mathematical Sciences\\
 New York University\\
 251 Mercer Street\\
 New York, NY 10012, USA}
\email{benarous@cims.nyu.edu}
\author[K. Dang]{Kim Dang}
\address{K. Dang\\
Institut f\"{u}r Mathematik\\
Universit\"{a}t Z\"{u}rich\\
Winterthurerstrasse 190\\
CH-8057 Z\"{u}rich, Switzerland}
\email{kim.dang@math.uzh.ch}

\subjclass[2000]{60F05, 15B52, 60B20, 60B15, 60C05, 60E07, 65D30} \keywords{Random Matrices, Linear eigenvalue statistics, Random Permutations, Infinitely divisible distributions, Trapezoidal Approximations}

\date{\today}

\maketitle
\begin{abstract}
Smooth linear statistics of random permutation matrices, sampled under a general Ewens distribution, exhibit an interesting non-universality phenomenon.
Though they have bounded variance, their fluctuations are asymptotically non-Gaussian but infinitely divisible. The fluctuations are asymptotically Gaussian for less smooth linear statistics for which the variance diverges. The degree of smoothness is measured in terms of the quality of the trapezoidal approximations of the integral of the observable.
\end{abstract}
\section{Introduction}
We study the fluctuations of the spectrum of random permutation matrices, or more precisely, of their linear statistics under the Ewens distribution for a wide class of functions. 
The study of linear statistics of the spectrum of random matrices is an active field (for results concerning invariant ensembles, see for instance \cite{costin_lebowitz}, \cite{diaconisevans}, \cite{diaconissh}, \cite{dumitriu_edelman}, \cite{johansson1}, \cite{johansson}, \cite{johnson}, \cite{pastur} or \cite{wieand2} and for non-invariant ensembles see for instance \cite{bai}, \cite{baisilverstein}, \cite{boutet_pastur_shcherbina}, \cite{chatterjee}, \cite{dumitriu_soumik}, \cite{khorunzhy}, \cite{lytova} or \cite{wieand}). All previous results (except \cite{pastur}) have two common features. Firstly,  the variance of linear statistics does not diverge for smooth enough functions, and thus, no normalization is needed to get a limit law, whereas for less smooth functions the variance blows up very slowly (i.e. logarithmically). The second feature is that those fluctuations are asymptotically Gaussian (except in \cite{pastur} again, where invariant ensembles with more than one cut are shown to have non-Gaussian fluctuations).  We will see that the behavior of the variance of the  linear statistics of random permutation matrices  follow the general pattern. But we will also prove that the asymptotic limit law is more surprising, in that it is not Gaussian but infinitely divisible when the function is smooth enough. This is in contrast to the case where the function is less regular, the fluctuations being then indeed asymptotically Gaussian. This Gaussian behavior was previously proved by K. Wieand (see \cite{wieand}) for the special case where the linear statistic is the number of eigenvalues in a given arc, and  for uniformly distributed permutations.\newline

We first introduce our notations. If $N$ is an integer,  $\mathcal{S}_N$ will denote the symmetric group.
We denote by  $M_\sigma$ the permutation matrix defined by the permutation $\sigma\in\mathcal{S}_N$.
For any $1 \leq i,j \leq N$, the entry $M_\sigma (i,j)$  is given by
\begin{equation}
M_\sigma (i,j)=\mathbbm{1}_{i=\sigma(j)} .
\end{equation}

$M_\sigma$ is unitary, its eigenvalues belong to the unit circle $\mathbb{T}$. We denote them  by 
\begin{equation}
\lambda_1(\sigma)=e^{2i\pi\varphi_1(\sigma)},\dots,\lambda_N(\sigma)=e^{2i\pi\varphi_N(\sigma)}\in\mathbb{T},
\end{equation}

where $\varphi_1(\sigma),\dots,\varphi_N(\sigma)$ are in $[0,1]$.

For any real-valued periodic function $f$ of period $1$, we define the linear statistic

\begin{equation}\label{sum1}
I_{\sigma,N}(f):=\textrm{Tr}\widetilde{f}(M_\sigma)= \sum_{i=1}^Nf(\varphi_i(\sigma)),
\end{equation}
where $\widetilde{f}(e^{2i\pi\varphi})=f(\varphi)$ is a function on the unit circle $\mathbb{T}$.

We consider random permutation matrices by sampling $\sigma$ under the Ewens distribution 
\begin{equation}
\nu_{N,\theta}(\sigma)=\frac{\theta^{K(\sigma)}}{\theta(\theta+1)\dots(\theta+N-1)},
\end{equation}
where $\theta>0$ and $K(\sigma)$ is the total number of cycles of the permutation $\sigma$.
The case $\theta=1$ corresponds to the uniform measure on $\mathcal{S}_N$. \\

We study here the asymptotic behavior of the linear statistic $I_{\sigma, N}(f)$ under the Ewens distribution $\nu_{N,\theta}$ for any $\theta>0$, and a wide class of functions $f$. 
As mentioned above, the asymptotic behavior depends strongly on the smoothness of $f$. In order to quantify this dependence, we introduce the sequence
\begin{equation}\label{Rj}
R_j(f)=\frac 1j \sum_{k=0}^{j-1}f\left(\frac kj\right)-\int_0^1f(x)dx.
\end{equation}
Using the periodicity of $f$, it is clear that 
\begin{equation}\label{Rj1}
R_j(f)=\frac 1j\left(\frac 12 f(0)+\sum_{k=1}^{j-1}f\left(\frac kj\right)+\frac 12 f(1)\right)-\int_0^1f(x)dx.
\end{equation}
So that $R_j(f)$ is easily seen to be the error in the composite trapezoidal approximation to the integral of $f$ \cite{davis}. 

We will see that the asymptotic behavior of the linear statistic $I_{\sigma, N}(f)$ is controlled  by the asymptotic behavior of the $R_j(f)$'s, when $j$ tends to infinity, i.e. by the quality of the composite trapezoidal approximation to the integral of $f$. The role played by the quality of the trapezoidal approximation of $f$ might seem surprising, but it is in fact very natural. It is a simple consequence of the fact that the spectrum of the permutation matrix $M_\sigma$ is easily expressed in terms of the cycle counts of the random permutation  $\sigma$, i.e. the numbers $\alpha_j(\sigma)$ of cycles of length $j$, for $1\leq j\leq N$. 
Indeed, the spectrum of $M_\sigma$ consists in the union, for $1\leq j \leq N$, of the sets of  $j$-th roots of unity, each taken with multiplicity $\alpha_j(\sigma)$. 
This gives 

\begin{equation}
I_{\sigma,N}(f)=\sum_{j=1}^N\alpha_j(\sigma)\sum_{\omega^j=1}\widetilde{f}(\omega)= \sum_{j=1}^N\alpha_j(\sigma)\sum_{k=0}^{j-1}f\left(\frac kj\right).
\end{equation}

So that, using the definition \eqref{Rj} of the $R_j$'s, and the obvious fact that $\sum_{j=1}^N j\alpha_j(\sigma)=N$, it becomes clear that:

\begin{equation}\label{RandI}
I_{\sigma,N}(f)= N\int_0^1f(x)dx+\sum_{j=1}^N\alpha_j(\sigma)jR_j(f).
\end{equation}

At this point, and using the basic equality (\ref{RandI}), it is easy to explain intuitively the non-universality phenomenon we have uncovered in this work. When the function $f$ is smooth enough, the sequence $R_j(f)$  converges fast enough to zero to ensure that the linear statistic is well approximated by the first terms in the sum \eqref{RandI}. These terms correspond to the well separated eigenvalues associated with small cycles. The discrete effects related to these small cycles in the spectrum are then dominant, and are responsible for the non-Gaussian behavior. Thus, the appearance of non-universal fluctuations is due to a very drastic localization phenomenon. Indeed, the important eigenvalues for the behavior of smooth linear statistics are atypical in the sense that they correspond to very localized eigenvectors, those localized on small cycles. When the function is less smooth, the variance will diverge (slowly) so that a normalization will be necessary. After this normalization, the discrete effects will be washed away and the limit law will be Gaussian.\newline

We will first describe the fluctuations of linear statistics of smooth enough functions $f$, i.e. in the case when the $R_j(f)$'s decay to $0$ fast enough to ensure that the variance of the linear statistic stays bounded.
 
\begin{theorem}\label{Thm}
Let  $\theta$ be any positive number, and $f$ be a function of bounded variation.
Assume that 
\begin{equation}\label{Hyp}
\sum_{j=1}^{\infty}jR_j(f)^2 \in (0,\infty).
\end{equation}
 Then, 
\begin{enumerate}

\item  under the Ewens distribution $\nu_{N,\theta}$, the distribution of the centered linear statistic 
$$I_{\sigma,N}(f)-\mathbb{E}[I_{\sigma,N}(f)]$$
 converges weakly, as $N$ goes to infinity, to a non-Gaussian infinitely divisible distribution $\mu_{f,\theta}$.
 
 \item The  distribution $\mu_{f,\theta}$ is defined by its Fourier transform 

\begin{equation}\label{mufourier}\widehat{\mu}_{f,\theta}(t)=\exp\left(\theta\int(e^{itx}-1-itx)dM_f(x)\right),\end{equation}
where the L\'{e}vy measure $M_f$ is given by
 \begin{equation}\label{measuremu}
       M_f=\sum_{j=1}^\infty\frac 1j\delta_{jR_j(f)}.
  \end{equation} 
  
  \item The  asymptotic behavior of the expectation of the linear statistic is given by
  \begin{equation}
\E[{I_{\sigma, N}}(f)]= N\int_0^1 f(x)dx+\theta \sum_{j=1}^NR_j(f) + o(1).
\end{equation}
Here, the second term $\sum_{j=1}^NR_j(f)$ may diverge, but not faster than logarithmically.
\begin{equation}
\sum_{j=1}^NR_j(f) = O(\sqrt{\log N})
\end{equation}
  
  \item The asymptotic behavior of the variance of the linear statistic is given by
  \begin{equation}
\Var[{I_{\sigma, N}}(f)]= \theta \sum_{j=1}^N jR_j(f)^2 +o(1).
\end{equation}

  \end{enumerate}
  
\end{theorem}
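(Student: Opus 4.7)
The starting point is the identity \eqref{RandI}, which writes $I_{\sigma,N}(f) - N\int_0^1 f\,dx$ as the linear combination $\sum_{j=1}^{N} jR_j(f)\, \alpha_j(\sigma)$ of the cycle counts. Under the Ewens measure $\nu_{N,\theta}$, a classical theorem (provable via the Feller coupling, or directly from the Ewens sampling formula for the joint law of the $\alpha_j^{(N)}$'s) asserts that, as $N\to\infty$, the vector $(\alpha_1^{(N)},\alpha_2^{(N)},\ldots)$ converges jointly in distribution to independent Poisson variables $(Z_1,Z_2,\ldots)$ with $Z_j\sim\mathrm{Poisson}(\theta/j)$. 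This suggests as candidate limit
\[
S := \sum_{j=1}^\infty jR_j(f)\bigl(Z_j-\tfrac{\theta}{j}\bigr),
\]
which converges in $L^2$ under \eqref{Hyp} since $\Var[jR_j(f)(Z_j-\theta/j)] = \theta j R_j(f)^2$. By independence and the compensated-Poisson identity $\E[e^{it(Z_j-\theta/j)}] = \exp(\tfrac{\theta}{j}(e^{it}-1-it))$, the characteristic function of $S$ coincides with \eqref{mufourier} for the L\'evy measure \eqref{measuremu}; non-Gaussianity is automatic because a nontrivial compensated Poisson is never normal.

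To upgrade this heuristic to genuine weak convergence I would truncate at a level $J$ and split
\[
I_{\sigma,N}(f) - \E[I_{\sigma,N}(f)] = S_N^{\leq J} + S_N^{>J},\qquad S_N^{\leq J} := \sum_{j=1}^{J} jR_j(f)\bigl(\alpha_j^{(N)}-\E\alpha_j^{(N)}\bigr).
\]
The finite-dimensional Poisson convergence gives $S_N^{\leq J}\Rightarrow \sum_{j=1}^J jR_j(f)(Z_j-\theta/j)$ as $N\to\infty$, and this in turn tends to $S$ as $J\to\infty$. The essential remaining task is to bound $\Var[S_N^{>J}]$ uniformly in $N$. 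Here I would invoke the exact Ewens formulas for the first two moments of the $\alpha_j^{(N)}$'s (equivalently, the Arratia--Barbour--Tavar\'e bounds on the Feller coupling) to obtain $\Var[\alpha_j^{(N)}] \leq C\theta/j$ and sufficiently small cross-covariances, yielding
\[
\Var[S_N^{>J}] \leq C\theta \sum_{j=J+1}^{N} jR_j(f)^2,
\]
which vanishes as $J\to\infty$ uniformly in $N$ by \eqref{Hyp}. A standard characteristic-function approximation argument then closes parts (1) and (2).

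For parts (3) and (4), the explicit formula $\E[\alpha_j^{(N)}] = (\theta/j)\,c_{N,j}$, where $c_{N,j}:=\prod_{i=0}^{j-1}(N-i)/(\theta+N-1-i)$ for $j\leq N$, gives
\[
\E[I_{\sigma,N}(f)] = N\int_0^1 f\,dx + \theta\sum_{j=1}^N R_j(f)\,c_{N,j},
\]
and the correction $\theta\sum_j R_j(f)(c_{N,j}-1)$ is $o(1)$ by splitting the sum at $j=N^{1-\delta}$ and combining $|c_{N,j}-1|=O(j/N)$ for small $j$ with the bound $R_j(f)=O(1/j)$ coming from bounded variation. The $O(\sqrt{\log N})$ bound on $\sum_{j=1}^N R_j(f)$ is then immediate from Cauchy--Schwarz: $\sum_{j=1}^N R_j(f) \leq (\sum_{j=1}^N 1/j)^{1/2}(\sum_{j=1}^N jR_j(f)^2)^{1/2}$. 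Part (4) is obtained from the analogous second-moment identity, the off-diagonal contribution being killed by the same covariance bound used in the tail control.

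The principal obstacle is the uniform-in-$N$ control of the tail $S_N^{>J}$ in the regime $j\sim N$, where the naive Poisson approximation of $\alpha_j^{(N)}$ breaks down and the Ewens-specific combinatorial identities (or the refined Feller coupling estimates) become indispensable; the remaining steps are routine manipulations once this tail is tamed.
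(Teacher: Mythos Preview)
Your approach to parts (1)--(2) via truncation at a fixed level $J$ is a legitimate alternative to the paper's strategy. The paper does not truncate: instead it realizes $(\alpha_j^{(N)})_j$ and the independent Poissons $(W_j)_j$ on a common probability space via the Feller coupling, sets $G_N=\sum_{j\le N} u_j \alpha_j^{(N)}$ and $H_N=\sum_{j\le N} u_j W_j$ with $u_j=jR_j(f)$, and proves directly that $\E|G_N-H_N|\to 0$ (and $\E(G_N-H_N)^2\to 0$ for item (4)). The crucial technical step is that these $L^1$/$L^2$ bounds are controlled by Ces\`aro means of order $1$ and $\theta$ of the sequences $(|u_j|)$ and $(u_j^2)$; bounded variation gives $|u_j|\le TV(f)$, and a bounded sequence that is $(C,1)$-convergent (which follows from \eqref{Hyp}) is automatically $(C,\theta)$-convergent for every $\theta>0$. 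Your truncation scheme can be completed, but to prove $\sup_N \Var[S_N^{>J}]\to 0$ you will need essentially the same coupling estimates: a direct attack on the Ewens covariances is delicate for $\theta<1$, where $c_{N,j}=\Psi_N(j)$ grows like $N^{1-\theta}$ for $j$ near $N$, so the heuristic ``$\Var(\alpha_j)\le C\theta/j$ plus small cross-covariances'' is not sharp enough on its own. Once one has $\E|G_N-H_N|\to 0$, the truncation is superfluous, which is why the paper's route is cleaner.

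There is, however, a concrete gap in your argument for part (3). Splitting at $j=N^{1-\delta}$ and using only $|c_{N,j}-1|=O(j/N)$ together with $R_j=O(1/j)$ handles the block $j\le N^{1-\delta}$, but it does \emph{not} handle $N^{1-\delta}<j\le N$. For $\theta\ge 1$ one has $|c_{N,j}-1|\le 1$, and then $\sum_{j>N^{1-\delta}}^{N}|R_j|$ is of order $\delta\log N$, which diverges; for $\theta<1$ the situation is worse, since $c_{N,j}$ itself is unbounded (of order $N^{1-\theta}$ at $j=N$). The paper avoids this computation altogether: the expansion $\E[I_{\sigma,N}(f)] = N\int_0^1 f + \E[H_N]+o(1)$ is an immediate consequence of $|\E G_N-\E H_N|\le\E|G_N-H_N|\to 0$, and $\E[H_N]=\theta\sum_{j\le N}R_j$ exactly. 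Your Cauchy--Schwarz derivation of the $O(\sqrt{\log N})$ bound is correct and coincides with the paper's.
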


\begin{remark}
In this theorem (and in the next), we restrict ourselves to the class of functions $f$ of bounded variation. This is not at all a necessary hypothesis, but it simplifies greatly the statements of the theorems. 
Our proofs give more. We will come back later (in Section 2) to the best possible assumptions really needed for each statement. These assumptions involve the notion of Cesaro means of fractional order, which we wanted to avoid in this introduction.
\end{remark}

\begin{remark}
We  note that the assumption \eqref{Hyp} is not satisfied in the trivial case where $f$ is in the kernel of the composite trapezoidal rule, i.e when the composite trapezoidal rule gives the exact approximation to the integral of $f$ for all $j$'s. In this case, the sequence $R_j(f)$ is identically zero and the linear statistic is non-random. 
Obviously, this is the case for every constant function $f$ and for every odd function $f$, i.e. if 
\begin{equation}
f(x)=-f(1-x).
\end{equation}
It is indeed easy to see then that $R_j(f)=0$ for all $j\geq1$. 
\end{remark}

\begin{remark}
Consider now the even part of $f$, i.e.
\begin{equation}
f_{\textrm{even}}(x)=\frac 12\left(f(x)+f(1-x)\right).
\end{equation} 
It is clear then that 
\begin{equation}R_j(f)=R_j(f_{\textrm{even}}),
\end{equation} 
so that the assumption \eqref{Hyp} in fact only deals with the even part of $f$.
\end{remark}

\begin{remark} 
In order to avoid the possibility mentioned above for all $R_j(f)$'s to be zero, we introduce the following assumption 
 \begin{equation}\label{A6}
f_{\textrm{even}}\textrm{ is not a constant.}
\end{equation}
Note that, in general, it is not true that \eqref{A6} implies that the sequence of $R_j(f)$'s is not identically  zero, even when $f$ is continuous! (See \cite{hille} or \cite{loxtonsanders}.)
But when $f$ is in the Wiener algebra, i.e. when its Fourier series converges absolutely, then \eqref{A6} does imply that one of the $R_j(f)$'s is non zero  (see \cite{loxtonsanders}, p. 260). 
\end{remark}

\begin{remark}\label{expandvar}
It is in fact easy to compute explicitly the value of the expectation and variance of the linear statistic $I_{\sigma, N}(f)$ for any value of $\theta$ and of $N$.
This is done below, in Section~\ref{expectation_and_variance}. The asymptotic analysis is not immediate for the values of $\theta<1$.
\end{remark}

We now want to show how the assumption \eqref{Hyp} can easily be translated purely in terms of manageable regularity assumptions on the function $f$ itself.

\begin{corollary}\label{cor1}
If $f\in\mathcal{C}^1$, let $\omega(f',\delta)$ be the modulus of continuity of its derivative $f'$. 
Assume that 
\begin{equation}\label{omegacond}
\sum_{j=1}^\infty\frac1j\omega(f',1/j)^2<\infty,
\end{equation}
also assume \eqref{A6} in order to avoid the trivial case mentioned above, 
then the conclusions of Theorem~\ref{Thm} hold.
\end{corollary}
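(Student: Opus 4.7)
The plan is to reduce Corollary \ref{cor1} to Theorem \ref{Thm} by verifying the hypothesis \eqref{Hyp}. Since $f\in\mathcal{C}^1$ on the circle is automatically of bounded variation, the only task is to prove that the series $\sum_{j\geq 1}jR_j(f)^2$ is finite and strictly positive.

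For the finiteness, I would establish the classical trapezoidal error bound
\begin{equation*}
|R_j(f)|\leq \frac{1}{2j}\,\omega(f',1/j)
\end{equation*}
by integrating by parts on each subinterval: on $[k/j,(k+1)/j]$ one has
\begin{equation*}
\int_{k/j}^{(k+1)/j}f(x)\,dx - \frac{1}{2j}\left(f\!\left(\tfrac{k}{j}\right)+f\!\left(\tfrac{k+1}{j}\right)\right)=-\int_{k/j}^{(k+1)/j}\left(x-\tfrac{2k+1}{2j}\right)f'(x)\,dx.
\end{equation*}
Since the kernel $x-(2k+1)/(2j)$ integrates to zero on the subinterval, I may subtract $f'((2k+1)/(2j))$ from $f'(x)$; bounding the kernel by $1/(2j)$ and the resulting difference of $f'$ by $\omega(f',1/(2j))\leq\omega(f',1/j)$ produces a local error of size at most $\omega(f',1/j)/(2j^2)$, and summing over the $j$ subintervals yields the bound above. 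Consequently $jR_j(f)^2\leq \omega(f',1/j)^2/(4j)$, and \eqref{omegacond} immediately gives $\sum_j jR_j(f)^2<\infty$.

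For the strict positivity, I would observe that any $\mathcal{C}^1$ function on the torus lies in the Wiener algebra: since $\widehat{f'}(n)=2\pi i n\,\widehat f(n)$ and $f'\in L^2$, Cauchy--Schwarz gives
\begin{equation*}
\sum_{n\neq 0}|\widehat f(n)|\leq \left(\sum_{n\neq 0}\frac{1}{(2\pi n)^2}\right)^{1/2}\left(\sum_{n\neq 0}|\widehat{f'}(n)|^2\right)^{1/2}<\infty.
\end{equation*}
The Loxton--Sanders result invoked in the remark following Theorem \ref{Thm} then ensures that assumption \eqref{A6} forces $R_j(f)\neq 0$ for at least one $j$, so $\sum_j jR_j(f)^2>0$. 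Hypothesis \eqref{Hyp} is thereby verified and Theorem \ref{Thm} applies directly.

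The principal obstacle I would expect is obtaining a trapezoidal error bound compatible with the $j^{-1}\omega(f',1/j)^2$ scale of \eqref{omegacond}: the routine $\mathcal{C}^2$ bound $O(j^{-2})$ is unavailable under $\mathcal{C}^1$ alone, and so exploiting the mean-zero property of the Peano kernel in order to replace $\|f''\|_\infty$ by $\omega(f',\cdot)$ is the one technical point of the argument; everything else is a bookkeeping matter.
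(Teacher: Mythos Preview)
Your proposal is correct and follows essentially the same approach as the paper: both arguments bound $|R_j(f)|$ by a constant times $j^{-1}\omega(f',1/j)$, deduce finiteness of $\sum jR_j^2$ from \eqref{omegacond}, and obtain strict positivity via the Wiener algebra together with the Loxton--Sanders result. The only tactical differences are that the paper obtains the trapezoidal bound as a consequence of the $\omega_2$ estimate in Lemma~\ref{Jackson}(i) combined with the Mean Value Theorem, whereas you derive it directly from the Peano-kernel integration by parts, and you invoke $\mathcal{C}^1\Rightarrow$ Wiener algebra rather than passing through \eqref{omegacond}; both routes are equally valid and equally short.
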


Of course, the condition \eqref{omegacond} is satisfied if $f\in\mathcal{C}^{1+\alpha}$, for $0<\alpha<1$, i.e. if $f'$ is $\alpha$-H\"{o}lder continuous. 

We can give variants of the assumptions of smoothness of $f$ given in Corollary~\ref{cor1}. For instance,

\begin{corollary}\label{cor2}
If $f$ has a derivative in $L^p$, let  $\omega^{(p)}(f',\delta)$ be the modulus of continuity in $L^p$ of its derivative $f'$, i.e.
\begin{equation}
\omega^{(p)}(f',\delta)=\sup_{0\leq h\leq \delta}\left\{\int_0^1|f'(x+h)-f'(x)|^p\right\}^{1/p}.
\end{equation}
Assume that
\begin{equation}\label{cor2omega}
\omega^{(p)}(f',\delta)\leq\delta^\alpha\quad\textrm{with }\alpha>\frac 1p,
\end{equation}
also assume \eqref{A6} in order to avoid the trivial case mentioned above, 
then the conclusions of Theorem~\ref{Thm} hold.
\end{corollary}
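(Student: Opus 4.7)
The plan is to verify the hypotheses of Theorem~\ref{Thm}: bounded variation of $f$ and $\sum_{j\ge 1} j R_j(f)^2 \in (0,\infty)$. Since $f'\in L^p \subset L^1$ makes $f$ absolutely continuous, bounded variation is automatic, and it remains to bound $\sum_j j R_j(f)^2$ both from above (finiteness) and from below (positivity).

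\textbf{Upper bound.} Integration by parts on each subinterval yields the identity
\begin{equation}
R_j(f) = -\int_0^1 \psi_j(x) f'(x) \, dx,
\end{equation}
where $\psi_j$ is the periodic sawtooth with $\psi_j(x) = x - (2k+1)/(2j)$ on $[k/j,(k+1)/j]$; it satisfies $\|\psi_j\|_\infty \le 1/(2j)$ and has zero mean on each of these subintervals. Subtracting from $f'$ its piecewise average $\bar m_j$ (at scale $1/j$) leaves the integral unchanged, so by H\"older,
\begin{equation}
|R_j(f)| \le \|\psi_j\|_\infty \, \|f' - \bar m_j\|_1 \le \frac{1}{2j} \|f' - \bar m_j\|_p \le \frac{C}{j}\, \omega^{(p)}(f', 1/j),
\end{equation}
the last inequality being the standard $L^p$-approximation estimate for piecewise averages. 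Invoking \eqref{cor2omega} then gives $|R_j(f)| \le C j^{-1-\alpha}$, hence $\sum_j j R_j(f)^2 \le C^2 \sum_j j^{-1-2\alpha} < \infty$. Note that this step only uses $\alpha > 0$.

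\textbf{Positivity.} The stronger condition $\alpha > 1/p$ enters here. By a standard Bernstein-type theorem, $\omega^{(p)}(f',\delta) \le \delta^\alpha$ with $\alpha > 1/p$ forces the Fourier series of $f'$, and hence of $f$, to converge absolutely, so $f$ lies in the Wiener algebra. Together with assumption \eqref{A6}, the Loxton-Sanders result cited in Remark~4 then yields $R_{j_0}(f) \neq 0$ for at least one $j_0$, so $\sum_j j R_j(f)^2 > 0$. Hypothesis \eqref{Hyp} is thus verified and Theorem~\ref{Thm} applies.

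The main technical point is the $L^p$-approximation bound $\|f' - \bar m_j\|_p \le C\,\omega^{(p)}(f', 1/j)$, a classical estimate that follows from Fubini and the definition of $\omega^{(p)}$. Otherwise the argument parallels the proof of Corollary~\ref{cor1}, with the uniform modulus replaced by the $L^p$ modulus of continuity throughout.
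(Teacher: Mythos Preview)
Your proof is correct and the overall structure---verify bounded variation, prove $\sum jR_j^2<\infty$, then invoke Loxton--Sanders for positivity---matches the paper. The difference lies in how the upper bound on $|R_j(f)|$ is obtained. The paper routes through the second-order modulus: it uses $|R_j(f)|\le C\,\omega_2(f,1/2j)$ from Lemma~\ref{Jackson}(i), then bounds $\omega_2(f,\delta)\le \omega^{(p)}(f',\delta)\,\delta^{1-1/p}$ via H\"older, arriving at $|R_j(f)|\le C j^{-(1+\alpha-1/p)}$ and hence $jR_j^2\le C j^{-1-2(\alpha-1/p)}$, which is where the condition $\alpha>1/p$ is used for summability. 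Your sawtooth/Euler--Maclaurin representation, combined with subtracting the piecewise averages, avoids the $j^{1/p}$ loss and yields the sharper $|R_j(f)|\le C j^{-(1+\alpha)}$; as you note, $\alpha>0$ already suffices for convergence of the series, and $\alpha>1/p$ is only invoked later for the Wiener-algebra step. This is a genuine, if modest, improvement over the paper's estimate.

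One small imprecision: the claim that the Fourier series of $f'$ itself converges absolutely is stronger than what a Bernstein-type argument gives from $\alpha>1/p$ alone (classical Bernstein applied to $f'$ would want H\"older exponent $>1/2$). What you actually need---and what does follow---is that $f$ lies in the Wiener algebra: $\omega^{(p)}(f',\delta)\le\delta^\alpha$ with $\alpha>1/p$ embeds $f'$ into a H\"older class $C^{\alpha-1/p}$, so $f\in C^{1+\epsilon}$ for some $\epsilon>0$, whence $|\hat f(n)|=O(n^{-1-\epsilon})$ is absolutely summable. With this tweak the positivity step goes through exactly as in the paper.
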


It is of course also possible to relate the $R_j(f)$'s to the Fourier coefficients of $f$. Indeed, if the Fourier series of $f$ 
\begin{equation}
f(x)=a_0+\sum_{n=1}^\infty a_n\cos(n2\pi i x)+\sum_{n=1}^\infty b_n\sin(n2\pi i x)
\end{equation}
converges, then the Poisson summation formula shows that
\begin{equation}\label{poissonsummation}
R_j(f)=\sum_{n=1}^\infty a_{jn}.
\end{equation}

Using this relation, it is easy to prove the following Corollary:
\begin{corollary}\label{cor3}
 If $f$ is in the Sobolev space $H^{s}$, for $s>1$, and if one assumes \eqref{A6}, then the conclusions of Theorem~\ref{Thm} hold.
\end{corollary}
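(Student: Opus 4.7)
The plan is to verify the single hypothesis \eqref{Hyp} of Theorem~\ref{Thm}, since the remaining prerequisites follow easily. For $f \in H^s$ with $s > 1$, the derivative $f'$ lies in $H^{s-1}\subset L^2(\mathbb{T})\subset L^1(\mathbb{T})$, so $f$ is absolutely continuous and hence of bounded variation. Moreover, since $s > 1/2$, the Cauchy--Schwarz inequality applied to $\sum_n |a_n| = \sum_n n^{-s}\cdot n^s|a_n|$ gives absolute convergence of the Fourier series, so $f$ lies in the Wiener algebra. By the remark following \eqref{A6}, the assumption that $f_{\textrm{even}}$ is non-constant then forces $R_j(f)\neq 0$ for some $j$, yielding the strict positivity in \eqref{Hyp}; only the finiteness of $\sum_j j R_j(f)^2$ remains to be established.

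For that, I would start from the Poisson summation identity \eqref{poissonsummation}, $R_j(f)=\sum_{n\geq 1} a_{jn}$, and apply Cauchy--Schwarz with weights $(jn)^{-2s}$:
\begin{equation*}
R_j(f)^2 \leq \Bigl(\sum_{n=1}^\infty (jn)^{-2s}\Bigr)\Bigl(\sum_{n=1}^\infty (jn)^{2s} a_{jn}^2\Bigr) = j^{-2s}\,\zeta(2s)\sum_{n=1}^\infty (jn)^{2s} a_{jn}^2.
\end{equation*}
Multiplying by $j$, summing in $j$, and interchanging the order of summation via the substitution $m=jn$ (so that for each $m$ the index $j$ ranges over divisors of $m$), I obtain
\begin{equation*}
\sum_{j=1}^\infty j\,R_j(f)^2 \leq \zeta(2s)\sum_{m=1}^\infty m^{2s} a_m^2 \sum_{j\mid m} j^{1-2s}.
\end{equation*}

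The main (and essentially only) delicate step is the uniform bound on the divisor sum. Because $s>1$, the exponent satisfies $2s-1>1$, and consequently
\begin{equation*}
\sum_{j\mid m} j^{1-2s} \leq \sum_{j=1}^\infty j^{1-2s} = \zeta(2s-1) < \infty.
\end{equation*}
This gives $\sum_j j R_j(f)^2 \leq \zeta(2s)\zeta(2s-1)\sum_m m^{2s} a_m^2<\infty$ since $f\in H^s$. The threshold $s>1$ appears precisely at this step: the weaker condition $s>1/2$, sufficient for absolute convergence of the Fourier series, is not enough to tame the divisor sum, which is why the corollary is stated at $s>1$. With \eqref{Hyp} verified, Theorem~\ref{Thm} applies and yields the conclusion.
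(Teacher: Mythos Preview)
Your proof is correct. Both you and the paper start from the Poisson summation identity $R_j(f)=\sum_{n\ge1}a_{jn}$, but the subsequent estimates differ. The paper writes $a_k=c_k/k^s$ with $(c_k)\in\ell^2$, so that $R_j(f)=j^{-s}C_j$ with $C_j=\sum_{\ell\ge1}c_{j\ell}\ell^{-s}$, and then invokes an external result (Lemma~4 of Rahman--Schmeisser) to conclude $(C_j)\in\ell^2$; the bound $\sum_j jR_j(f)^2=\sum_j j^{1-2s}C_j^2\le\sum_j C_j^2<\infty$ is then immediate. Your route avoids this citation entirely: a direct Cauchy--Schwarz with weights $(jn)^{-s}$, followed by the substitution $m=jn$ and the crude divisor-sum bound $\sum_{j\mid m}j^{1-2s}\le\zeta(2s-1)$, yields the explicit inequality $\sum_j jR_j(f)^2\le\zeta(2s)\zeta(2s-1)\|f\|_{H^s}^2$. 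Your argument is more elementary and fully self-contained, and makes transparent exactly where the threshold $s>1$ enters (convergence of $\zeta(2s-1)$); the paper's version, by contrast, establishes the slightly sharper intermediate fact that $(j^sR_j(f))_j\in\ell^2$, at the cost of relying on an outside reference.
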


\begin{remark}
The formula \ref{poissonsummation} gives an expression for the asymptotic variance of the linear statistic
\begin{equation}
\lim_{N \to \infty}\Var[{I_{\sigma, N}}(f)]= \theta \sum_{j=1}^{\infty} jR_j(f)^2 = \theta\sum_{k,l=1}^{\infty}a_ka_ld(k,l),
\end{equation}
where  $d(k,l)$ is the sum of the divisors of the integers $k$ and $l$.
\end{remark}

We now give two interesting examples of functions satisfying the conditions of Theorem~\ref{Thm}:

\begin{example}
Let $f$ be a trigonometric polynomial of degree $k$. Then, $R_j(f)=0$ for all $j>k$. Obviously, the condition \eqref{Hyp} of Theorem~\ref{Thm} is satisfied and the limit distribution $\mu_{f,\theta}$ is a compound Poisson distribution with $M_f$ given by
\begin{equation}
M_f=\theta\sum_{j=1}^k\frac 1j\delta_{jR_j}.
\end{equation}
\end{example}

\begin{example}
Let $f\in\mathcal{C}^\infty$ and $f\equiv 1$ on $[a,b]$ and $f\equiv 0$ on $[a-\epsilon, b+\epsilon]^c$, then the result of Theorem~\ref{Thm} applies. So, the centered linear statistic $I_{\sigma_N}(f)-\mathbb{E}[I_{\sigma_N}(f)]$ has a finite variance and a non-Gaussian infinitely divisible limit distribution. This is a very different behavior from the case $f=\mathbbm{1}_{[a,b]}$ (see below), where the limit is Gaussian.
\end{example}

We now give our second main result, i.e. sufficient conditions ensuring that the variance of the linear statistic $I_{\sigma,N}(f)$ diverges and that the linear statistic converges in distribution to a Gaussian, when centered and normalized.

\begin{theorem}\label{Thm2}
Let  $\theta$ be any positive number, and $f$ be a bounded variation function such that 
\begin{equation} \label{Hyp2}
\sum_{j=1}^{\infty}jR_j(f)^2 = \infty.
\end{equation}
 Then, 
\begin{enumerate}

\item  under the Ewens distribution $\nu_{N,\theta}$, the distribution of the centered and normalized linear statistic 
\begin{equation}
\frac{I_{\sigma,N}(f)-\mathbb{E}[I_{\sigma,N}(f)]}{\sqrt{\Var{I_{\sigma, N}(f)}}}
\end{equation}
 converges weakly, as $N$ goes to infinity, to the Gaussian standard distribution $\mathcal{N}(0,1)$. 
  
  \item The  asymptotic behavior of the expectation of the linear statistic is given by
  \begin{equation}
\E[{I_{\sigma, N}}(f)]= N\int_0^1 f(x)dx+ \theta \sum_{j=1}^NR_j(f) + O(1).
\end{equation}
Here, the second term $\sum_{j=1}^NR_j(f)$ may diverge, but not faster than logarithmically.
\begin{equation}
\sum_{j=1}^NR_j(f) = O(\log N)
\end{equation}
  
  \item The asymptotic behavior of the variance of the linear statistic is given by
  \begin{equation}
\Var[{I_{\sigma, N}}(f)] \sim \theta \sum_{j=1}^N jR_j(f)^2. 
\end{equation}

  \end{enumerate}
  
\end{theorem}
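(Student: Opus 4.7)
The plan is to exploit the fundamental identity~\eqref{RandI}, which reduces the problem to a central limit theorem for a weighted sum of centered cycle counts:
\begin{equation*}
I_{\sigma,N}(f)-\E[I_{\sigma,N}(f)] \;=\; \sum_{j=1}^{N} jR_j(f)\bigl(\alpha_j(\sigma)-\E[\alpha_j(\sigma)]\bigr).
\end{equation*}
Since $f$ is of bounded variation, the classical error estimate for the composite trapezoidal rule (see \cite{davis}) gives $R_j(f)=O(1/j)$, so the weights $jR_j(f)$ are uniformly bounded. Under the Ewens distribution, the cycle counts are asymptotically independent Poisson variables of parameter $\theta/j$; the Feller coupling realizes, on a common probability space, an independent family $(Z_j)_{j\ge 1}$ with $Z_j\sim\mathrm{Poisson}(\theta/j)$ and provides a quantitative total-variation bound on any prefix $\{1,\dots,K\}$ with $K\ll N$.

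The core of the argument is a truncation and Lindeberg step. I would fix a sequence $K_N\to\infty$ with $K_N=o(N)$ and split the sum into $S_N^{\le}+S_N^{>}$ at level $K_N$. On the small-index part, the Feller coupling allows me to replace $S_N^{\le}$ by the sum of independent terms
\begin{equation*}
T_N \;:=\; \sum_{j=1}^{K_N} jR_j(f)\bigl(Z_j-\theta/j\bigr),
\end{equation*}
up to an error whose $L^2$-norm is controlled by the coupling discrepancy. Each summand in $T_N$ is bounded by a constant times $|Z_j-\theta/j|$ because $|jR_j(f)|\le C$, while $\Var T_N = \theta\sum_{j\le K_N} jR_j(f)^2\to\infty$ under \eqref{Hyp2} if $K_N$ grows fast enough. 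The Lindeberg condition then reduces to a uniform moment bound for Poisson variables with parameter at most $\theta$ and is routine.

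For the large-index remainder $S_N^{>}$, I would exploit the Ewens-sampling estimates $\Var\alpha_j=O(1/j)$ and $|\Cov(\alpha_i,\alpha_j)|$ small, combined with the bound $jR_j(f)^2=O(1/j)$, to obtain
\begin{equation*}
\Var S_N^{>} \;\le\; C\sum_{j>K_N} jR_j(f)^2 \;+\; \text{(covariance remainder)},
\end{equation*}
which, for a suitable choice of $K_N$, is negligible compared with the divergent variance. Combining the three pieces gives~(1). For parts~(2) and~(3), I would read off the exact finite-$N$ expressions for $\E[I_{\sigma,N}(f)]$ and $\Var[I_{\sigma,N}(f)]$ from Section~\ref{expectation_and_variance}, substitute $R_j(f)=O(1/j)$, and use Cauchy--Schwarz to derive $\sum_{j=1}^{N}|R_j(f)|=O(\log N)$.

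The main obstacle is the joint calibration of $K_N$: it must be large enough that the tail $S_N^{>}$ is negligible next to the divergent variance, yet small enough that the Feller coupling controls $S_N^{\le}-T_N$ on the same scale. This is delicate because the total variance diverges only logarithmically, so the margin is tight. A secondary subtlety is that replacing $\E[\alpha_j]$ by $\theta/j$ introduces a deterministic shift that has to be absorbed cleanly into the asymptotic expectation in~(2) without disturbing the Gaussian normalization.
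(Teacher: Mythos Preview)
Your overall strategy---reduce to cycle counts via \eqref{RandI}, use the bounded-variation estimate $|jR_j(f)|\le C$, and pass to independent Poissons through the Feller coupling---is the same as the paper's. The difference is that you introduce a truncation at level $K_N$ and then worry (rightly) about calibrating it. The paper avoids this entirely: it compares the \emph{full} sums $G_N=\sum_{j=1}^N u_j C_j(N)$ and $H_N=\sum_{j=1}^N u_j W_j$ and proves the refined pointwise coupling bounds of Lemmas~\ref{L1boundwithmax} and \ref{L2boundwithmax}, namely $\E|G_N-H_N|\le C(\theta)\max_j|u_j|$ and $\E(G_N-H_N)^2\le C(\theta)\max_j|u_j|^2$. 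Since $\max_j|u_j|=O(1)$ while $\eta_N\to\infty$, this immediately gives $G_N-H_N=o_{L^2}(\eta_N)$, and your ``delicate calibration'' problem disappears. The CLT for $H_N$ is then established in Lemma~\ref{gaussian} via the L\'evy--Khintchine convergence theorem rather than Lindeberg, though either works under the hypothesis $\max_j|u_j|=o(\eta_N)$.

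Two further points. First, your plan for items (2) and (3)---to read them off the explicit formulae in Section~\ref{expectation_and_variance}---is not as routine as it looks: for $\theta\neq 1$ those formulae involve the factors $\Psi_N(j)$, and extracting the stated asymptotics (especially for $\theta<1$) is flagged in the paper as ``not a trivial matter''. The paper instead derives (2) and (3) as by-products of the same $L^1$ and $L^2$ coupling bounds. Second, your control of $\Var S_N^{>}$ requires handling the Ewens covariances $\Cov(\alpha_j,\alpha_{j'})$ for large $j,j'$; this can be done, but the cross terms where $j+j'>N$ do not vanish and need a separate estimate, so the ``(covariance remainder)'' you gloss over carries real work. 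None of this is fatal, but the paper's no-truncation route is both shorter and dodges all three difficulties.
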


\begin{example}
Consider $f=\mathbbm{1}_{(a,b)}$ for an interval $(a,b)\subset[0,1]$. $I_{\sigma,N}(f)$ is then simply the number of eigenvalues in the arc $[e^{2i\pi a}, e^{2i\pi b}]$. The function $f$ is obviously of bounded variation. 
This example has been treated in the simple case where $\theta=1$ in \cite{wieand}. We will see here that Theorem~\ref{Thm2} enables us to extend the results of \cite{wieand} to any value of $\theta>0$.
Indeed, the error in the composite trapezoidal approximation $R_j(f)$ is very easy to compute for an indicator function:
\begin{equation}
R_j(f)=\frac 1j\left(\{ja\}-\{jb\}\right).
\end{equation}
Obviously, in this case, Theorem~\ref{Thm2} applies and we have that
$$\frac{I_{\sigma, N}(f)-\mathbb{E}[I_{\sigma, N}(f)]}{\sqrt{C\theta\log N}}\stackrel{(d)}{\Rightarrow}\mathcal{N}(0,1).$$

We can also deduce the asymptotic behavior of the expectation and of the variance, using the conclusions of Theorem~\ref{Thm2} and the computations made in the particular  case $\theta=1$ in \cite{wieand}.
Indeed, it is shown in \cite{wieand}, that for a constant $c_1(a,b)$ 
\begin{equation}
\sum_{j=1}^NR_j(f)=  -c_1(a,b) \log N + o(\log N).
\end{equation}
So that from the statement proven in Theorem~\ref{Thm2} : 
\begin{equation}
\E[{I_{\sigma, N}}(f)]= N\int_0^1 f(x)dx+\theta \sum_{j=1}^NR_j(f) + O(1).
\end{equation}
We see that
\begin{equation}
\E[{I_{\sigma, N}}(f)]= N(b-a) - \theta c_1\log N + o(\log N).
\end{equation}
The value of $c_1(a,b)$ is studied in \cite{wieand}. It depends on the fact that a and b are rational or not. It vanishes if a and b are both irrational.

We also have, from the computations in \cite{wieand}, that there exists a positive constant $c_2(a,b)$ such that
\begin{equation}
\sum_{j=1}^NjR_j(f)^2= c_2(a,b) \log N + o(\log N).
\end{equation}
So that we have for any $\theta>0$, by Theorem~\ref{Thm2}, that
$$\Var{I_{\sigma, N}(f)}\sim c_2(a,b) \theta\log N.$$
The value of $c_2(a,b)$ also depends on the arithmetic properties of a and b, and is studied in \cite{wieand}.
\end{example}

\begin{remark}
We want to point out that $f$ being of bounded variation is not a necessary condition in order to get a Gaussian limit distribution. But, when $f$ is of bounded variation, it is easy to see that there exists a constant $C$ such that
\begin{equation}
\Var{I_{\sigma, N}(f)}\leq C\log N.
\end{equation}
The case treated in the example above gives the maximal normalization for functions of bounded variation. 
\end{remark}

The remainder of this article is organized as follows. In Section~2, we state our results with weaker assumptions than the theorems given in this introduction. These assumptions use the classical notion of Cesaro means of fractional order, which we recall in the first subsection of Section 2.  In Section 3, we prove the Corollaries \ref{cor1}, \ref{cor2} and  \ref{cor3}, using estimates on the trapezoidal approximation. In order to prove the main results of Section 2, our main tool will be the Feller coupling. This is natural since the problem is translated by the basic equality \eqref{RandI} in terms of cycle counts of random permutations. In Section 4, we will need to improve on the known bounds for the approximation given by this coupling (see for example \cite{arratiabarbourtavare2} or \cite{barbourtavare}) and relate these bounds to Cesaro means. We will then be ready to prove in Section 5 and Section 6 our general results as stated in Section 2 and that these more general results imply the two theorems of this introduction Theorems~\ref{Thm} and \ref{Thm2}.. Finally in the very short Section 7, we give an explicit expression for the expectation and variance of the linear statistics as promised in Remark \ref{expandvar}.

\section{Cesaro means and convergence of linear statistics}

\subsection{Cesaro Means} \ \newline

We will state here our optimal results in terms of convergence of the Cesaro means of fractional order.
First, we will need to recall the classical notion of Cesaro means of order $\theta$ and of Cesaro convergence $(C,\theta)$ for a sequence of real numbers, say $s= (s_j)_{j \geq 0}$ (see \cite{zygmund}, Volume 1, p. 77, formulae (1.14) and (1.15)).
\begin{definition}\label{cesaro_def}
\begin{enumerate}
\item[(i)] The Cesaro numbers of order $\alpha > -1$ are given by
\begin{equation}
A^{\alpha}_N := \binom{N+\alpha}{N}.
\end{equation}
\item[(ii)] The Cesaro mean of order $\theta>0$ of the sequence $s= (s_j)_{j \geq 0}$ is given by
\begin{equation}
\sigma_N^{\theta}(s) = \sum_{j=0}^{N} \frac{A^{\theta-1}_{N-j}}{A^{\theta}_{N}} s_j.
\end{equation}
\item[(iii)] A sequence  of real numbers $s=(s_j)_{j \geq 0}$ is said to be convergent in Cesaro sense of order $\theta$ (or in $(C,\theta)$ sense) to a limit $\ell$ iff the sequence of Cesaro means $\sigma_N^{\theta}(s)$ converges to $\ell$.
\end{enumerate}
\end{definition}

Let us recall the following basic facts about Cesaro convergence (see \cite{zygmund}):
\begin{lemma}\label{cesarofacts}

\begin{enumerate}
\item[(i)] Convergence in the $(C,\theta_1)$ sense to a limit $\ell$, implies convergence $(C,\theta_2)$ to the same limit for any $\theta_1 \leq \theta_2$.
\item[(ii)] Usual convergence is (C,0) convergence. The classical Cesaro convergence is (C,1) convergence.
\item[(iii)] If the sequence $(s_j)_{j \geq 0}$ is bounded and converges $(C,\theta_1)$ to a limit $\ell$ for some value $\theta_1>0$, then it converges $(C,\theta)$ to the same limit, for any $\theta>0$.
\end{enumerate}
\end{lemma}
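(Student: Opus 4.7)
The plan is to proceed via the standard generating-function calculus for fractional Cesaro means, together with a Tauberian argument for the descending direction in (iii).

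Statement (ii) follows by direct computation. Using $A_0^{-1}=1$ and $A_n^{-1}=0$ for $n\geq 1$, the defining sum collapses to $\sigma_N^0(s)=s_N$; and with $A_n^0=1$ and $A_N^1=N+1$, one recovers $\sigma_N^1(s)=\tfrac{1}{N+1}\sum_{j=0}^N s_j$, the classical Cesaro mean.

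For (i), the key tool is the generating function identity $\sum_n A_n^\alpha z^n=(1-z)^{-(\alpha+1)}$. Setting $S_n^\theta:=A_n^\theta\,\sigma_n^\theta(s)=\sum_{j=0}^n A_{n-j}^{\theta-1}s_j$, this convolution gives $\sum_n S_n^\theta z^n=(1-z)^{-\theta}\sum_j s_j z^j$. For $\theta_2>\theta_1$, factoring $(1-z)^{-\theta_2}=(1-z)^{-(\theta_2-\theta_1)}(1-z)^{-\theta_1}$ and reading off coefficients yields
\begin{equation*}
\sigma_N^{\theta_2}(s)=\sum_{k=0}^N\frac{A_{N-k}^{\theta_2-\theta_1-1}\,A_k^{\theta_1}}{A_N^{\theta_2}}\,\sigma_k^{\theta_1}(s).
\end{equation*}
Because $\theta_2-\theta_1-1>-1$ the weights are nonnegative; the convolution identity $\sum_{k=0}^N A_{N-k}^{\theta_2-\theta_1-1}A_k^{\theta_1}=A_N^{\theta_2}$ forces the row sums to equal $1$; and the asymptotics $A_n^\alpha\sim n^\alpha/\Gamma(\alpha+1)$ make each fixed-$k$ weight tend to zero as $N\to\infty$. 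The Silverman-Toeplitz criterion then certifies this as a regular summability method, so convergence in $(C,\theta_1)$ propagates to $(C,\theta_2)$.

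For (iii), the direction $\theta\geq\theta_1$ is an immediate consequence of (i). The substantive case is descending the Cesaro scale to $0<\theta<\theta_1$: the triangular system displayed above is no longer a regular summation matrix in that direction, and one needs a genuine Tauberian argument, which is the main obstacle. The classical Hardy-Littlewood theorem for fractional Cesaro means supplies exactly what is required, with boundedness of $(s_n)$ acting as a Tauberian hypothesis strong enough to invert summability at any positive order. Rather than reprove this, I would appeal directly to \cite{zygmund}, Vol.~1, Chapter~III, as the statement of the lemma already indicates.
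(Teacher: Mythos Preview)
Your proposal is correct. The paper itself does not give a proof of this lemma at all: it simply states that the facts are classical and refers to Zygmund, pointing specifically to Lemma~(2.27), p.~70, Volume~2 for item~(iii). Your write-up therefore goes further than the paper does, supplying the standard generating-function/Toeplitz argument for~(i) and the direct computation for~(ii), and then, like the paper, deferring to Zygmund for the genuine Tauberian content in~(iii). One small point: the precise reference the paper gives for~(iii) is in Volume~2 of Zygmund, not Volume~1, Chapter~III; you may want to align your citation with that.
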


These facts are all classical, see \cite{zygmund} for a proof, in particular Lemma (2.27), p. 70, Volume 2 for a proof of (iii).

\subsection{The case of bounded variance, non-Gaussian limits} \ \newline

We will give here a sharper statement than Theorem~\ref{Thm} and prove that it implies Theorem~\ref{Thm}.
Define the sequence $u(f)= (u_j(f))_{j\geq1}=(jR_j(f))_{j\geq1}$.

\begin{theorem}\label{Thm1general}
Let  $\theta$ be any positive number, and assume that the sequence $|u(f)|= (|u_j(f)|)_{j\geq1}$ converges to zero in the Cesaro $(C,  \theta)$ sense if $\theta < 1$.
Also assume that 
\begin{equation}\label{Hyp1}
\sum_{j=1}^{\infty}jR_j(f)^2 \in (0,\infty).
\end{equation}
 Then, 
\begin{enumerate}

\item  under the Ewens distribution $\nu_{N,\theta}$, the distribution of the centered linear statistic 
$$I_{\sigma,N}(f)-\mathbb{E}[I_{\sigma,N}(f)]$$
 converges weakly, as $N$ goes to infinity, to a non-Gaussian infinitely divisible distribution $\mu_{f,\theta}$.
 
 \item The  distribution $\mu_{f,\theta}$ is defined by its Fourier transform 

\begin{equation}\label{mufourier}\widehat{\mu}_{f,\theta}(t)=\exp\left(\theta\int(e^{itx}-1-itx)dM_f(x)\right),\end{equation}
where the L\'{e}vy measure $M_f$ is given by
 \begin{equation}\label{measuremu}
       M_f=\sum_{j=1}^\infty\frac 1j\delta_{jR_j(f)}.
  \end{equation} 
  
  \item The  asymptotic behavior of the expectation of the linear statistic is given by
  \begin{equation}
\E[{I_{\sigma, N}}(f)]= N\int_0^1 f(x)dx+\sum_{j=1}^NR_j(f) + o(1).
\end{equation}
Here, the second term $\sum_{j=1}^NR_j(f)$ may diverge, but not faster than logarithmically.
\begin{equation}
\sum_{j=1}^NR_j(f) = O(\sqrt{\log N})
\end{equation}
  
  \item If, on top of the preceding assumptions, one assumes that the sequence $u(f)^2= (u_j(f)^2)_{j\geq1}$ converges in Cesaro $(C,1\wedge \theta)$ sense, then the asymptotic behavior of the variance of the linear statistic is given by
  \begin{equation}
\Var[{I_{\sigma, N}}(f)]= \theta \sum_{j=1}^N jR_j(f)^2 +o(1).
\end{equation}

  \end{enumerate}
  
\end{theorem}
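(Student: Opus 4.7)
The plan is to exploit the fundamental identity \eqref{RandI}, which reduces the centered statistic to a weighted sum of centered cycle counts:
$$I_{\sigma,N}(f) - \E[I_{\sigma,N}(f)] = \sum_{j=1}^N (\alpha_j(\sigma) - \E[\alpha_j(\sigma)])\, u_j(f), \qquad u_j(f)=jR_j(f).$$
Since $\sum_j u_j(f)^2/j < \infty$ by hypothesis \eqref{Hyp1}, a cutoff strategy is natural: fix a threshold $b$, analyse the head $\sum_{j\leq b}$ via an explicit coupling of the $\alpha_j$'s to independent Poisson variables, and control the tail $\sum_{j>b}$ in $L^2$ uniformly in $N$. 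Letting $b\to\infty$ afterwards will isolate the claimed infinitely divisible limit.

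For the head, I would use the Feller coupling, which realises a $\nu_{N,\theta}$-distributed permutation on the same probability space as an independent Bernoulli sequence $\xi_1,\xi_2,\ldots$ with $\Pro(\xi_i=1)=\theta/(\theta+i-1)$ and couples the cycle counts $\alpha_j$ to spacing statistics $Z_j^{(N)}$ of the $\xi_i$'s whose infinite-$N$ limits $Z_j$ are independent $\mathrm{Poi}(\theta/j)$ variables. Since $\Pro(\alpha_j\neq Z_j \text{ for some } j\leq b)\to 0$ as $N\to\infty$ for each fixed $b$, the head converges in distribution to the compensated Poisson sum $S_b:=\sum_{j=1}^{b}(Z_j-\theta/j)u_j(f)$. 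The series $S_\infty:=\sum_{j=1}^{\infty}(Z_j-\theta/j)u_j(f)$ converges in $L^2$ by independence and \eqref{Hyp1}, and a direct computation of its characteristic function gives exactly $\exp(\theta\int(e^{itx}-1-itx)\,dM_f(x))$ with $M_f$ as in \eqref{measuremu}.

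The main obstacle is the uniform tail estimate
$$\Var\left(\sum_{j=b}^N (\alpha_j-\E\alpha_j)u_j(f)\right) = \sum_{j,k=b}^N u_j(f)u_k(f)\,\Cov(\alpha_j,\alpha_k) \longrightarrow 0$$
as $b,N\to\infty$. For $\theta\geq 1$ the cycle counts have Poisson-like diagonal variance and negative correlations, so the bound essentially reduces to $\sum_{j>b}(\theta/j)u_j(f)^2$ which vanishes by \eqref{Hyp1}. For $\theta<1$ the off-diagonal covariances are more delicate and concentrated on pairs with $j+k$ close to $N$; dominating them produces weighted sums of the form $\sum|u_j(f)||u_k(f)|/(N-j-k)^{1-\theta}$. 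This is exactly where the $(C,\theta)$ Cesaro convergence of $|u_j(f)|$ to zero is used: combined with sharpenings of the Feller-coupling error estimates from \cite{arratiabarbourtavare2,barbourtavare} (to be carried out in Section~4), it yields the required decay. I expect this to be the technical heart of the argument.

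The remaining claims are easier. Non-Gaussianity of $\mu_{f,\theta}$ is immediate because \eqref{Hyp1} forces at least one $R_j(f)\neq 0$, so $M_f$ has a nonzero atom and the limit carries a nontrivial compound-Poisson component. Claim (3) follows from the exact Ewens formula derived in Section~\ref{expectation_and_variance}, together with the Cauchy-Schwarz bound
$$\left|\sum_{j=1}^N R_j(f)\right| \leq \sqrt{\sum_{j=1}^N jR_j(f)^2}\cdot\sqrt{\sum_{j=1}^N 1/j} = O(\sqrt{\log N}).$$
Claim (4), which assumes $(C,1\wedge\theta)$ convergence of $u_j(f)^2$, is obtained by applying the same Cesaro-and-covariance machinery to the exact variance formula of Section~\ref{expectation_and_variance}, isolating the diagonal contribution $\theta\sum jR_j(f)^2$ while showing the cross terms average out.
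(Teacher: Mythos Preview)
Your cutoff strategy is viable, but the paper takes a more direct route that you should be aware of. Rather than splitting into head and tail, the paper couples the \emph{entire} sum $G_N=\sum_{j=1}^N u_j\,C_j(N)$ to $H_N=\sum_{j=1}^N u_j W_j$ via Feller and proves the single $L^1$ estimate $\E|G_N-H_N|\to 0$. The key observation (Lemma~\ref{cesaroandJ}) is that the law of the random variable $J_N$ appearing in the Feller error bound \eqref{estimatedistance} is precisely the Cesaro-$(C,\theta)$ weighting: $\Pro(J_N=j)=(\theta/N)\Psi_N(j)$, whence $\E|G_N-H_N|\leq C(\theta)\bigl(\sigma_N^{1}(|u|)+\sigma_N^{\theta}(|u|)\bigr)$. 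Items (1)--(3) then follow in one stroke: the distributional limit from $H_N-\E H_N\Rightarrow\mu_{f,\theta}$ (an elementary Fourier computation, Lemma~\ref{lk}, not requiring your $L^2$ convergence $S_b\to S_\infty$), and the mean asymptotics from $\E G_N=\E H_N+o(1)=\theta\sum_j R_j+o(1)$. Only for item~(4) does the paper invoke an $L^2$ coupling bound.

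There is a genuine risk in your tail-$L^2$ plan. Expanding $\Var\bigl(\sum_{j>b}(\alpha_j-\E\alpha_j)u_j\bigr)$ via the exact covariances produces, for $\theta<1$, a diagonal contribution of order $\sum_{j>b}(\theta/j)\,u_j^2\,\Psi_N(j)\asymp\sigma_N^{\theta}(u^2\mathbbm{1}_{j>b})$, and making this small uniformly in $N$ essentially calls for $(C,\theta)$ convergence of $u^2$ --- exactly the extra hypothesis reserved for item~(4), not assumed for (1)--(3). The paper's $L^1$ route sidesteps this completely. Similarly, deriving item~(3) from the exact Ewens formula of Section~\ref{expectation_and_variance} is not just the Cauchy--Schwarz bound you wrote: you would still owe the estimate $\sum_{j\leq N}(\Psi_N(j)-1)R_j=o(1)$, which again falls out of $\E|G_N-H_N|\to 0$ but not from the exact formula alone.
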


Theorem~\ref{Thm1general} will be proved in Section 5.

\subsection{The case of unbounded variance, Gaussian limits} \ \newline

We will give here a slightly sharper statement than Theorem~\ref{Thm2} and prove that it implies Theorem~\ref{Thm2}.

\begin{theorem}\label{Thm2general}
Let  $\theta$ be any positive number, and assume that 
\begin{equation}\label{Hyp2}
\sum_{j=1}^{\infty}jR_j(f)^2 = \infty
\end{equation}
and that 
\begin{equation}
\max_{1\leq j \leq N} |jR_j| = o( \eta_N),
\end{equation}
where $\eta_N^2= \theta \sum_{j=1}^{N}jR_j(f)^2 $. Then,
\begin{enumerate}

\item  under the Ewens distribution $\nu_{N,\theta}$, the distribution of the centered and normalized linear statistic 
\begin{equation}
\frac{I_{\sigma,N}(f)-\mathbb{E}[I_{\sigma,N}(f)]}{\sqrt{\Var{I_{\sigma, N}(f)}}}
\end{equation}
 converges weakly, as $N$ goes to infinity, to the Gaussian standard distribution $\mathcal{N}(0,1)$. 
  
  \item The asymptotic behavior of the expectation of the linear statistic is given by
  \begin{equation}
\E[{I_{\sigma, N}}(f)]= N\int_0^1 f(x)dx+\sum_{j=1}^NR_j(f)  + o(\eta_N).
\end{equation}
Here, the second term $\sum_{j=1}^NR_j(f)$ may diverge, but not faster than logarithmically.
 \begin{equation}
\sum_{j=1}^NR_j(f) = o(\eta_N \sqrt{\log N})
\end{equation}
  \item The asymptotic behavior of the variance of the linear statistic is given by
  \begin{equation}
\Var[{I_{\sigma, N}}(f)] \sim \eta_N^2= \theta \sum_{j=1}^N jR_j(f)^2 .
\end{equation}

  \end{enumerate}
  
\end{theorem}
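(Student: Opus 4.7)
The plan is to use identity \eqref{RandI} to rewrite everything in terms of cycle counts and then prove a CLT via Poisson approximation through the Feller coupling. After recentering,
\begin{equation*}
I_{\sigma,N}(f) - \E[I_{\sigma,N}(f)] \;=\; \sum_{j=1}^N jR_j(f)\,\bigl(\alpha_j(\sigma) - \E\alpha_j(\sigma)\bigr),
\end{equation*}
so the problem reduces to a CLT for a weighted sum of cycle-count deviations. Since $(\alpha_j(\sigma))_{j\geq1}$ under $\nu_{N,\theta}$ is well approximated, via the Feller coupling, by an independent family of Poissons $Z_j\sim\mathrm{Poisson}(\theta/j)$, I expect a Gaussian limit to emerge from the independent-Poisson sum, with the hypothesis $\max_{j\leq N}|jR_j|=o(\eta_N)$ playing the Lindeberg role.

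Concretely, I would realize $(Z_j)$ on the same probability space as $\sigma$ via the Feller coupling (with the improvements developed in Section 4), and decompose
\begin{equation*}
\sum_{j=1}^N jR_j(f)\bigl(\alpha_j(\sigma)-\E\alpha_j(\sigma)\bigr) \;=\; S_N + \mathcal{E}_N,\qquad S_N \,:=\, \sum_{j=1}^N jR_j(f)\bigl(Z_j-\tfrac{\theta}{j}\bigr).
\end{equation*}
The characteristic function of $S_N/\eta_N$ is an explicit product of Poisson characteristic functions, and using $|e^{ix}-1-ix+x^2/2|\leq |x|^3/6$ I get
\begin{equation*}
\log\E\bigl[e^{itS_N/\eta_N}\bigr] \;=\; -\frac{t^2}{2\eta_N^2}\sum_{j=1}^N\frac{\theta}{j}(jR_j)^2 + r_N(t) \;=\; -\frac{t^2}{2} + r_N(t),
\end{equation*}
by the definition of $\eta_N$, with
\begin{equation*}
|r_N(t)| \;\leq\; \frac{|t|^3}{6\eta_N^3}\sum_{j=1}^N\frac{\theta}{j}|jR_j|^3 \;\leq\; \frac{|t|^3}{6}\cdot\frac{\max_{j\leq N}|jR_j|}{\eta_N}\cdot\frac{\theta\sum_{j}jR_j^2}{\eta_N^2} \;\longrightarrow\; 0.
\end{equation*}
Hence $S_N/\eta_N\Rightarrow\mathcal{N}(0,1)$.

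The main obstacle will be showing $\mathcal{E}_N=o_{\Pro}(\eta_N)$. The classical Arratia--Barbour--Tavar\'{e} estimates for the Feller coupling control $(\alpha_1,\dots,\alpha_b)$ against $(Z_1,\dots,Z_b)$ only up to $b$ much smaller than $N$, while the weights $jR_j$ here need not be concentrated on small $j$. I would therefore have to sharpen the coupling to an $L^2$-type bound weighted by $(jR_j)^2$, and handle the regime $j$ close to $N$ separately, where the deviations of $\E[\alpha_j]$ and $\Var[\alpha_j]$ from $\theta/j$ concentrate; the Cesaro-mean machinery of Section 2.1 is tailored to make this residual $o(\eta_N)$ uniformly in $\theta>0$. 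Once this is in hand, (1) follows by Slutsky. For (2) and (3) I would substitute the explicit finite-$N$ formulas for $\E[\alpha_j]$ and $\Var[\alpha_j]$ under $\nu_{N,\theta}$ (derived in Section 7) directly into \eqref{RandI}, bound the residual contributions by Cauchy--Schwarz against $\sum jR_j^2$, and use the standard trapezoidal estimate $|R_j|\leq V(f)/j$ for $f$ of bounded variation to obtain the logarithmic bound $\sum_{j=1}^N R_j(f)=O(\log N)$.
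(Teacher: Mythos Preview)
Your overall strategy matches the paper's: reduce via \eqref{RandI} to cycle counts, couple to independent Poissons $W_j\sim\mathrm{Poisson}(\theta/j)$ via Feller, prove a CLT for the Poisson sum $H_N=\sum u_jW_j$, and show the coupling error is negligible. Your direct third-moment Taylor bound on the characteristic function of $S_N/\eta_N$ is a perfectly good alternative to the paper's route, which packages the same computation as convergence of the L\'evy--Khintchine triple $(a_N,M_N,0)\to(0,0,1)$.

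Where you diverge from the paper is in the treatment of the coupling error $\mathcal{E}_N$, and you are making it harder than it is. The paper does \emph{not} invoke the Cesaro machinery for this theorem at all; that is reserved for Theorem~\ref{Thm1general}. Here the crude bounds of Lemmas~\ref{L1boundwithmax} and \ref{L2boundwithmax} already give
\[
\E|G_N-H_N|\le C(\theta)\max_{j\le N}|u_j|=o(\eta_N),\qquad
\E\bigl[(G_N-H_N)^2\bigr]\le C(\theta)\max_{j\le N}|u_j|^2=o(\eta_N^2),
\]
directly from the hypothesis $\max_j|jR_j|=o(\eta_N)$. No separate treatment of $j$ near $N$, no weighted $L^2$ refinement, no Cesaro means are needed. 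This immediately yields (1) by Slutsky, and (3) from $|\sqrt{\Var G_N}-\sqrt{\Var H_N}|=o(\eta_N)$.

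One small slip: in part (2) you invoke $|R_j|\le V(f)/j$ for $f$ of bounded variation, but Theorem~\ref{Thm2general} does not assume bounded variation (that is Theorem~\ref{Thm2}). The bound $\sum_{j\le N}R_j=O(\eta_N\sqrt{\log N})$ follows instead from Cauchy--Schwarz: $\bigl|\sum_{j\le N}R_j\bigr|=\bigl|\sum_{j\le N}u_j/j\bigr|\le(\sum_j u_j^2/j)^{1/2}(\sum_j 1/j)^{1/2}=\theta^{-1/2}\eta_N\cdot O(\sqrt{\log N})$.
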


This theorem will be proved in Section 6.

\section{Estimates on the trapezoidal rule and proofs of the Corollaries \ref{cor1}, \ref{cor2} and \ref{cor3}}

In this section, we will discuss known results about the quality of the composite trapezoidal approximation for periodic functions, in order to relate the decay of the $R_j(f)$'s to the regularity of $f$. Moreover, we will give proofs of Corollary~\ref{cor1}, Corollary~\ref{cor2}, Corollary~\ref{cor3}. \\

\subsection{Jackson-type estimates on the composite trapezoidal approximation} \ \newline

In order to relate the decay of the $R_j(f)$'s to the regularity of $f$, we can use two related approaches. First, we can control directly the size of the $R_j$'s by Jackson type inequalities as in \cite{buttgenbach}, \cite{uribe} or \cite{rahman}. Or we may use the Poisson summation formula given in \eqref{poissonsummation} and use the decay of the Fourier coefficients of $f$. \\\\
We start by using the first approach, and recall known Jackson-type estimates of the error in the trapezoidal approximation.
\begin{lemma}\label{Jackson}

\begin{enumerate}
\item[(i)] There exists a constant $C\leq 179/180$ such that
\begin{equation}\label{omega2}
|R_j(f)|\leq C\omega_2(f,1/2j),
\end{equation}
where 
\begin{equation}
\omega_2(f,\delta)=\sup_{|h|\leq\delta, x\in[0,1]}|f(x+2h)-2f(x+h)+f(x)|.
\end{equation}

\item[(ii)]  If the function $f$ is in $C^1$, then
\begin{equation}\label{omega1}
|R_j(f)|\leq C \frac{\omega(f',1/j)}{2j}.
\end{equation}

\item[(iii)] If the function $f$ is in $W^{1,p}$, then
\begin{equation}
|R_j(f)|\leq C\omega^{(p)}(f',1/j)\frac 1{j^{1-1/p}}.
\end{equation}
\end{enumerate}
\begin{proof}
The first item is well known, see (see \cite{buttgenbach}).

The second item is a consequence of the first, since by the Mean Value Theorem
\begin{equation}
\omega_2(f,\delta) \leq \delta \omega(f',2\delta).
\end{equation}

The third item is also an easy consequence of the first since
\begin{equation}
f(x+2h)-2f(x+h)+f(x) = \int_x^{x+h} (f'(t+h)-f'(t))dt.
\end{equation}
So that
\begin{equation}
|f(x+2h)-2f(x+h)+f(x)| \leq \left(\int_0^1|f'(t+h)-f'(t)|^p dt \right)^{\frac{1}{p}} h^{\frac{p-1}{p}},
\end{equation}
which shows that
\begin{equation}
\omega_2(f,\delta)\leq\omega^{(p)}(f', \delta)\delta^{1-1/p}.
\end{equation}
\end{proof}

\end{lemma}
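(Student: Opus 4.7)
The plan is to quote (i) as a known Jackson-type estimate and then deduce (ii) and (iii) from it by elementary manipulations of the second difference $\Delta_h^2 f(x) := f(x+2h) - 2f(x+h) + f(x)$. For (i), the bound $|R_j(f)| \leq C\,\omega_2(f, 1/(2j))$ with $C \leq 179/180$ is a classical sharp estimate for the composite trapezoidal rule on periodic functions, proved in \cite{buttgenbach}; I would simply cite it, since reproducing the sharp constant requires a delicate kernel analysis that is not needed elsewhere in the paper.

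For (ii), assume $f \in C^1$. Write
\begin{equation*}
\Delta_h^2 f(x) = \bigl[f(x+2h)-f(x+h)\bigr] - \bigl[f(x+h)-f(x)\bigr] = \int_x^{x+h}\bigl(f'(t+h)-f'(t)\bigr)\,dt,
\end{equation*}
so that $|\Delta_h^2 f(x)| \leq h\,\omega(f',h)$. Taking the supremum over $|h|\leq \delta$ yields $\omega_2(f,\delta) \leq \delta\,\omega(f',2\delta)$. Plugging $\delta = 1/(2j)$ into (i) then gives the claim, with a harmless change of constant (since $\omega(f', 1/j)$ is what appears on the right-hand side, which matches).

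For (iii), assume $f' \in L^p$ and apply H\"older's inequality with conjugate exponents $p$ and $p/(p-1)$ to the same integral representation:
\begin{equation*}
|\Delta_h^2 f(x)| \leq h^{(p-1)/p}\left(\int_x^{x+h}|f'(t+h)-f'(t)|^p\,dt\right)^{1/p} \leq h^{1-1/p}\,\omega^{(p)}(f',h).
\end{equation*}
Hence $\omega_2(f,\delta) \leq \delta^{1-1/p}\,\omega^{(p)}(f',\delta)$, and substituting $\delta=1/(2j)$ into (i) yields $|R_j(f)| \leq C\,\omega^{(p)}(f',1/j)\,j^{-(1-1/p)}$, using monotonicity of $\omega^{(p)}(f',\cdot)$ to replace $1/(2j)$ by $1/j$ if desired.

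There is no genuine obstacle here once (i) is granted: the only care needed is in bookkeeping the factors of $2$ inside the moduli of continuity and absorbing them into the constant $C$. The real content of the lemma is (i), which I would treat as a black-box citation; (ii) and (iii) are then one-line consequences of the telescoping identity for $\Delta_h^2 f$ combined with, respectively, the mean value theorem and H\"older's inequality.
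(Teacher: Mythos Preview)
Your proposal is correct and follows essentially the same approach as the paper: cite (i) from \cite{buttgenbach}, then derive (ii) and (iii) by bounding $\omega_2(f,\delta)$ via the telescoping identity $\Delta_h^2 f(x)=\int_x^{x+h}(f'(t+h)-f'(t))\,dt$, using the uniform modulus of continuity for (ii) and H\"older's inequality for (iii). The only cosmetic difference is that the paper phrases the bound in (ii) as an application of the Mean Value Theorem rather than writing out the integral, but the content is identical.
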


\subsection{Proofs of Corollary \ref{cor1} and \ref{cor2} using Jackson bounds }
\begin{proof}[Proof of Corollary~\ref{cor1}]
We can  control the decay of the $R_j(f)$'s using the item (ii) of Lemma \ref{Jackson}, which implies that
\begin{equation}
j R_j(f)^2 \leq  C^2 \frac{\omega(f',1/j)^2}{4j}
\end{equation}
It is then clear that under the assumption \eqref{omegacond}, the series $\sum_{j=1}^\infty jR_j(f)^2$ is convergent.
But \eqref{omegacond} implies that the Fourier series of $f$ is absolutely convergent, so by the result mentioned above (\cite{loxtonsanders}, p. 260) it is true that \eqref{A6} implies that one of the $R_j(f)$'s is non zero. And thus, $\sum_{j=1}^\infty jR_j(f)^2 \in (0,\infty)$. If we add that $f$ is obviously of bounded variation, we have then checked the assumptions of Theorem~\ref{Thm} and thus, proved Corollary~\ref{cor1}.
\end{proof}

\begin{proof}[Proof of Corollary~\ref{cor2}]
We can  here control the decay of the $R_j(f)$'s using the item (iii) of Lemma \ref{Jackson}, and the assumption \eqref{cor2omega}, which imply that
\begin{equation}
|R_j(f)|\leq C\frac 1{j^{1+\alpha-1/p}}.
\end{equation}
So, if $\alpha>1/p$, the series $\sum_{j=1}^\infty jR_j(f)^2$ is convergent, since
\begin{equation}
jR_j^2(f)\leq\frac C{j^{1+2(\alpha-1/p)}}.
\end{equation}
Moreover, as above, it is easy to see that \eqref{cor2omega} implies that the Fourier series of $f$ is absolutely convergent, so by the result mentioned above (\cite{loxtonsanders}, p. 260) it is true that \eqref{A6} implies that one of the $R_j(f)$'s is non zero. Again, $f$ is obviously of bounded variation, we have then checked the assumptions of Theorem \ref{Thm} and thus, proved Corollary~\ref{cor2}.
\end{proof}

Remark: It is in fact true that $\lim_{j\rightarrow\infty} jR_j(f)=0$ is satisfied as soon as $f\in W^{1,p}$ (see \cite{uribe}).

\subsection{Proofs of Corollary \ref{cor3} and the Poisson summation formula} \ \newline

We now turn to the proof of Corollary~\ref{cor3}, using the second possible approach, i.e. the Poisson Summation Formula, \eqref{poissonsummation}.

\begin{proof}[Proof of Corollary~\ref{cor3}]
Let $f$ be in $H^s$, $s>1$ and consider its Fourier series
\begin{equation}
f(x)=a_0+\sum_{n=1}^\infty a_n\cos(n2\pi i x)+\sum_{n=1}^\infty b_n\sin(n2\pi i x).
\end{equation}
Then there exists a sequence  $(c_k)_{k\geq1}\in\ell^2$ such that
\begin{equation}
a_k=\frac{c_k}{k^s}.
\end{equation}
So,
\begin{equation}
C_j:=\sum_{\ell\geq1}\frac{c_{j\ell}}{\ell^s}
\end{equation}
 is in $\ell^2$ by Lemma 4 of \cite{rahman}, p. 131. Thus, using the Poisson summation formula \eqref{poissonsummation},
\begin{equation}
R_j(f)=\frac {C_j}{j^s},
\end{equation}
which is more than enough to prove that the series $\sum_{j=1}^\infty jR_j(f)^2$ is convergent.
Moreover, as above, it is easy to see that \eqref{A6} implies  that one of the $R_j(f)$'s is non zero, and that $f$ is obviously of bounded variation. We have then checked the assumptions of Theorem~\ref{Thm} and thus, proved Corollary~\ref{cor3}.
\end{proof}

\section{Bounds on the Feller coupling and Cesaro Means}

\subsection{The Feller Coupling} \ \newline

Let $\sigma\in\mathcal{S}_N$ be a given permutation and $\alpha_j(\sigma)$ be the number of $j$-cycles of $\sigma$. A classical result is that under the Ewens distribution $\nu_{N,\theta}$, the joint distribution of $(\alpha_1(\sigma),\dots,\alpha_N(\sigma))$ is given by 

\begin{equation}
\nu_{N,\theta}[(\alpha_1(\sigma),\dots,\alpha_N(\sigma))=(a_1,\dots,a_N)]=\mathbbm{1}_{\sum_{j=1}^Nja_j=N}\frac {N!}{\theta_{(N)}}\prod_{j=1}^N\left(\frac{\theta}{j}\right)^{a_j}\frac 1{a_j!}, 
\end{equation}
where $\theta_{(N)}=\theta(\theta+1)\dots(\theta+N-1)$.
\newline

We recall now the definition and some properties of the Feller coupling, a very useful tool to study the asymptotic behavior of $\alpha_j(\sigma)$ (see for example \cite{arratiabarbourtavare2}, p. 523). \\ Consider a probability space $(\Omega, \mathcal{F}, \mathbb{P})$ and a sequence $(\xi_i)_{i\geq 1}$ of independent Bernoulli random variables defined on $(\Omega, \mathcal{F})$ such that

$$\mathbb{P}[\xi_i=1]=\frac \theta{\theta+i-1}\quad\textrm{and}\quad\mathbb{P}[\xi_i=0]=\frac{i-1}{\theta+i-1}.$$ 
For $1\leq j\leq N$, denote the number of spacings of length $j$ in the sequence $1\xi_2\cdots \xi_N 1$ by $C_j(N)$, i.e.

\begin{equation}\label{C}
C_j(N)=\sum_{i=1}^{N-j}\xi_i(1-\xi_{i+1})\dots(1-\xi_{i+j-1})\xi_{i+j}+\xi_{N-j+1}(1-\xi_{N-j+2})\dots(1-\xi_N).
\end{equation}

Define $(W_{jm})_{j\geq1}$ by 

\begin{equation}\label{W}
W_{jm}=\sum_{i=m+1}^\infty\xi_i(1-\xi_{i+1})\dots(1-\xi_{i+j-1})\xi_{i+j}
\end{equation}
and set for $j\geq1$,
\begin{equation}
W_j:=W_{j0}.
\end{equation}

Define 
\begin{equation}J_N=\min\{j\geq1: \xi_{N-j+1}=1\}\end{equation}
and
\begin{equation}K_N=\min\{j\geq1:\xi_{N+j}=1\}.\end{equation}

With the notations above, we state the following result of \cite{barbourtavare}, p.169:\\\\
\begin{theorem}\label{theorembarbourtavare}  
Under the Ewens distribution $\nu_{N,\theta}$,
\begin{enumerate}
\item[(i)]$(C_j(N))_{1\leq j\leq N}$ has the same distribution as $(\alpha_j(\sigma))_{1\leq j\leq N}$ , i.e. for any $a=(a_1,\dots,a_N)\in\mathbb{N}^N$,
\begin{equation}
\mathbb{P}[(C_1(N),\dots,C_N(N)=a]=\nu_{N,\theta}[(\alpha_1(\sigma),\dots,\alpha_N(\sigma)=a],
\end{equation}

\item[(ii)] $(W_j)_{1\leq j\leq N}$ are independent Poisson random variables with mean $\theta/j$,
\item[(iii)] and
\begin{equation}\label{estimatedistance}
     |C_j(N)-W_j|\leq W_{jN}+\mathbbm{1}_{\{J_N+K_N=j+1\}}+\mathbbm{1}_{\{J_N=j\}}.
\end{equation}
\end{enumerate}

\end{theorem}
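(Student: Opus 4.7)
The three assertions will be proved in turn, with (i) and (iii) amounting to combinatorial bookkeeping and (ii) carrying the analytic content.

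For (i), I would realize the Ewens permutation by a Chinese-restaurant-style sequential construction. One builds $\sigma\in\mathcal{S}_N$ by inserting elements one at a time: element $i$ either opens a new cycle (with probability $\theta/(\theta+i-1)$) or is inserted immediately after a uniformly chosen already-placed element (with total probability $(i-1)/(\theta+i-1)$). A direct check against the Ewens formula shows that the resulting permutation has law $\nu_{N,\theta}$ and that the ``open a new cycle'' indicators are independent Bernoullis with exactly the Feller probabilities. Identifying these indicators with $\xi_1=1,\xi_2,\dots,\xi_N$, the cycle lengths of $\sigma$ are the gaps between consecutive $1$'s in the extended string $1\xi_2\cdots\xi_N 1$, the trailing $1$ closing off the cycle containing $N$. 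Hence $C_j(N)$ equals $\alpha_j(\sigma)$ in distribution.

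For (iii), I would split the infinite sum defining $W_j$ by start-index into three ranges. Indices $1\le i\le N-j$ reproduce the main sum in $C_j(N)$; indices $i\ge N+1$ constitute $W_{jN}$ by definition; indices $i\in[N-j+1,N]$ contribute a $\{0,1\}$-valued boundary term, since two contributing indices $i<i'$ in this range would force $i'\ge i+j>N$. This boundary indicator equals $1$ iff the last $1$ in $\xi_1,\dots,\xi_N$, at position $N-J_N+1$, and the first $1$ in $\xi_{N+1},\xi_{N+2},\dots$, at position $N+K_N$, are exactly $j$ apart, i.e.\ $J_N+K_N=j+1$. The extra tail term in $C_j(N)$ is nonzero iff $J_N=j$, corresponding to the appended $1$ at $N+1$ closing a $j$-cycle. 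Assembling,
\begin{equation*}
C_j(N)-W_j \;=\; \indi\{J_N=j\}-\indi\{J_N+K_N=j+1\}-W_{jN},
\end{equation*}
and the triangle inequality yields \eqref{estimatedistance}.

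Part (ii) is the substantive point. The mean $\E[W_j]=\theta/j$ follows by a somewhat miraculous telescoping of
\begin{equation*}
\E[W_j]=\sum_{i=1}^{\infty}\frac{\theta}{\theta+i-1}\cdot\prod_{k=1}^{j-1}\frac{i+k-1}{\theta+i+k-1}\cdot\frac{\theta}{\theta+i+j-1},
\end{equation*}
which one rearranges by partial fractions. To upgrade to joint independence and the Poisson law, I would compute joint factorial moments of $(W_{j_1},\dots,W_{j_m})$ for distinct lengths $j_1,\dots,j_m$ and show they factor as $\prod_l (\theta/j_l)^{k_l}$; since Poisson laws are characterized by their factorial moments, and independent Poissons have factoring factorial moments, this settles the joint distribution. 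Combinatorially, $\prod_l (W_{j_l})_{k_l}$ expands as a sum over ordered placements of $k_1+\dots+k_m$ disjoint spacings of the prescribed lengths along $\mathbb{N}$; the non-overlap is automatic from the $\xi$-structure, and iterated application of the single-length telescoping identity produces the product. The main obstacle is exactly this joint moment calculation: extending the single-length telescoping to disjoint spacings across different lengths $j_l$ without double-counting requires delicate bookkeeping, and the clean product form is the combinatorial magic underlying the Feller coupling.
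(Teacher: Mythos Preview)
The paper does not prove this theorem at all; it is quoted verbatim as ``the following result of \cite{barbourtavare}, p.~169'' and used as a black box. So there is no paper proof to compare against, only the original sources (Arratia--Barbour--Tavar\'e \cite{arratiabarbourtavare2} and Barbour--Tavar\'e \cite{barbourtavare}).

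That said, your sketch is sound. Part (i) via the Chinese-restaurant construction is the standard argument and is correct. Your treatment of (iii) is in fact sharper than what is stated: the exact identity
\[
C_j(N)-W_j = \indi\{J_N=j\}-\indi\{J_N+K_N=j+1\}-W_{jN}
\]
is correct (your three-range decomposition of $W_j$ and the identification of the boundary term with $\indi\{J_N+K_N=j+1\}$ are both right), and \eqref{estimatedistance} follows at once.

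For (ii), the factorial-moment route you outline does work, but it is not the path of least resistance. A cleaner argument, closer to how the original references proceed, is to first establish (i), then note that for each fixed $b$ one has $(C_1(N),\dots,C_b(N))\to (W_1,\dots,W_b)$ almost surely as $N\to\infty$ (since for $N$ large the boundary corrections vanish for small $j$), and finally read off the limiting joint law of $(\alpha_1,\dots,\alpha_b)$ under $\nu_{N,\theta}$ directly from the Ewens sampling formula or from Watterson's factorial-moment identity \eqref{generalformula}; that limit is precisely independent Poisson$(\theta/j)$. This sidesteps the ``delicate bookkeeping'' you flag, replacing the infinite-sequence telescoping by a finite-$N$ computation already available in closed form.
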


We will need to improve on the known results for the Feller coupling.
In particular we will need the following. For any sequence of real numbers $(u_j)_{j\geq1}$, define
\begin{equation}
G_N= \sum_{j=1}^N u_jC_j(N)  
\end{equation} 
and
\begin{equation}
H_N =\sum_{j=1}^N u_j W_j.
\end{equation}
We will need to control the $L^1$ and $L^2$-distances between the random variables $G_N$ and $H_N$.
In order to prove Theorem~\ref{Thm} and Theorem~\ref{Thm2}, we will apply these estimates to the case where
the sequence $u_j$ is chosen to be $u_j(f)= jR_j(f)$.

\subsection{$L^1$ bounds on the Feller Coupling} \ \newline

We begin with the control of the $L^1$-distance in this subsection.
We first state our result in a very simple (but not optimal) shape.

\begin{lemma}\label{L1boundwithmax}
For every $\theta>0$, there exists a constant $C(\theta)$ such that, for every integer $N$,
\begin{equation}
\mathbb{E}(|G_N-H_N|) \leq C(\theta) \max_{1\leq j\leq N} |u_j| .
\end{equation}
\end{lemma}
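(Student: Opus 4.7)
My plan is to combine the triangle inequality with part (iii) of Theorem~\ref{theorembarbourtavare}. Pulling $\max_{1\leq j\leq N}|u_j|$ outside the sum and then applying the coupling estimate yields
\begin{equation*}
|G_N - H_N| \leq \Bigl(\max_{1\leq j\leq N}|u_j|\Bigr)\left(\sum_{j=1}^N W_{jN} + \sum_{j=1}^N \mathbbm{1}_{\{J_N + K_N = j+1\}} + \sum_{j=1}^N \mathbbm{1}_{\{J_N = j\}}\right).
\end{equation*}
The two indicator sums are each almost surely at most $1$, because $J_N$ and $J_N + K_N$ each take a unique value per realization, and moreover $\xi_1 = 1$ a.s.\ (since $\mathbb{P}[\xi_1 = 1] = \theta/\theta = 1$), which forces $J_N \leq N$ and hence even makes $\sum_{j=1}^N \mathbbm{1}_{\{J_N = j\}} = 1$ identically. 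So the problem reduces to establishing a uniform bound $\mathbb{E}\bigl[\sum_{j=1}^N W_{jN}\bigr] \leq C(\theta)$.

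\textbf{Main step.} The naive estimate $W_{jN}\leq W_j$ together with $\mathbb{E}[W_j]=\theta/j$ only gives $O(\log N)$, so one must exploit the tail cutoff $i\geq N+1$ in the definition of $W_{jN}$. I would compute $\mathbb{E}[W_{jN}]$ directly from independence of the $\xi_i$, which in Pochhammer notation $(a)_n:=a(a+1)\cdots(a+n-1)$ reads
\begin{equation*}
\mathbb{E}[W_{jN}] = \theta^2\sum_{i=N+1}^\infty \frac{(i)_{j-1}}{(\theta+i-1)_{j+1}},
\end{equation*}
and telescope the sum over $j=1,\dots,N$ using the elementary algebraic identity
\begin{equation*}
\theta\cdot\frac{(i)_{j-1}}{(\theta+i-1)_{j+1}} = \frac{(i)_{j-1}}{(\theta+i-1)_j} - \frac{(i)_j}{(\theta+i-1)_{j+1}}.
\end{equation*}
The telescope collapses the inner sum to a single-index quantity
\begin{equation*}
\sum_{j=1}^N\mathbb{E}[W_{jN}] = \theta\sum_{i=N+1}^\infty \frac{1}{\theta+i-1}\left(1 - \prod_{k=0}^{N-1}\frac{i+k}{\theta+i+k}\right),
\end{equation*}
which I would then bound by the product inequality $1-\prod_k x_k \leq \sum_k(1-x_k)$ and the comparison $\sum_{k=0}^{N-1}1/(i+k)\leq \log(1+N/(i-1))\leq N/(i-1)$. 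Summation against $\sum_{i\geq N+1}(i-1)^{-2}=O(1/N)$ produces a constant bound of order $\theta^2$, independent of $N$.

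\textbf{Main obstacle.} The delicate point is precisely this uniform-in-$N$ bound on $\mathbb{E}[\sum_{j=1}^N W_{jN}]$: the straightforward estimate $W_{jN} \leq W_j \sim \mathrm{Poisson}(\theta/j)$ loses a factor $\log N$. Both the tail restriction $i \geq N+1$ (which gives a $1/i^2$ tail) and the telescoping in $j$ (which recovers the shortened product $(i)_N/(\theta+i)_N$ from a double sum) must be used together in order to gain back this missing logarithm and end with a constant depending only on~$\theta$.
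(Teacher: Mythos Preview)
Your argument is correct. The structure---pull out $\max_j|u_j|$ after the coupling inequality (iii) of Theorem~\ref{theorembarbourtavare}, observe that each indicator sum is pathwise at most $1$, and then bound $\sum_{j=1}^N\mathbb{E}[W_{jN}]$ uniformly in $N$---gives exactly the claim.

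The paper's route differs mainly in how the $W_{jN}$ piece is handled, and is in fact simpler there. Rather than computing $\mathbb{E}[W_{jN}]$ exactly and telescoping in $j$, the paper drops the intermediate $(1-\xi)$ factors in $U_i^{(j)}$ and bounds
\[
\mathbb{E}\bigl[U_i^{(j)}\bigr]\ \le\ \mathbb{E}[\xi_i]\,\mathbb{E}[\xi_{i+j}]\ \le\ \frac{\theta^2}{(i-1)^2},
\]
which is already uniform in $j$; summing over $i\ge N+1$ gives $\mathbb{E}[W_{jN}]\le \theta^2/(N-1)$ for every $j$, hence $\sum_{j=1}^N\mathbb{E}[W_{jN}]\le N\theta^2/(N-1)$ with no telescoping needed. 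So the ``main obstacle'' you flag is real if one starts from $W_{jN}\le W_j$, but it dissolves once one keeps the two endpoint factors $\xi_i\xi_{i+j}$ and discards only the middle ones. Conversely, your treatment of the two indicator sums (pathwise $\le 1$) is cleaner than what the paper does, which instead computes $\mathbb{P}[J_N=j]=\tfrac{\theta}{N}\Psi_N(j)$ and bounds $\mathbb{P}[J_N+K_N=j+1]\le \theta/N$ term by term---information that is needed anyway for the sharper Lemma~\ref{asymplk1} and the Cesaro formulation, but is overkill for the present statement.
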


This result is a trivial consequence of a deeper result, that we now give after introducing some needed notations.
We recall that for any real number $x$ and integer $k$,
\begin{equation}
\binom{x}{k}=\frac{x(x-1)\dots(x-k+1)}{k!}\quad.
\end{equation}
We now define for any $\theta>0$ and every $1\leq j\leq N$, 
\begin{equation}\label{definition_psi}
\Psi_N(j):=\binom{N-j+\gamma}{N-j}\binom{N+\gamma}{N}^{-1}=\prod_{k=0}^{j-1}\frac{N-k}{\theta+N-k-1},
\end{equation}
where $\gamma = \theta -1$.

We then have: 

\begin{lemma}\label{asymplk1}
\begin{equation}
\mathbb{E}|G_N-H_N| \leq  \frac{C(\theta)}{N} \sum_{j=1}^N |u_j|+ \frac{\theta}{N} \sum_{j=1}^N |u_j|\Psi_N(j) 
\end{equation}
\end{lemma}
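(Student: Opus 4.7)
The plan is to start from the pointwise Feller coupling estimate (\ref{estimatedistance}): multiplying by $|u_j|$ and summing gives
\begin{equation*}
|G_N-H_N|\leq \sum_{j=1}^N|u_j|\,W_{jN}+\sum_{j=1}^N|u_j|\,\mathbbm{1}_{\{J_N+K_N=j+1\}}+\sum_{j=1}^N|u_j|\,\mathbbm{1}_{\{J_N=j\}}.
\end{equation*}
Taking expectations reduces the proof to separately controlling $\sum_j|u_j|\mathbb{E}[W_{jN}]$, $\sum_j|u_j|\mathbb{P}[J_N+K_N=j+1]$ and $\sum_j|u_j|\mathbb{P}[J_N=j]$. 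The idea is that the last of these produces the precise $\Psi_N$ term on the right-hand side of the claim, while the first two contribute only to the cruder $C(\theta)\sum_j|u_j|/N$ term.

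For $\mathbb{P}[J_N=j]$, describing the event $\{J_N=j\}=\{\xi_{N-j+1}=1,\ \xi_{N-j+2}=\cdots=\xi_N=0\}$ as a cylinder of the independent Bernoulli sequence yields an explicit product formula. Comparing this product termwise with the explicit product form of $\Psi_N(j)$ in (\ref{definition_psi}), cancellation collapses the two expressions into the identity $\mathbb{P}[J_N=j]=(\theta/N)\,\Psi_N(j)$, which is exactly the second term in the claim.

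For the two remaining sums, the plan is to establish uniform bounds $\mathbb{E}[W_{jN}]\leq C(\theta)/N$ and $\mathbb{P}[J_N+K_N=j+1]\leq C(\theta)/N$ for all $1\leq j\leq N$, after which the contribution $(C(\theta)/N)\sum_j|u_j|$ follows immediately. Expanding the definition (\ref{W}) of $W_{jN}$ gives an expression of the form
\begin{equation*}
\mathbb{E}[W_{jN}]=\theta^2\sum_{i=N+1}^\infty \frac{(i)_{j-1}}{(\theta+i-1)_{j+1}};
\end{equation*}
isolating the two extra factors in the denominator and telescoping the resulting elementary sum leaves a uniform $1/N$ bound, up to a $\theta$-dependent constant. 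The variable $K_N$ admits an entirely analogous product expression, from which $\mathbb{P}[K_N=b]\leq C(\theta)/N$ uniformly in $b\leq N$, and then the convolution
\begin{equation*}
\mathbb{P}[J_N+K_N=j+1]=\sum_{a=1}^{j}\mathbb{P}[J_N=a]\,\mathbb{P}[K_N=j+1-a]\leq \frac{C(\theta)}{N}\sum_{a}\mathbb{P}[J_N=a]\leq \frac{C(\theta)}{N}
\end{equation*}
follows from the independence of $J_N$ and $K_N$ and the fact that the marginal probabilities of $J_N$ sum to at most~$1$.

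The main obstacle is the subcritical regime $\theta<1$, where the Pochhammer ratios of the form $(i+l)/(\theta+i+l-1)$ all exceed $1$, so that the naive termwise bounds valid when $\theta\geq 1$ break down. The fix is a standard sub-polynomial estimate on products of the form $\prod_{l}(1+(1-\theta)/(\theta+i+l-1))$, which one controls by $((i+j)/i)^{1-\theta}\leq 2^{1-\theta}$ whenever $i\geq N+1$ and $j\leq N$. This supplies the required $\theta$-dependent constant in both the bound on $\mathbb{E}[W_{jN}]$ and the bound on $\mathbb{P}[K_N=b]$, and assembling the three contributions completes the proof.
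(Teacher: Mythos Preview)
Your proof is correct and follows essentially the same decomposition as the paper: the three terms from (\ref{estimatedistance}) are handled by the identity $\mathbb{P}[J_N=j]=(\theta/N)\Psi_N(j)$ together with uniform $O(1/N)$ bounds on $\mathbb{E}[W_{jN}]$ and $\mathbb{P}[J_N+K_N=j+1]$. Your ``main obstacle'' for $\theta<1$ is self-inflicted, however: the paper simply bounds $\mathbb{E}[U_i^{(j)}]\leq \mathbb{E}[\xi_i]\,\mathbb{E}[\xi_{i+j}]\leq \theta^2/(i-1)^2$ by dropping the intermediate factors $\mathbb{E}[1-\xi_k]\leq 1$, and likewise each factor $(N+k)/(\theta+N+k)<1$ in the product for $\mathbb{P}[K_N=b]$ is already at most~$1$ for every $\theta>0$, so no sub-polynomial estimate or case distinction is needed.
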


Lemma \ref{asymplk1} is obviously a direct consequence of the following:
 \begin{lemma}\label{asymplk}
Let $\theta>0$,  there exists a constant $C(\theta)$, such that, for every $1\leq j\leq N$
\begin{equation}
\mathbb{E}|C_j(N)-W_j| \leq  \frac{C(\theta)}{N} + \frac{\theta}{N} \Psi_N(j) 
\end{equation}
\end{lemma}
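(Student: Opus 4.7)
The plan is to apply the coupling bound of Theorem~\ref{theorembarbourtavare}~(iii),
$$|C_j(N) - W_j| \leq W_{jN} + \mathbbm{1}_{\{J_N + K_N = j+1\}} + \mathbbm{1}_{\{J_N = j\}},$$
take expectations, and estimate the three resulting terms separately. The third term will turn out to be exactly $\frac{\theta}{N}\Psi_N(j)$, reproducing the second term of the claimed bound, while the first two will each be shown to be $O(1/N)$, absorbed into the constant $C(\theta)$.

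For $\mathbb{P}(J_N=j)$, the event is precisely $\{\xi_{N-j+1}=1,\ \xi_{N-j+2}=\dots=\xi_N=0\}$, so by independence of the $\xi_i$'s,
$$\mathbb{P}(J_N = j) = \frac{\theta}{\theta+N-j}\prod_{i=N-j+2}^{N}\frac{i-1}{\theta+i-1},$$
and a direct rearrangement using \eqref{definition_psi} shows this equals $\frac{\theta}{N}\Psi_N(j)$.

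For $\mathbb{E} W_{jN}$, expanding the definition and using independence,
$$\mathbb{E} W_{jN} = \sum_{i=N+1}^{\infty}\frac{\theta^2}{(\theta+i-1)(\theta+i+j-1)}\prod_{k=1}^{j-1}\frac{i+k-1}{\theta+i+k-1} \leq \theta^2\sum_{i=N+1}^{\infty}\frac{1}{(\theta+i-1)(\theta+i+j-1)},$$
and the partial-fraction identity $\frac{1}{(\theta+i-1)(\theta+i+j-1)} = \frac{1}{j}\bigl(\frac{1}{\theta+i-1}-\frac{1}{\theta+i+j-1}\bigr)$ makes the series telescope down to $\frac{\theta^2}{j}\sum_{i=N+1}^{N+j}\frac{1}{\theta+i-1} \leq \frac{\theta^2}{\theta+N}$, which is $O(1/N)$.

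The main (modest) technical obstacle is the bound on $\mathbb{P}(J_N + K_N = j+1)$. Decomposing the event by the position $a=N-J_N+1 \in \{N-j+1,\dots,N\}$ of the last~$1$ among $\xi_1,\dots,\xi_N$ and using independence,
$$\mathbb{P}(J_N + K_N = j+1) = \sum_{a=N-j+1}^N \frac{\theta^2}{(\theta+a-1)(\theta+a+j-1)}\prod_{m=a}^{a+j-2}\frac{m}{\theta+m}.$$
Introducing the auxiliary function $h(n,j) := \prod_{k=0}^{j-1}\frac{n+k}{\theta+n+k}$, an elementary algebraic verification (reducing to the identity $(\theta+n)(n+j) - n(\theta+n+j) = \theta j$) shows that the summand at $a = n+1$ equals $\frac{\theta}{j}\bigl(h(n+1,j) - h(n,j)\bigr)$, so the whole sum telescopes to
$$\mathbb{P}(J_N + K_N = j+1) = \frac{\theta}{j}\bigl(h(N,j) - h(N-j,j)\bigr).$$
Since $0 \leq h \leq 1$, this is at most $\theta/j$, yielding the bound $O(1/N)$ whenever $j \geq N/2$. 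For the remaining case $j < N/2$, I would keep the sum in the uncollapsed form $\theta^2\sum_{n=N-j}^{N-1}\frac{h(n,j)}{n(\theta+n+j)}$ and use the bounds $h(n,j)\leq 1$, $n \geq N/2$, and $\theta+n+j \geq N$ to obtain a contribution of order $\theta^2 j/N^2 = O(1/N)$. Combining the two regimes with the estimates for $\mathbb{E} W_{jN}$ and $\mathbb{P}(J_N=j)$ yields the lemma.
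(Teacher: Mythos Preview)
Your proof is correct and follows the same overall strategy as the paper: apply the coupling inequality \eqref{estimatedistance}, identify $\mathbb{P}(J_N=j)=\frac{\theta}{N}\Psi_N(j)$ exactly, and bound the other two terms by $O(1/N)$. The only notable difference is in the treatment of $\mathbb{P}(J_N+K_N=j+1)$: you carry out a telescoping computation and then split into the regimes $j\geq N/2$ and $j<N/2$, whereas the paper dispatches this in one line by observing that $\mathbb{P}(K_N=\ell)\leq \theta/(N+\theta)$ uniformly in $\ell$, so that
\[
\mathbb{P}(J_N+K_N=j+1)=\sum_{\ell=1}^{j}\mathbb{P}(J_N=j+1-\ell)\,\mathbb{P}(K_N=\ell)\leq \frac{\theta}{N+\theta}\,\mathbb{P}(J_N\leq j)\leq \frac{\theta}{N}.
\]
Your telescoping identity is a nice exact formula, but for the purposes of this lemma it is more work than needed; the paper's crude bound on $\mathbb{P}(K_N=\ell)$ avoids the case split entirely. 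Similarly, for $\mathbb{E}W_{jN}$ the paper uses the cruder estimate $\mathbb{E}(U_i^{(j)})\leq \theta^2/(i-1)^2$ rather than partial fractions, again arriving at $O(1/N)$ with less effort.
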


In order to prove Lemma~\ref{asymplk}, we note that, by \ref{estimatedistance}, 
\begin{equation}
\mathbb{E}|C_j(N)-W_j| \leq \mathbb{E}(W_{jN})+ \mathbb{P}(J_N+K_N=j+1)+\mathbb{P}(J_N=j).
\end{equation}
It thus suffices to provide bounds
on $\mathbb{E}[W_{jN}]$, $\mathbb{P}[J_N=j]$ and $\mathbb{P}[J_N+K_N=j+1]$.

\begin{lemma}
For any $\theta>0$ and for every $1\leq j\leq N$,
\begin{equation}\label{bdE}
   \mathbb{E}(W_{jN})\leq\frac{\theta^2}{N-1}.
\end{equation}

\begin{proof}
Let 
\begin{equation}\label{definition_U}
U_i^{(j)}:=\xi_i(1-\xi_{i+1})\dots(1-\xi_{i+j-1})\xi_{i+j},
\end{equation}
then, for $i\geq2$, 
\begin{equation}\label{expectation_U}
\mathbb{E}(U_i^{(j)})\leq \mathbb{E}(\xi_i)\mathbb{E}(\xi_{i+j})=\frac{\theta^2}{(\theta+i-1)(\theta+i+j-1)}\leq \frac{\theta^2}{(i-1)^2}.
\end{equation}
By \eqref{expectation_U}, we have immediately that, for any $\theta>0$,
\begin{equation}
\mathbb{E}(W_{jN})=\sum_{i=N+1}^\infty U_i^{(j)}\leq \theta^2\sum_{\ell=N}^\infty\frac 1{\ell^2}\leq \frac{\theta^2}{N-1}.
\end{equation}

\end{proof}
\end{lemma}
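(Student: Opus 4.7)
The plan is to reduce the expectation of $W_{jN}$ to a term-by-term estimate, exploiting the independence of the Bernoullis $\xi_i$ together with the trivial bound that each factor $(1-\xi_{i+k})$ lies in $[0,1]$. Writing
\[
W_{jN} = \sum_{i=N+1}^{\infty} U_i^{(j)}, \qquad U_i^{(j)} := \xi_i(1-\xi_{i+1})\cdots(1-\xi_{i+j-1})\xi_{i+j},
\]
I would first drop the middle factors to get $U_i^{(j)} \leq \xi_i\, \xi_{i+j}$ pointwise, and then use the mutual independence of the $\xi$'s together with the explicit law $\mathbb{P}(\xi_k=1) = \theta/(\theta+k-1)$ to obtain
\[
\mathbb{E}[U_i^{(j)}] \leq \mathbb{E}[\xi_i]\,\mathbb{E}[\xi_{i+j}] = \frac{\theta^2}{(\theta+i-1)(\theta+i+j-1)}.
\]

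Next, I would coarsen this to a bound uniform in $j$. Since $i\geq N+1\geq 2$ throughout the range of summation, both denominators are bounded below by $i-1$, yielding $\mathbb{E}[U_i^{(j)}]\leq \theta^2/(i-1)^2$. Summing over $i\geq N+1$ and applying the standard tail comparison
\[
\sum_{\ell\geq N}\frac{1}{\ell^2} \ \leq\ \int_{N-1}^{\infty}\frac{dx}{x^2} \ =\ \frac{1}{N-1}
\]
would then give $\mathbb{E}[W_{jN}]\leq \theta^2/(N-1)$, as claimed.

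There is no real obstacle: the argument is a one-line expectation bound followed by a $1/\ell^2$ tail estimate. The only mild nuance is that one should throw away the middle $(1-\xi)$ factors at the outset rather than attempt to use them for additional decay, and that the resulting bound is uniform in the cycle-length parameter $j$ because only the two extreme factors $\xi_i$ and $\xi_{i+j}$ contribute, each of order $1/i$. This uniformity in $j$ is exactly what makes the lemma useful for the subsequent $L^1$ control of $|G_N - H_N|$ appearing in Lemma~\ref{asymplk1}.
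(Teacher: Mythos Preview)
Your proposal is correct and follows essentially the same approach as the paper: define $U_i^{(j)}$, drop the intermediate $(1-\xi)$ factors, use independence of $\xi_i$ and $\xi_{i+j}$ to bound $\mathbb{E}[U_i^{(j)}]\le \theta^2/(i-1)^2$, and finish with the tail estimate $\sum_{\ell\ge N}\ell^{-2}\le (N-1)^{-1}$. Your write-up is slightly more explicit about the integral comparison and the uniformity in $j$, but the argument is the same.
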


We  compute next  the distribution of the random variable $J_N$ explicitly.

\begin{lemma}\label{P=Psi}
\begin{eqnarray}
\mathbb{P}[J_N=j]=\frac \theta N\Psi_N(j).
\end{eqnarray}

\begin{proof}
The random variable $J_N$ is equal to $j$ if and only if $\xi_N=0, \xi_{N-1}=0, \dots, \xi_{N-j+2}=0$ and $\xi_{N-j+1}=1$. So, for any $1\leq j\leq N$,
\begin{eqnarray}
\mathbb{P}[J_N=j]&=&\frac{N-1}{\theta+N-1}\times\frac{N-2}{\theta+N-2}\dots\frac{N-(j-1)}{\theta+N-(j-1)}\times\frac{\theta}{\theta+N-j}\nonumber\\
&=&\label{pj1}\frac\theta N\prod_{k=0}^{j-1}\frac{N-k}{\theta+N-k-1}=\frac \theta N\Psi_N(j),
\end{eqnarray}
which proves the claim.
\end{proof}
\end{lemma}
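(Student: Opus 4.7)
The plan is to compute $\mathbb{P}[J_N = j]$ directly from the definition of $J_N$ by translating the minimum condition into an explicit intersection of events on the Bernoulli variables $\xi_i$ and then exploiting their independence.

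First I would unpack the event. Since $J_N = \min\{j \geq 1 : \xi_{N-j+1} = 1\}$, the occurrence of $\{J_N = j\}$ is equivalent to having $\xi_{N-j+1} = 1$ while $\xi_{N-j+2} = \xi_{N-j+3} = \cdots = \xi_N = 0$: the latter block guarantees $J_N > j-1$, and the former guarantees $J_N \leq j$. By independence of the $\xi_i$'s, this factors as
$$\mathbb{P}[J_N = j] = \mathbb{P}[\xi_{N-j+1} = 1]\prod_{i=N-j+2}^{N}\mathbb{P}[\xi_i = 0].$$

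Next I would substitute the Bernoulli weights $\mathbb{P}[\xi_i = 1] = \theta/(\theta + i - 1)$ and $\mathbb{P}[\xi_i = 0] = (i-1)/(\theta + i -1)$, giving a single factor $\theta/(\theta + N - j)$ times a descending product $\prod_{i=N-j+2}^{N}(i-1)/(\theta + i -1)$. Reindexing by $k = N - i + 1$ so that $k$ ranges from $0$ to $j-1$, and pulling the factor $\theta$ out front together with a compensating $N/N$, the expression collapses to
$$\mathbb{P}[J_N = j] = \frac{\theta}{N}\prod_{k=0}^{j-1}\frac{N-k}{\theta + N - k - 1} = \frac{\theta}{N}\Psi_N(j),$$
which is precisely the claim. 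There is no serious obstacle here; the argument is essentially a bookkeeping exercise, and the only thing to watch is that the reindexing lines up so that the lone $\theta$ coming from $\xi_{N-j+1} = 1$ merges cleanly with the telescoping shape of the product defining $\Psi_N(j)$.
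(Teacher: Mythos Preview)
Your proposal is correct and follows essentially the same approach as the paper: translate $\{J_N=j\}$ into the event $\{\xi_{N-j+1}=1,\ \xi_{N-j+2}=\cdots=\xi_N=0\}$, use independence of the Bernoulli variables, and reindex the resulting product to identify it with $\frac{\theta}{N}\Psi_N(j)$. The only difference is cosmetic---the paper writes out the product directly rather than describing the reindexing---so there is nothing substantive to add.
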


We now bound the distribution of the random variable $J_N+K_N$.


\begin{lemma}
For any $\theta>0$, 

\begin{equation}\label{bdP5}
\mathbb{P}[K_N+J_N=j+1]\leq\frac{\theta}{N}.
\end{equation}

\begin{proof}

Consider first the random variable $K_N$. For any $\theta>0$,  
\begin{eqnarray}\label{kn}
\mathbb{P}[K_N=j]&=&\frac{N}{\theta+N}\times\frac{N+1}{\theta+N+1}\dots\frac{N+j-2}{\theta+N+j-2}\times\frac{\theta}{\theta+N+j-1}\nonumber\\
&\leq&\frac{N\theta}{(\theta+N+j-2)(\theta+N+j-1)}\nonumber\\ &\leq&\frac{\theta}{\theta+N+j-1}\leq\frac{\theta}{N+\theta}.
\end{eqnarray}


For any $\theta>0$, use \eqref{kn} to write
\begin{eqnarray}\label{bdP4}
\mathbb{P}[K_N+J_N=j+1]&=&\sum_{\ell=1}^j\mathbb{P}(J_N=j+1-\ell)\mathbb{P}(K_N=\ell)\nonumber\\ &\leq&\frac{\theta}{N+\theta}\sum_{\ell=1}^j\mathbb{P}(J_N=j+1-\ell)\nonumber\\&=&\frac{\theta}{N+\theta}\mathbb{P}(J_N\leq j)\leq\frac{\theta}{N}.
\end{eqnarray}
\end{proof}
\end{lemma}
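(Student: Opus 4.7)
The plan is to exploit independence and a uniform upper bound on the tail of $K_N$. The key observation is that $J_N$ is determined by the Bernoulli variables $\xi_2,\dots,\xi_N$ (it is the backward distance from $N$ to the most recent $\xi_i=1$, with the convention that a $1$ sits at position $N+1$ via the trailing marker), while $K_N$ is determined by $\xi_{N+1},\xi_{N+2},\dots$ (the forward distance to the first $\xi_i=1$ strictly after $N$). Since the $\xi_i$ are independent, $J_N$ and $K_N$ are independent random variables.

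Given this independence, I would decompose by conditioning on the value of $K_N$:
\[
\mathbb{P}[K_N+J_N=j+1] \;=\; \sum_{\ell=1}^{j}\mathbb{P}[K_N=\ell]\,\mathbb{P}[J_N=j+1-\ell].
\]
The first factor is then bounded by a quantity independent of $\ell$. Writing
\[
\mathbb{P}[K_N=\ell] \;=\; \Bigl(\prod_{k=0}^{\ell-2}\frac{N+k}{\theta+N+k}\Bigr)\frac{\theta}{\theta+N+\ell-1},
\]
each factor in the product is at most $1$, so $\mathbb{P}[K_N=\ell] \le \theta/(\theta+N+\ell-1) \le \theta/(N+\theta)$.

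Pulling this uniform bound out of the sum yields
\[
\mathbb{P}[K_N+J_N=j+1] \;\le\; \frac{\theta}{N+\theta}\sum_{\ell=1}^{j}\mathbb{P}[J_N=j+1-\ell] \;=\; \frac{\theta}{N+\theta}\,\mathbb{P}[J_N\le j] \;\le\; \frac{\theta}{N},
\]
since $\mathbb{P}[J_N\le j]\le 1$ and $N+\theta\ge N$. This closes the argument.

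The main conceptual point, and the only real obstacle, is identifying the independence of $J_N$ and $K_N$ cleanly; once that is in place the rest is a one-line telescoping estimate for $\mathbb{P}[K_N=\ell]$ and a trivial summation. No delicate estimate is needed — in particular, one does not need to know $\mathbb{P}[J_N=j]$ exactly (as in Lemma \ref{P=Psi}), only that the full distribution of $J_N$ sums to at most $1$.
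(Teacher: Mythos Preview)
Your proof is correct and follows essentially the same approach as the paper: both use the independence of $J_N$ and $K_N$, bound $\mathbb{P}[K_N=\ell]\le\theta/(N+\theta)$ uniformly in $\ell$ via the explicit product formula, and then sum out the $J_N$ distribution. The only difference is that you make the independence explicit, whereas the paper uses it tacitly when writing the convolution.
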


The last three lemmas imply the result of Lemma \ref{asymplk}.
We have now controlled the $L^1$-distance between $G_N$ and $H_N$.

\subsection{$L^2$ bounds on the Feller coupling} \ \newline

We now turn to the control of the $L^2$-distance between the random variables $G_N$ and $H_N$.
We first state our result in a simple (but not optimal) shape.

\begin{lemma}\label{L2boundwithmax}
For every $\theta>0$, there exists a constant $C(\theta)$ such that, for every integer $N$,
\begin{equation}
\mathbb{E}\left((G_N-H_N)^2\right) \leq C(\theta) \max_{1\leq j\leq N} |u_j| ^2
\end{equation}
\end{lemma}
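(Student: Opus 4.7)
The plan is to mirror the $L^1$ argument. Starting from the coupling bound \eqref{estimatedistance}, multiply by $|u_j|$, sum over $j \leq N$, and set $M := \max_{1 \leq j \leq N} |u_j|$ to obtain the almost-sure estimate
\[ |G_N - H_N| \;\leq\; M \bigl( A_N + B_N + C_N \bigr), \]
where $A_N := \sum_{j=1}^N W_{jN}$, $B_N := \sum_{j=1}^N \mathbbm{1}_{\{J_N + K_N = j+1\}}$, and $C_N := \sum_{j=1}^N \mathbbm{1}_{\{J_N = j\}}$. Since $J_N$ and $J_N + K_N$ each take a single value, $B_N, C_N \leq 1$ pointwise, and the inequality $(a+b+c)^2 \leq 3(a^2+b^2+c^2)$ gives
\[ \mathbb{E}\bigl[ (G_N - H_N)^2 \bigr] \;\leq\; 3 M^2 \bigl( \mathbb{E}[A_N^2] + 2 \bigr), \]
so the proof reduces to showing $\mathbb{E}[A_N^2] \leq C(\theta)$ uniformly in $N$.

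To bound this second moment, I would expand
\[ \mathbb{E}[A_N^2] \;=\; \sum_{j,j'=1}^N \sum_{i,i'\geq N+1} \mathbb{E}\bigl[ U_i^{(j)} U_{i'}^{(j')} \bigr] \]
with $U_i^{(j)}$ as in \eqref{definition_U}, and distinguish four cases according to the relative position of $i$ and $i'$. On the diagonal $i = i'$, only $j = j'$ contributes, since otherwise the simultaneous requirements $\xi_{i+j} = 1$, $\xi_{i+j'} = 1$ with intermediate $\xi$'s equal to $0$ are incompatible; this case contributes $\mathbb{E}[A_N]$. For $i < i' < i+j$ (or symmetrically), $U_i^{(j)}$ requires $\xi_{i'} = 0$ while $U_{i'}^{(j')}$ requires $\xi_{i'} = 1$, so the term vanishes. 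For adjacent spacings $i' = i+j$, independence of the Bernoullis yields
\[ \mathbb{E}\bigl[ U_i^{(j)} U_{i+j}^{(j')} \bigr] \;\leq\; \mathbb{E}[\xi_i] \mathbb{E}[\xi_{i+j}] \mathbb{E}[\xi_{i+j+j'}] \;\leq\; \frac{c^3}{i^3}, \]
with $c := \max(\theta, 1)$ and using $\mathbb{E}[\xi_k] \leq c/k$. Finally, for $i' > i+j$ the two events use disjoint $\xi$-coordinates and are independent, so $\mathbb{E}[U_i^{(j)} U_{i'}^{(j')}] = \mathbb{E}[U_i^{(j)}] \mathbb{E}[U_{i'}^{(j')}]$.

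Assembling: by \eqref{bdE}, $\mathbb{E}[A_N] \leq N\theta^2/(N-1) = O_\theta(1)$. The adjacent case contributes (with both orderings $(i,j) \leftrightarrow (i',j')$) at most $2c^3 \sum_{i>N} N^2/i^3 \leq c^3$ via $\sum_{i>N} i^{-3} \leq (2N^2)^{-1}$, while the separated case is bounded by the full product sum $\bigl(\sum_{i,j} \mathbb{E}[U_i^{(j)}]\bigr)^2 = \mathbb{E}[A_N]^2 = O_\theta(1)$. Hence $\mathbb{E}[A_N^2] \leq \mathbb{E}[A_N] + c^3 + \mathbb{E}[A_N]^2 = O_\theta(1)$, completing the proof.

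The main obstacle is precisely this uniform-in-$N$ second-moment bound on $A_N$. The naive estimate $W_{jN} \leq W_j$, with the $W_j$ independent Poissons of total mean $\theta H_N \sim \theta \log N$, would only give $\mathbb{E}[(\sum_j W_j)^2] = O(\log^2 N)$, which diverges. One has to exploit the near-orthogonality of the family $(W_{jN})_j$ --- specifically, the fact that at most one $j$ can satisfy $U_i^{(j)} = 1$ for any fixed $i$ --- which both kills the range $i < i' < i+j$ and confines the remaining nontrivial correlations to the single adjacent-spacings case.
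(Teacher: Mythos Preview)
Your proof is correct and takes a genuinely different route from the paper.

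The paper derives Lemma~\ref{L2boundwithmax} as an immediate corollary of the sharper Lemma~\ref{Lemma_G-Hsquared}, which it proves by expanding $\mathbb{E}[(G_N-H_N)^2] \leq \sum_{j,k}|u_j||u_k|\,\mathbb{E}[|C_j-W_j||C_k-W_k|]$ and then controlling all nine cross terms arising from the product of the two three-term bounds \eqref{estimatedistance}. Each cross term is estimated separately (the analysis of $\mathbb{E}[W_{jN}W_{kN}]$ in particular uses exactly the same $i,i'$ case split you carry out), yielding a bound in terms of the averages $\frac{1}{N}\sum|u_j|$, $\frac{1}{N}\sum|u_j|^2$, and their $\Psi_N$-weighted versions. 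Replacing each $|u_j|$ by $M$ and using $\frac{1}{N}\sum_k\Psi_N(k)=1/\theta$ then gives the lemma.

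Your approach short-circuits eight of the nine cross terms: by pulling out $M$ at the outset and observing that $B_N,C_N\leq 1$ deterministically, only the second moment of $A_N=\sum_j W_{jN}$ remains. Your computation of $\mathbb{E}[A_N^2]$ is essentially the paper's computation of $\sum_{j,k}\mathbb{E}[W_{jN}W_{kN}]$ with $|u_j|\equiv 1$, and your case analysis and bounds are correct. The trade-off is that your argument does not recover the refined bound of Lemma~\ref{Lemma_G-Hsquared}, which the paper later needs (via Theorem~\ref{cesaro1bound} and Theorem~\ref{CesaroL2}) to handle the Cesaro-mean hypotheses in Theorem~\ref{Thm1general}(4). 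For the statement as posed, however, your route is cleaner.
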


This result is an immediate consequence of the following much more precise statement.

\begin{lemma}\label{Lemma_G-Hsquared}
For every $\theta>0$, there exists a constant $C(\theta)$ such that, for every integer $N$,
\begin{eqnarray}
\mathbb{E}\left((G_N-H_N)^2\right)&\leq& C(\theta)  \left[( \frac 1N\sum_{j=1}^N|u_j|)^2+ \frac1N\sum_{j=1}^N|u_j|^2\right.\nonumber\\
&+&\frac{1}{N^2}\sum_{j=1}^N|u_j|  \sum_{k=1}^N|u_k| \Psi_N(k) \nonumber\\
&+& \left.\frac{1}{N}\sum_{j=1}^N|u_j|^2\Psi_N(j) \right]
\end{eqnarray}

\begin{proof}
We note that
\begin{equation}
\mathbb{E}\left((G_N-H_N)^2\right)\leq \sum_{j,k=1}^N|u_j||u_k|\mathbb{E}(|C_j-W_j||C_k-W_k|).
\end{equation}

By \eqref{estimatedistance}, for any fixed $1\leq j,k\leq N$,
\begin{eqnarray}\label{C-Wsquared}
|C_j-W_j||C_k-W_k|&\leq& W_{j,N}W_{k,N}+W_{j,N}\mathbbm{1}_{J_N=k}+W_{j,N}\mathbbm{1}_{J_N+K_N=k+1}\nonumber\\
&&+\mathbbm{1}_{J_N+K_N=j+1}W_{k,N}+\mathbbm{1}_{J_N+K_N=j+1}\mathbbm{1}_{J_N=k}\nonumber\\
&&+\mathbbm{1}_{J_N+K_N=j+1}\mathbbm{1}_{J_N+K_N=k+1}\nonumber\\
&&+\mathbbm{1}_{J_N=j}W_{k,N}+\mathbbm{1}_{J_N=j}\mathbbm{1}_{J_N=k}\nonumber\\
&&+\mathbbm{1}_{J_N=j}\mathbbm{1}_{J_N+K_N=k+1}.
\end{eqnarray}

To control \eqref{C-Wsquared}, we will give upper bounds for all the terms on the RHS. We start by giving a bound for $\mathbb{E}(W_{j,N}W_{k,N})$:
By \eqref{definition_U}, we have

\begin{eqnarray}\label{expectation_WW}
\mathbb{E}(W_{j,N}W_{k,N})&=&\sum_{i,\ell\geq N+1}U_i^{(j)}U_\ell^{(k)}\nonumber\\
&=&\sum_{\substack{i,\ell\geq N+1\\i<\ell}}U_i^{(j)}U_\ell^{(k)}+\sum_{\substack{i,\ell\geq N+1\\i>\ell}}U_i^{(j)}U_\ell^{(k)}+\sum_{i\geq N+1} U_i^{(j)}U_i^{(k)}.\nonumber\\
\end{eqnarray}

We write the first term on the RHS as follows:
\begin{equation}
\sum_{\substack{i,\ell\geq N+1\\i<\ell<i+j}}U_i^{(j)}U_\ell^{(k)}+\sum_{\substack{i,\ell\geq N+1\\i+j<\ell}}U_i^{(j)}U_\ell^{(k)}+\sum_{i\geq N+1}U_i^{(j)}U_{i+j}^{(k)}.
\end{equation}

It is easy to see that for any $\ell\in(i,i+j)$, $U_i^{(j)}U_\ell^{(k)}=0$. If $\ell$ is strictly larger than $i+j$, then $U_i^{(j)}$ and $U_\ell^{(k)}$ are independent. This gives, using \eqref{expectation_U},

\begin{eqnarray}
\sum_{\substack{i,\ell\geq N+1\\i+j<\ell}}\mathbb{E}\left(U_i^{(j)}U_\ell^{(k)}\right)&=&\sum_{\substack{i,\ell\geq N+1\\i+j<\ell}}\mathbb{E}\left(U_i^{(j)}\right)\mathbb{E}\left(U_\ell^{(k)}\right)\nonumber\\
&\leq&\sum_{i=N+1}^\infty\mathbb{E}\left(U_i^{(j)}\right)\sum_{\ell>i+j}\frac{\theta^2}{(\ell-1)^2}\nonumber\\
&\leq&\sum_{i=N+1}^\infty \frac{\theta^4}{(i-1)^2}\frac{1}{(i+j-1)}\leq \frac{C(\theta)}{N^2}.
\end{eqnarray}

Also, by the same argument,
\begin{eqnarray}
\sum_{i=N+1}^\infty U_i^{(j)}U_{i+j}^{(k)}&\leq& \sum_{i=N+1}^\infty\mathbb{E}(\xi_i)\mathbb{E}(\xi_{i+j})\mathbb{E}(\xi_{i+j+k})\nonumber\\
&\leq& \sum_{i=N+1}^\infty\frac{\theta^3}{(i-1)(i+j-1)(i+j+k-1)}\leq \frac{C(\theta)}{N^2}.\qquad
\end{eqnarray}

For the bound of \eqref{expectation_WW}, we consider now the second and the third term on the RHS. But the second term can be bounded similarly to the first term. For the third term in \eqref{expectation_WW}, we observe that $U_i^{(j)}U_i^{(k)}=0$ if $j\neq k$. So, by \eqref{expectation_U}

\begin{eqnarray}
\sum_{i\geq N+1} \mathbb{E}\left(U_i^{(j)}U_i^{(k)}\right)&=&\sum_{i\geq N+1} \mathbb{E}\left(\left(U_i^{(j)}\right)^2\right)=\sum_{i\geq N+1} \mathbb{E}\left(U_i^{(j)}\right)\nonumber\\
&\leq&\sum_{i\geq N+1} \frac{\theta}{(i-1)^2}\leq \frac{C(\theta)}{N}.
\end{eqnarray}

This gives
\begin{equation}
\mathbb{E}(W_{j,N}W_{k,N})=\left\{\begin{array}{cc} C(\theta)/N^2 & \textrm{if }j\neq k\\ C(\theta)/N & \textrm{if }j=k.\end{array}\right. 
\end{equation}

So,
\begin{eqnarray}
\sum_{j,k=1}^N|u_j||u_k|\mathbb{E}(W_{j,N}W_{k,N})&\leq& C_1(\theta)\left(\frac 1N\sum_{j=1}^N|u_j|\right)^2+C_2(\theta)\frac1N\sum_{j=1}^N|u_j|^2\nonumber\\
\end{eqnarray}

Obviously, $W_{j,N}$ and $\mathbbm{1}_{J_N=k}$ are independent. So, the expectation of the second term on the RHS in \eqref{C-Wsquared} is bounded as follows:
\begin{equation}
\mathbb{E}(W_{j,N}\mathbbm{1}_{J_N=k})\leq \frac{C(\theta)}{N}\mathbb{P}(J_N=k).
\end{equation}

Of course, this bound is also valid for $\mathbb{E}(\mathbbm{1}_{J_N=j}W_{k,N})$.\\

Then,
\begin{eqnarray}
\sum_{j,k=1}^N|u_j||u_k|\mathbb{E}(W_{j,N}\mathbbm{1}_{J_N=k})&\leq& \frac{C(\theta)}{N}\sum_{j=1}^N|u_j|\cdot \sum_{k=1}^N|u_k|\mathbb{P}.(J_N=k)\nonumber\\
\end{eqnarray}

For $W_{j,N}\mathbbm{1}_{J_N+K_N=k+1}$, we write
\begin{eqnarray}
&&\mathbb{E}(W_{j,N}\mathbbm{1}_{J_N+K_N=k+1})=\mathbb{E}\left(\sum_{\ell=1}^k W_{j,N}\mathbbm{1}_{J_N+K_N=k+1}\mathbbm{1}_{J_N=\ell}\right)\nonumber\\
&&\qquad=\sum_{\ell=1}^k \mathbb{E}(W_{j,N+k-\ell}\mathbbm{1}_{K_N=k+1-\ell})\mathbb{P}(J_N=\ell).\qquad
\end{eqnarray}

But,
\begin{eqnarray}
&&\mathbb{E}(W_{j,N+k-\ell}\mathbbm{1}_{K_N=k+1-\ell})\nonumber\\
&&\quad=\mathbb{E}\left(\sum_{i>N+k+1-\ell}U_i^{(j)}\mathbbm{1}_{K_N=k+1-\ell}\right)+\mathbb{E}\left(U_{N+k+1-\ell}^{(j)}\mathbbm{1}_{K_N=k+1-\ell}\right),\qquad\qquad
\end{eqnarray}

where $\sum_{i>N+k+1-\ell}U_i^{(j)}$ and $\mathbbm{1}_{K_N=k+1-\ell}$ are independent and 
\begin{eqnarray}
&&\mathbb{E}\left(U_{N+k+1-\ell}^{(j)}\mathbbm{1}_{K_N=k+1-\ell}\right)\nonumber\\
&&\qquad=\mathbb{E}\left(\xi_{N+k+1-\ell}(1-\xi_{N+k+2-\ell})\dots(1-\xi_{N+k+j-\ell})\xi_{N+k+j+1-\ell}\xi_{N+k+1-\ell}\right)\nonumber\\
&&\qquad=\mathbb{E}\left(U_{N+k+1-\ell}^{(j)}\right)\leq \frac{C(\theta)}{(N+k-\ell)^2}\leq \frac{C(\theta)}{N^2}.
\end{eqnarray}

So, by \eqref{kn}
\begin{eqnarray}
\mathbb{E}(W_{j,N+k-\ell}\mathbbm{1}_{K_N=k+1-\ell})&\leq& \frac{C_1(\theta)}{N}\mathbb{P}(K_N=k+1-\ell)+\frac{C_2(\theta)}{N^2}\leq\frac{C(\theta)}{N^2},\qquad\qquad
\end{eqnarray}

which gives 
\begin{equation}
\mathbb{E}(W_{j,N+k-\ell}\mathbbm{1}_{K_N=k+1-\ell})\leq \frac{C(\theta)}{N^2}.
\end{equation}
This gives also the bound for $\mathbb{E}(\mathbbm{1}_{J_N+K_N=j+1}W_{k,N})$. \\

Then,
\begin{eqnarray}
\sum_{j,k=1}^N|u_j||u_k|\mathbb{E}(W_{j,N}\mathbbm{1}_{J_N+K_N=k+1})&\leq& C(\theta)\left(\frac 1N\sum_{j=1}^N|u_j|\right)^2.
\end{eqnarray}
For the remaining terms in \eqref{C-Wsquared}, we observe that 

\begin{eqnarray}
\mathbb{E}(\mathbbm{1}_{J_N+K_N=j+1}\mathbbm{1}_{J_N=k})&=&\mathbb{E}(\mathbbm{1}_{K_N=j+1-k}\mathbbm{1}_{J_N=k})=\mathbb{P}(K_N=j+1-k)\mathbb{P}(J_N=k)\nonumber\\
&\leq&\frac{C(\theta)}{N}\mathbb{P}(J_N=k).
\end{eqnarray}

This applies for $\mathbbm{1}_{J_N=j}\mathbbm{1}_{J_N+K_N=k+1}$, as well.\\

Then
\begin{eqnarray}
\sum_{j,k=1}^N|u_j||u_k|\mathbb{E}(\mathbbm{1}_{J_N+K_N=j+1}\mathbbm{1}_{J_N=k})&\leq& \frac{C(\theta)}{N}\sum_{j=1}^N|u_j|\cdot \sum_{k=1}^N|u_k|\mathbb{P}(J_N=k)\nonumber\\
\end{eqnarray}

Also,
\begin{equation}
\mathbbm{1}_{J_N+K_N=j+1}\mathbbm{1}_{J_N+K_N=k+1}=\left\{\begin{array}{cc}1 & \textrm{if }k=j\\ 0 & \textrm{if }k\neq j,\end{array}\right.
\end{equation}
so, 

\begin{equation}
\mathbb{E}(\mathbbm{1}_{J_N+K_N=j+1}\mathbbm{1}_{J_N+K_N=k+1})=\left\{\begin{array}{cc}\mathbb{P}(J_N+K_N=j+1) & \textrm{if }k=j\\ 0 & \textrm{if }k\neq j\end{array}\right.
\end{equation}

and by \eqref{bdP5}
\begin{eqnarray}
\sum_{j,k=1}^N|u_j||u_k|\mathbb{E}(\mathbbm{1}_{J_N+K_N=j+1}\mathbbm{1}_{J_N+K_N=k+1})&=&\sum_{j=1}^N|u_j|^2\mathbb{P}(J_N+K_N=j+1)\nonumber\\
&\leq &\frac{C(\theta)}{N} \sum_{j=1}^N|u_j|^2.
\end{eqnarray}

It is obvious that $\mathbbm{1}_{J_N=j}\mathbbm{1}_{J_N=k}=0$ for $k\neq j$. So,
\begin{equation}
\mathbb{E}(\mathbbm{1}_{J_N=j}\mathbbm{1}_{J_N=k})=\left\{\begin{array}{cc}\mathbb{P}(J_N=j)& \textrm{if }k=j\\ 0 & \textrm{otherwise.}\end{array}\right.
\end{equation}

Then,
\begin{eqnarray}
\sum_{j,k=1}^N|u_j||u_k|\mathbb{E}(\mathbbm{1}_{J_N=j}\mathbbm{1}_{J_N=k})\leq \sum_{j=1}^N|u_j|^2\mathbb{P}(J_N=j),\end{eqnarray}
which, using also Lemma \ref{P=Psi}, proves the claim of Lemma \ref{Lemma_G-Hsquared}.
\end{proof}
\end{lemma}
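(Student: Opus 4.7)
The plan is to square the Barbour--Tavar\'e triangle inequality $|C_j(N) - W_j| \leq W_{jN} + \mathbbm{1}_{J_N+K_N=j+1} + \mathbbm{1}_{J_N=j}$ from Theorem~\ref{theorembarbourtavare}(iii), insert it into
\begin{equation*}
(G_N - H_N)^2 \leq \Bigl(\sum_{j=1}^N |u_j|\,|C_j(N)-W_j|\Bigr)^{2},
\end{equation*}
expand the resulting double sum over $j,k$, and take expectations. This reduces the estimate to nine families of cross-term expectations built from the three ``error sources'' $W_{jN}$, $\mathbbm{1}_{J_N+K_N=j+1}$, $\mathbbm{1}_{J_N=j}$, which I would match term by term with the four terms on the right-hand side of the lemma.

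For the purely tail term $\mathbb{E}(W_{jN}W_{kN})$, I would write $W_{jN}=\sum_{i>N} U_i^{(j)}$ and split the double sum $\sum_{i,\ell>N}\mathbb{E}(U_i^{(j)}U_\ell^{(k)})$ according to whether the supports of $U_i^{(j)}$ and $U_\ell^{(k)}$ are disjoint (use independence together with $\mathbb{E} U_i^{(j)}\leq \theta^2/(i-1)^2$ from \eqref{expectation_U}), coincide (nonzero only when $j=k$, and then equals $\mathbb{E} U_i^{(j)}$ by idempotence), or overlap on a boundary (a single index summation). This yields $O(1/N^2)$ off the diagonal and $O(1/N)$ on it, producing the $(\frac{1}{N}\sum|u_j|)^2$ and $\frac{1}{N}\sum|u_j|^2$ terms. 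For the mixed term $\mathbb{E}(W_{jN}\mathbbm{1}_{J_N=k})$, I note that $\{J_N=k\}$ depends only on $\xi_{N-k+1},\dots,\xi_N$ while $W_{jN}$ depends only on $\xi_i$ with $i>N$, so the two factors are independent; combined with $\mathbb{E}(W_{jN})=O(1/N)$ and Lemma~\ref{P=Psi} this contributes the $\frac{1}{N^2}\sum|u_j|\sum|u_k|\Psi_N(k)$ term.

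The genuinely delicate case, which I expect to be the main obstacle, is $\mathbb{E}(W_{jN}\mathbbm{1}_{J_N+K_N=k+1})$, because $W_{jN}$ and $K_N$ both depend on the Bernoullis $\xi_i$ with $i>N$. My plan is to condition on $\{J_N=\ell\}$ for $\ell=1,\dots,k$: once $\ell$ is fixed, the event $\{K_N=k+1-\ell\}$ concerns only $\xi_{N+1},\dots,\xi_{N+k+1-\ell}$, and all but one of the $U_i^{(j)}$ making up $W_{jN}$ use Bernoullis strictly to the right of this block, hence are independent of it. The one overlapping boundary term $U^{(j)}_{N+k+1-\ell}$ must be isolated and bounded directly through $\mathbb{E} U_i^{(j)}\leq \theta^2/(i-1)^2$, yielding an $O(1/N^2)$ contribution. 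Together with the uniform estimate $\mathbb{P}(K_N=m)\leq \theta/(N+\theta)$ from \eqref{kn} and $\mathbb{P}(J_N=\ell)=(\theta/N)\Psi_N(\ell)$, summing over $\ell$ reconstructs another $O(1/N^2)$ bound.

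The remaining pure indicator terms are straightforward: $\mathbbm{1}_{J_N=j}\mathbbm{1}_{J_N=k}$ and $\mathbbm{1}_{J_N+K_N=j+1}\mathbbm{1}_{J_N+K_N=k+1}$ both vanish off the diagonal, reducing to $\mathbb{P}(J_N=j)$ and $\mathbb{P}(J_N+K_N=j+1)$ which are controlled by Lemma~\ref{P=Psi} and \eqref{bdP5}; the cross term $\mathbbm{1}_{J_N=j}\mathbbm{1}_{J_N+K_N=k+1}=\mathbbm{1}_{J_N=j, K_N=k+1-j}$ factorizes by the same $i\leq N$ versus $i>N$ independence used above. Adding all nine contributions and tagging each according to the four template terms on the right-hand side of the lemma completes the proof.
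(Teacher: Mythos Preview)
Your proposal is correct and follows essentially the same route as the paper's own proof: expand via the Barbour--Tavar\'e inequality into nine cross terms, bound $\mathbb{E}(W_{jN}W_{kN})$ by splitting the double sum over $U_i^{(j)}U_\ell^{(k)}$ according to support overlap, use the $i\le N$ versus $i>N$ independence for $W_{jN}\mathbbm{1}_{J_N=k}$, and condition on $\{J_N=\ell\}$ to handle $W_{jN}\mathbbm{1}_{J_N+K_N=k+1}$, isolating the single boundary term $U^{(j)}_{N+k+1-\ell}$. One small imprecision: it is not that ``all but one'' of the $U_i^{(j)}$ in $W_{jN}$ use Bernoullis to the right of the $K_N$-block, but rather that the terms with $N<i\le N+k-\ell$ vanish on $\{K_N=k+1-\ell\}$ (since $\xi_i=0$ there), which is exactly how the paper reduces $W_{j,N}$ to $W_{j,N+k-\ell}$ before splitting off the boundary term.
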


\subsection{Cesaro Means and the Feller Coupling Bounds} \ \newline

The link between our estimates and Cesaro means of fractional order is given by an interesting interpretation of Cesaro means of order $\theta$  in terms of the random variable $J_N$.
\begin{lemma}\label{cesaroandJ}
The Cesaro mean $\sigma_N^{\theta}$ of order $\theta$ of a sequence $s=(s_j)_{j \geq 0}$, with $s_0=0$, is given by
\begin{equation}
\sigma_N^{\theta}(s) = \frac{N}{N+\theta} \sum_{j=1}^{N} s_j \mathbb{P}[J_N=j] =  \frac{\theta}{N+\theta} \sum_{j=1}^{N} s_j \Psi_N(j) 
\end{equation}
\end{lemma}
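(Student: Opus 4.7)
The plan is to verify both equalities as purely algebraic identities between the Cesaro weights and the explicit probability formula already computed for $J_N$.

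First, the equality between the two expressions on the right-hand side is immediate from Lemma~\ref{P=Psi}, which gives $\mathbb{P}[J_N=j] = \frac{\theta}{N}\Psi_N(j)$. Multiplying by $\frac{N}{N+\theta}$ yields $\frac{\theta}{N+\theta}\Psi_N(j)$, so term by term the two sums agree.

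Second, to match $\frac{\theta}{N+\theta}\Psi_N(j)$ with the Cesaro weight $A^{\theta-1}_{N-j}/A^{\theta}_{N}$ appearing in Definition~\ref{cesaro_def}, I would rewrite $\Psi_N(j)$ using its binomial form from \eqref{definition_psi} with $\gamma=\theta-1$:
\begin{equation*}
\Psi_N(j) = \binom{N-j+\theta-1}{N-j}\binom{N+\theta-1}{N}^{-1} = \frac{A^{\theta-1}_{N-j}}{A^{\theta-1}_{N}}.
\end{equation*}
Then the claim reduces to the single ratio identity
\begin{equation*}
\frac{A^{\theta}_{N}}{A^{\theta-1}_{N}} = \frac{N+\theta}{\theta},
\end{equation*}
which follows immediately by writing both Cesaro numbers as ratios of Gamma functions (or, equivalently, using $\binom{N+\theta}{N} = \frac{N+\theta}{\theta}\binom{N+\theta-1}{N}$, which is a standard recursion for generalized binomial coefficients). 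Combining this with the rewriting of $\Psi_N(j)$ gives exactly $\frac{A^{\theta-1}_{N-j}}{A^{\theta}_{N}}$.

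Finally, summing over $j=1,\dots,N$ and using the hypothesis $s_0=0$ to extend the range to $j=0,\dots,N$ without changing the value produces the defining expression $\sigma_N^{\theta}(s)=\sum_{j=0}^N \frac{A^{\theta-1}_{N-j}}{A^{\theta}_N}s_j$. There is no real obstacle here beyond bookkeeping with the binomial identities; the content of the lemma is really just the observation that the law of $J_N$ and the Cesaro weighting scheme of order $\theta$ are proportional, with proportionality constant $\frac{\theta}{N+\theta}$.
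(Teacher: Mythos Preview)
Your proof is correct and follows exactly the approach the paper indicates: it invokes Lemma~\ref{P=Psi} and the definitions of $\Psi_N(j)$ and the Cesaro numbers, and you have simply spelled out the binomial identity $A^{\theta}_N/A^{\theta-1}_N=(N+\theta)/\theta$ that makes the matching explicit. The paper's own proof is just the one-line remark that the lemma is immediate from these same ingredients, so there is nothing to add.
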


The proof of this lemma is immediate from Lemma \ref{P=Psi}, the Definition~\ref{cesaro_def} of the Cesaro means and of the numbers $\Psi_N(j)$, given in \eqref{definition_psi}.

Using this interpretation of the Cesaro means, we can state our results about the $L^1$ and $L^2$ distance between the variables $G_N$ and $H_N$ given in Lemma~\ref{asymplk1} and Lemma~\ref{Lemma_G-Hsquared} in terms of the Cesaro means of the sequence $u_j(f)$ and $u_j(f)^2$.
\begin{theorem}\label{cesaro1bound}
For an $\theta>0$, there exists a constant $C(\theta)$ such that
\begin{enumerate}
\item[(i)]
\begin{equation}
\mathbb{E}|G_N-H_N| \leq  C(\theta) (  \sigma_N^{1}(|u|) + \sigma_N^{\theta}(|u|) )
\end{equation}

\item[(ii)] and
\begin{equation}\label{cesaro2bound}
\mathbb{E}\left((G_N-H_N)^2\right) \leq C(\theta)  [ \sigma_N^1(|u|)^2+ \sigma_N^1(u^2)
+ \sigma_N^1(|u|)\sigma_N^{\theta}(|u|)+ \sigma_N^{\theta}(u^2) ].
\end{equation}
\end{enumerate}
\end{theorem}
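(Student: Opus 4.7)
The plan is to observe that Theorem~\ref{cesaro1bound} is a direct reformulation of the explicit Feller coupling bounds from Lemma~\ref{asymplk1} and Lemma~\ref{Lemma_G-Hsquared}, rewritten in the language of Cesaro means via Lemma~\ref{cesaroandJ}. The key algebraic facts I would record first are the two identities
\begin{equation*}
\sum_{j=1}^{N} s_j \Psi_N(j) = \frac{N+\theta}{\theta}\,\sigma_N^{\theta}(s), \qquad \sum_{j=1}^{N} s_j = (N+1)\,\sigma_N^{1}(s),
\end{equation*}
the first being a direct restatement of Lemma~\ref{cesaroandJ} and the second being its specialisation at $\theta=1$, where $\Psi_N^{(1)}(j)=\prod_{k=0}^{j-1}\tfrac{N-k}{N-k}\equiv 1$. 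Each raw sum appearing in the two Feller-coupling lemmas is then, up to a bounded prefactor of the form $(N+1)/N$ or $(N+\theta)/N$, exactly a Cesaro mean.

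For part (i), I would start from the bound
\begin{equation*}
\mathbb{E}|G_N-H_N| \leq \frac{C(\theta)}{N}\sum_{j=1}^N|u_j| + \frac{\theta}{N}\sum_{j=1}^N|u_j|\,\Psi_N(j)
\end{equation*}
provided by Lemma~\ref{asymplk1}, apply the two identities term by term, and absorb the resulting bounded prefactors into $C(\theta)$. The first sum becomes $\tfrac{(N+1)}{N}\sigma_N^1(|u|)\leq 2\sigma_N^1(|u|)$, and the second becomes $\tfrac{(N+\theta)}{N}\sigma_N^\theta(|u|)\leq (1+\theta)\sigma_N^\theta(|u|)$, which yields (i).

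For part (ii), the same translation is carried out on each of the four summands on the right-hand side of Lemma~\ref{Lemma_G-Hsquared}. The term $(N^{-1}\sum|u_j|)^2$ becomes $\sigma_N^{1}(|u|)^2$; the term $N^{-1}\sum|u_j|^2$ becomes $\sigma_N^{1}(u^2)$; the cross term $N^{-2}(\sum_j|u_j|)(\sum_k|u_k|\Psi_N(k))$ becomes $\sigma_N^{1}(|u|)\,\sigma_N^{\theta}(|u|)$; and $N^{-1}\sum|u_j|^2\Psi_N(j)$ becomes $\sigma_N^{\theta}(u^2)$. All prefactors that appear are uniformly bounded in $N$ by constants depending only on $\theta$, and can be absorbed into $C(\theta)$ to give exactly the bound \eqref{cesaro2bound}.

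There is no substantive obstacle in this argument: the delicate analytical work is already complete in Lemma~\ref{asymplk1}, Lemma~\ref{Lemma_G-Hsquared} and Lemma~\ref{cesaroandJ}. The only thing requiring attention is the bookkeeping of the $\theta$-dependent prefactors produced by the passage between raw sums $\sum s_j\Psi_N(j)$ and the normalised Cesaro averages $\sigma_N^{\theta}(s)$; these are all bounded uniformly in $N$ and disappear into the constant $C(\theta)$.
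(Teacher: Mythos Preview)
Your proposal is correct and is precisely the approach the paper takes: the paper states in one sentence that the theorem ``is simply a rewriting of Lemma~\ref{asymplk1} and Lemma~\ref{Lemma_G-Hsquared}, using the identification given in Lemma~\ref{cesaroandJ},'' and you have spelled out that rewriting in detail, including the correct bookkeeping of the $(N+1)/N$ and $(N+\theta)/(\theta N)$ prefactors.
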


This theorem is simply a rewriting of Lemma~\ref{asymplk1}, and Lemma~\ref{Lemma_G-Hsquared}, using the identification given in Lemma~\ref{cesaroandJ}.
It implies easily the following results

\begin{theorem}\label{CesaroL1}
If the sequence $(u_j)_{j \geq 1}$ converges in Cesaro $(C,\theta \wedge1)$ sense to 0, then 
\begin{equation}
\lim_{N \to \infty} \mathbb{E}|G_N-H_N| = 0.
\end{equation}

\begin{proof}
By assumption, the sequence converges in $(C,1)$ and in $(C,\theta)$ sense to 0.
Thus, the RHS of the bound given in Theorem~\ref{cesaro1bound} tends to zero, which proves Theorem \ref{CesaroL1}.

\end{proof}
\end{theorem}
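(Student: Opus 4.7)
The plan is direct: combine the $L^1$ bound from Theorem~\ref{cesaro1bound}(i) with the monotonicity property of Cesaro convergence from Lemma~\ref{cesarofacts}(i). No new calculation is needed beyond invoking these two results; essentially all the analytic work has already been done.

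First, I would apply Theorem~\ref{cesaro1bound}(i) to reduce the claim to a purely analytic statement about the two Cesaro means of the sequence $(|u_j|)_{j\geq 1}$: it suffices to prove that
\begin{equation*}
\sigma_N^{1}(|u|) \to 0 \quad \text{and} \quad \sigma_N^{\theta}(|u|) \to 0 \qquad \text{as } N \to \infty,
\end{equation*}
since then the right-hand side of
\begin{equation*}
\mathbb{E}|G_N - H_N| \leq C(\theta)\bigl(\sigma_N^{1}(|u|) + \sigma_N^{\theta}(|u|)\bigr)
\end{equation*}
vanishes in the limit.

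Second, I would observe that $\theta \wedge 1 = \min(\theta,1) \leq 1$ and $\theta \wedge 1 \leq \theta$, so Lemma~\ref{cesarofacts}(i) (convergence in $(C,\theta_1)$ implies convergence in $(C,\theta_2)$ to the same limit whenever $\theta_1 \leq \theta_2$) immediately transfers the hypothesized $(C, \theta \wedge 1)$ convergence of $(|u_j|)$ to zero into simultaneous $(C,1)$ and $(C,\theta)$ convergence to zero. This yields both required limits and completes the argument.

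There is essentially no obstacle here: the hard work was done in establishing the $L^1$ Feller-coupling bound of Lemma~\ref{asymplk1} and its reinterpretation in terms of Cesaro means in Theorem~\ref{cesaro1bound}. The only conceptual point worth flagging is that the hypothesis is to be read as Cesaro convergence of the nonnegative sequence $(|u_j|)$, which is the quantity actually appearing in the bound, rather than of the signed sequence $(u_j)$; with that reading, the proof is a one-line consequence of the cited lemmas.
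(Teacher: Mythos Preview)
Your proposal is correct and follows essentially the same approach as the paper's proof: apply the Cesaro-mean bound of Theorem~\ref{cesaro1bound}(i), then use the monotonicity of Cesaro convergence (Lemma~\ref{cesarofacts}(i)) to pass from $(C,\theta\wedge 1)$ convergence to both $(C,1)$ and $(C,\theta)$ convergence. Your remark that the hypothesis should be read as Cesaro convergence of $(|u_j|)$ rather than of the signed sequence is well taken and consistent with how the paper actually uses the result (cf.\ the hypothesis in Theorem~\ref{Thm1general}).
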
 

Similarly we can get the following result about convergence in $L^2$.

\begin{theorem}\label{CesaroL2}
If the sequences $(|u_j|)_{j \geq 1}$ and $(u_j^2)_{j \geq 1}$ both converge to zero in Cesaro $(C,\theta \wedge1)$, then 
\begin{equation}
\lim_{N \to \infty} \mathbb{E}((G_N-H_N)^2) = 0
\end{equation}

\begin{proof}
By assumption, the sequences  $(|u_j|)_{j \geq 1}$ and $(u_j^2)_{j \geq 1}$ converge in $(C,1)$ and in $(C,\theta)$ sense to 0.
Thus, the RHS of the bound given in \eqref{cesaro2bound} tends to zero, which proves Theorem \ref{CesaroL2}.
\end{proof}
\end{theorem}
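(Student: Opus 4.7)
The plan is to invoke directly the quantitative $L^2$ bound already established in Theorem~\ref{cesaro1bound}(ii), namely
\begin{equation*}
\mathbb{E}\bigl((G_N-H_N)^2\bigr) \leq C(\theta)\Bigl[\sigma_N^1(|u|)^2+\sigma_N^1(u^2)+\sigma_N^1(|u|)\,\sigma_N^\theta(|u|)+\sigma_N^\theta(u^2)\Bigr],
\end{equation*}
and then to check that, under the stated hypothesis, each of the four nonnegative summands on the right-hand side tends to zero as $N\to\infty$. Once that is done, the theorem follows immediately.

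The key observation is that $(C,\theta\wedge 1)$ convergence is automatically the strongest form of Cesaro convergence we need: by Lemma~\ref{cesarofacts}(i), convergence in $(C,\theta\wedge 1)$ to the limit $0$ forces convergence in $(C,\alpha)$ to $0$ for every $\alpha\geq \theta\wedge 1$, and in particular for $\alpha=1$ and $\alpha=\theta$ (since $\theta\wedge 1\leq 1$ and $\theta\wedge 1\leq \theta$). Applying this observation to the sequences $(|u_j|)_{j\geq 1}$ and $(u_j^2)_{j\geq 1}$, we obtain
\begin{equation*}
\sigma_N^1(|u|)\to 0,\qquad \sigma_N^\theta(|u|)\to 0,\qquad \sigma_N^1(u^2)\to 0,\qquad \sigma_N^\theta(u^2)\to 0.
\end{equation*}
Since each of the four terms in the bound above is a product or square of such Cesaro means, each term vanishes in the limit, and so does their sum. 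This yields $\mathbb{E}((G_N-H_N)^2)\to 0$, as required.

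There is essentially no obstacle here, since all the work has already been absorbed into Theorem~\ref{cesaro1bound}(ii) and Lemma~\ref{cesarofacts}(i); the proof reduces to an inspection of the four terms. The only mild subtlety worth flagging is that the bound naturally splits into two regimes according to whether $\theta\geq 1$ or $\theta<1$: when $\theta\geq 1$, the hypothesis is simply $(C,1)$ convergence of $|u|$ and $u^2$, and the $\sigma_N^\theta$ contributions are automatically controlled via Lemma~\ref{cesarofacts}(i); when $\theta<1$, the stronger assumption of $(C,\theta)$ convergence is really used for the second and fourth terms, since these involve $\Psi_N(j)$ weights which decay only like those appearing in the Cesaro means of order $\theta$. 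In either case the conclusion is the same, which justifies the compact statement $(C,\theta\wedge 1)$ in the hypothesis.
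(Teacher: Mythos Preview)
Your proof is correct and follows essentially the same route as the paper: both apply the bound \eqref{cesaro2bound} from Theorem~\ref{cesaro1bound}(ii) and observe that $(C,\theta\wedge 1)$ convergence of $|u|$ and $u^2$ implies, via Lemma~\ref{cesarofacts}(i), that all four Cesaro means $\sigma_N^1(|u|)$, $\sigma_N^\theta(|u|)$, $\sigma_N^1(u^2)$, $\sigma_N^\theta(u^2)$ tend to zero. You simply make explicit the appeal to Lemma~\ref{cesarofacts}(i) and the case split on $\theta$ that the paper leaves implicit.
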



\section{Proof of Theorem \ref{Thm1general} and Theorem \ref{Thm}}

\subsection{A simple convergence result for series of Poisson random variables} \ \newline

We give here a result of convergence in distribution for the random variables
\begin{equation}\label{H}H_N(f)=\sum_{j=1}^NW_ju_j(f),\end{equation} 
to an infinitely divisible law. This result is elementary since it only uses the fact that the random variables $W_j$'s are independent and Poisson.

\begin{lemma}\label{lk}
    Under the assumption \eqref{Hyp1}, i.e 
    \begin{equation}
    \sum_{j=1}^N jR_j^2 \in (0,\infty),
    \end{equation}
    the distribution $\mu_N$ of $H_N-\mathbb{E}[H_N]$ 
    converges weakly to the distribution $\mu_{f,\theta}$ defined by \eqref{mufourier}.

    \begin{proof} 
    The Fourier transform of $H_N-\mathbb{E}[H_N]$ is easy to compute, indeed:

    \begin{eqnarray}
       \log\widehat{\mu}_N(t)=\log\mathbb{E}\left[e^{it(H_N-\mathbb{E}[H_N])}\right]&=&\log\prod_{j=1}^N\mathbb{E}\left[\exp\left(itu_j\left(W_j-\frac \theta j\right)\right)\right]\nonumber\\ 
        &=& \sum_{j=1}^N\frac\theta j(e^{itu_j}-itu_j-1).
    \end{eqnarray}
  
  Obviously, for $|t|\leq T$, 
  
  \begin{equation}\left|\frac\theta j\left(e^{itu_j}-itu_j-1\right)\right|\leq\frac \theta j\frac{t^2u_j^2}{2}\leq\theta\frac{T^2}2\frac{u_j^2}{j}.\end{equation}
 By \ref{Hyp}, $\log\widehat{\mu}_N(t)$ converges absolutely uniformly and its limit
  
 \begin{equation}\psi(t)= \sum_{j=1}^\infty\frac\theta j(e^{itu_j}-itu_j-1)\end{equation}
 
 is continuous. By L\'{e}vy's Theorem, $\exp(\psi(t))$ is the Fourier transform of the probability measure $\mu_{f,\theta}$ and $\mu_N$ converges in distribution to $\mu_{f,\theta}$ as $N$ goes to infinity.  
 \end{proof}
\end{lemma}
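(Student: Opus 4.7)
The plan is to proceed entirely at the level of characteristic functions, exploiting the fact that $H_N$ is a finite sum of independent Poisson-type terms. Write $\hat{\mu}_N(t) = \mathbb{E}[\exp(it(H_N - \mathbb{E}[H_N]))]$. By independence of the $W_j$'s,
\begin{equation}
\hat{\mu}_N(t) = \prod_{j=1}^{N} \mathbb{E}\Bigl[\exp\bigl(itu_j(W_j - \theta/j)\bigr)\Bigr].
\end{equation}
The first step is then to use the explicit Poisson characteristic function $\mathbb{E}[e^{isW_j}] = \exp((\theta/j)(e^{is}-1))$ with $s = tu_j$, so that each factor is $\exp((\theta/j)(e^{itu_j} - 1 - itu_j))$, and hence
\begin{equation}
\log \hat{\mu}_N(t) = \sum_{j=1}^{N} \frac{\theta}{j}\bigl(e^{itu_j} - 1 - itu_j\bigr).
\end{equation}

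The second step is to pass to the $N \to \infty$ limit. The elementary inequality $|e^{is} - 1 - is| \leq s^2/2$ gives the termwise bound
\begin{equation}
\left|\frac{\theta}{j}(e^{itu_j} - 1 - itu_j)\right| \leq \frac{\theta t^2}{2} \cdot \frac{u_j^2}{j} = \frac{\theta t^2}{2} \cdot jR_j(f)^2.
\end{equation}
Under hypothesis \eqref{Hyp1}, $\sum_{j} jR_j(f)^2 < \infty$, so the series defining $\log\hat{\mu}_N(t)$ converges absolutely and uniformly for $t$ in any compact interval. The limit
\begin{equation}
\psi(t) = \sum_{j=1}^{\infty} \frac{\theta}{j}(e^{itu_j} - 1 - itu_j)
\end{equation}
is therefore continuous (as a uniform limit of continuous functions), and inspection shows it matches $\log \hat{\mu}_{f,\theta}(t)$ obtained by integrating against the Lévy measure $M_f$.

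The final step is to invoke Lévy's continuity theorem: $\hat{\mu}_N(t) \to e^{\psi(t)}$ pointwise with $e^{\psi(\cdot)}$ continuous at $0$, hence $\mu_N$ converges weakly to the probability measure $\mu_{f,\theta}$ with characteristic function $e^{\psi}$. I anticipate no real obstacle here — the argument is purely structural and relies only on the Poisson characteristic function and the square-summability condition \eqref{Hyp1}; the only point one needs to be careful about is that the limit $e^{\psi(t)}$ is \emph{a priori} just a continuous function, and one must check continuity at $0$ to legitimately identify it as the characteristic function of a probability measure, which is immediate from the uniform bound above since each summand vanishes at $t=0$.
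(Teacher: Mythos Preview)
Your proposal is correct and follows essentially the same approach as the paper's proof: compute the characteristic function via independence and the Poisson formula, bound each summand by $\frac{\theta t^2}{2}\,u_j^2/j$ using $|e^{is}-1-is|\le s^2/2$, deduce absolute uniform convergence from \eqref{Hyp1}, and conclude via L\'evy's continuity theorem. The only difference is cosmetic --- you spell out the identification $u_j^2/j = jR_j(f)^2$ and the continuity-at-zero check explicitly, but the argument is the same.
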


Obviously, $\mu_{f,\theta}$ is an infinitely divisible distribution and its L\'{e}vy-Khintchine representation is easy to write. We recall that an infinitely divisible distribution $\mu$ has L\'{e}vy-Khintchine representation $(a, M, \sigma^2)$ if its Fourier transform is given by 
\begin{equation}\label{levykhintchine}
\widehat{\mu}(t)=\exp\left(\int \left(e^{itx}-1-\frac{itx}{1+x^2}\right)dM(x)+iat-\frac 12 \sigma^2t^2\right),
\end{equation}
where $a\in\mathbb{R}$, $\sigma>0$ and $M$ is an admissible Levy measure, i.e. $$\int \frac{x^2}{1+x^2}dM(x)<\infty.$$

The distribution $\mu_{f,\theta}$ in Lemma~\ref{lk} has therefore a L\'{e}vy-Khintchine representation $(a, \theta M, 0)$ with
\begin{equation} a=\int \left(\frac{x}{1+x^2}-x\right)dM(x)=\sum_{j=1}^\infty\lambda_j\left(\frac{u_j}{1+u_j^2}-u_j\right)\end{equation} and \begin{equation}M=\sum_{j=1}^\infty \frac1j\delta_{u_j}.\end{equation}
It is easy to see that the assumption \ref{Hyp} implies that $\int x^2dM(x)<\infty$ so that $M$ is admissible.

\subsection{Proof of Theorem \ref{Thm1general}} \ \newline

We proceed now to the proof of Theorem \ref{Thm1general}, by using the Feller coupling bounds proved in Section 4. \\

We first prove the first and second statements of Theorem \ref{Thm1general}.
Under the assumption that $\sum_{j=1}^{\infty} jR_J^2 < \infty$, we have seen that the sequence $|u_j|$ converges in $(C,1)$ sense to zero. Moreover, if $\theta <1$, we assumed in Theorem~\ref{Thm1general} that the sequence $|u_j|$ converges in $(C,\theta)$ sense to zero. Thus, we know that the assumption of Theorem~\ref{CesaroL1} is satisfied, and thus that, 
\begin{equation}\label{justela2}
\lim_{N \to \infty} \mathbb{E}|G_N-H_N| = 0.
\end{equation}
Using now Lemma~\ref{lk}, we have proved that $G_N - \mathbb{E}(G_N)$ converges in distribution to $\mu_{f,\theta}$ defined by \eqref{mufourier}.
But, by the basic identity \eqref{RandI}, we know that $I_{\sigma,N}(f)-\mathbb{E}[I_{\sigma,N}(f)]$ has the same distribution as $G_N(f)-\mathbb{E}[G_N(f)]$. This proves the first two statements of Theorem~\ref{Thm1general}.\\

The third statement is simple. Indeed, by \eqref{RandI} and by \eqref{justela2},
\begin{equation}
\mathbb{E}[I_{\sigma,N}(f)] = N \int_0^1 f(x)dx + \mathbb{E}(G_N)= N \int_0^1 f(x)dx + \mathbb{E}(H_N) +o(1).
\end{equation}
In order to complete the proof, it suffices to mention that the expectation of $H_N$ is easy to compute:
\begin{equation}\label{meanH}
\mathbb{E}(H_N) = \theta \sum_{j=1}^N \frac{u_j}{j} = \theta \sum_{j=1}^N R_j.
\end{equation}
This proves the third statement of Theorem~\ref{Thm1general}.\\

The proof of the fourth statement follows a similar pattern. Again, by \eqref{RandI}, 
\begin{equation}
\Var[I_{\sigma,N}(f)] =\Var(G_N).
\end{equation}
But if one also assumes, as in the fourth item of Theorem~\ref{Thm1general}, that the sequence $(u_j^2)$ converges in $(C,1 \wedge \theta)$ sense to zero, then by \ref{CesaroL2}
we know that
\begin{equation}
\lim_{N \to \infty} \mathbb{E}((G_N-H_N)^2) = 0.
\end{equation}
This, and \ref{justela2}, imply that
\begin{equation}
\Var(G_N) = \Var(H_N) + o(1)
\end{equation}
In order to complete the proof, it suffices to compute the variance of $H_N$:
\begin{equation}
\Var (H_N) = \theta \sum_{j=1}^N \frac{u_j^2}{j} = \theta \sum_{j=1}^N jR_j^2
\end{equation}
This proves the fourth statement  and completes the proof of of Theorem \ref{Thm1general}.

\subsection{Proof of Theorem \ref{Thm}}\ \newline

We show here how Theorem \ref{Thm1general} implies Theorem~\ref{Thm}.

We will need the following simple facts.
\begin{lemma}\label{simplefacts} 
\begin{enumerate}
\item[(i)]
The assumption $\sum_{j=1}^{\infty}jR_j(f)^2 < \infty$ implies the $(C,1)$ convergence of the sequence $(|u_j(f)|)_{j\geq1}$ to zero.

\item[(ii)]
If one assumes that $\sum_{j=1}^{\infty}jR_j(f)^2 < \infty$ and that the function $f$ is of bounded variation, then the sequence $(|u_j(f)|)_{j\geq1}$ converges in $(C,\theta)$ sense to zero, for any $\theta>0$. 
\item[(iii)]
If one assumes that $\sum_{j=1}^{\infty}jR_j(f)^2 < \infty$ and that the function $f$ is of bounded variation, then the sequence $(u_j(f)^2)_{j\geq1}$ converges in $(C,\theta)$ sense to zero, for any $\theta>0$. 
\end{enumerate}
\begin{proof}

The first item is well known (see statement (a), p. 79 of \cite{zygmund}, Volume 1). It is a consequence of the simple application of the Cauchy-Schwarz inequality
\begin{equation}
|\frac1N \sum_{j=1}^{N} u_j| \leq \frac1N (\sum_{j=1}^{N}jR_j^2)^{\frac12}(\sum_{j=1}^{N}j)^{\frac12} \leq (\sum_{j=1}^{\infty}jR_j^2)^{\frac12}.
\end{equation}
So that 
\begin{equation}\label{justela}
\limsup_{N\to\infty} |\frac1N \sum_{j=1}^{N} u_j| \leq  (\sum_{j=1}^{\infty}jR_j^2)^{\frac12}.
\end{equation}
But the LHS of \eqref{justela} does not depend on the initial $k$ values of the sequence $u_j$. By setting these $k$ values to zero, and by taking $k$ large enough, we can then make the RHS as small as we want. This implies that 
\begin{equation}
\lim_{N\to\infty} \frac1N \sum_{j=1}^{N} u_j=0.
\end{equation}
This is the $(C,1)$ convergence to zero, claimed in item (i).

In order to prove the item(ii), we need the following observation.

\begin{lemma}\label{bv}
If the function f is of bounded variation, then 
\begin{equation}
|R_j(f)| \leq \frac{TV(f)}{j},
\end{equation}
where $TV(f)$ denotes the total variation of $f$.

\begin{proof}
Since $f$ is of bounded variation, it can be written as a difference of two non-decreasing functions
\begin{equation}
f=f^+ - f^-.
\end{equation}

Using \eqref{Rj},
\begin{equation}
R_j(f^+)= \sum_{k=0}^{j-1} \int_{\frac kj}^{\frac{k+1}{j}}  ( f^+ (\frac kj)- f^+(x)) dx. 
\end{equation}
So that
\begin{equation}
|R_j(f^+)| \leq \frac{1}{j} \sum_{k=0}^{j-1} \left( f^+(\frac{k+1}{j}) - f^+(\frac kj) \right) \leq     \frac{1}{j} TV(f^+).
\end{equation}
Using the same argument for $f^-$ gives the result of Lemma~\ref{bv}.
\end{proof}
\end{lemma}

So, this shows that the sequence $(u_j(f))_{j\geq1}$ is bounded, when $f$ is of bounded variation.
Now, using the item (i) of this Lemma and item (iii) of Lemma~\ref{cesarofacts}, we see that the sequence $(|u_j(f)|)_{j\geq1}$ and thus, $u_j(f)$ converges in $(C,\theta)$ sense to zero, for any value of $\theta>0$.

The last item is trivial, since the sequence $u(f)= (u_j(f))_{j\geq1}$ is bounded, say by the constant C. Indeed, then the Cesaro means of the sequence $u(f)^2=(u_j(f)^2)_{j\geq1}$ are bounded, for any $\theta>0$ by
\begin{equation}
\sigma_N^{\theta}(u(f)^2) \leq C \sigma_N^{\theta}(|u(f)|).
\end{equation}
This implies the $(C,\theta)$ convergence of the sequence $u(f)^2$.
\end{proof}
\end{lemma}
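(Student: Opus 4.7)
The plan is to prove the three items of Lemma~\ref{simplefacts} in sequence. Item (i) needs nothing beyond Cauchy--Schwarz plus a truncation trick; items (ii) and (iii) then follow by combining item (i) with a total-variation bound on $R_j(f)$ and item (iii) of Lemma~\ref{cesarofacts}, which promotes Cesaro convergence of a bounded sequence to all orders $\theta>0$.

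For item (i), I want to show $\sigma_N^{1}(|u(f)|) = N^{-1}\sum_{j=1}^{N} |jR_j(f)| \to 0$. A single application of Cauchy--Schwarz gives
\[
\frac{1}{N}\sum_{j=1}^{N}|jR_j(f)| \leq \frac{1}{N}\Bigl(\sum_{j=1}^{N} jR_j(f)^2\Bigr)^{1/2}\Bigl(\sum_{j=1}^{N} j\Bigr)^{1/2} \leq \Bigl(\sum_{j=1}^{\infty} jR_j(f)^2\Bigr)^{1/2},
\]
which only yields boundedness. The upgrade to convergence uses that the limiting value of a Cesaro average is insensitive to any initial segment: for any fixed $K$, the contribution of $j \leq K$ to $\sigma_N^1(|u|)$ is $O(K/N)$, while applying the same Cauchy--Schwarz estimate to the tail yields $(\sum_{j>K} jR_j(f)^2)^{1/2}$, which can be made arbitrarily small by taking $K$ large. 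Taking $N\to\infty$ first and then $K\to\infty$ concludes.

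The bridge to items (ii) and (iii) is the estimate $|R_j(f)| \leq TV(f)/j$ for $f$ of bounded variation, equivalently $|u_j(f)| \leq TV(f)$. I would prove this by Jordan-decomposing $f = f^+ - f^-$ into monotone nondecreasing pieces on $[0,1]$, rewriting
\[
R_j(f^{\pm}) = \sum_{k=0}^{j-1}\int_{k/j}^{(k+1)/j}\bigl(f^{\pm}(k/j)-f^{\pm}(x)\bigr)\,dx,
\]
bounding each integrand in absolute value by $f^{\pm}((k+1)/j)-f^{\pm}(k/j)$, and telescoping the $k$-sum to $TV(f^{\pm})$. This makes $(u_j(f))_{j\geq1}$ uniformly bounded, so item (i) combined with Lemma~\ref{cesarofacts}(iii) immediately gives $(C,\theta)$-convergence of $(|u_j(f)|)$ to zero for all $\theta>0$, proving item (ii). For item (iii), the pointwise inequality $u_j(f)^2 \leq TV(f)\cdot |u_j(f)|$ yields $\sigma_N^{\theta}(u(f)^2) \leq TV(f)\,\sigma_N^{\theta}(|u(f)|)$, and the right-hand side tends to zero by (ii).

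The only genuinely nontrivial point is the truncation argument in item (i); once that is in place, the rest is mechanical bookkeeping with the BV estimate and the general fact in Lemma~\ref{cesarofacts}(iii). In particular, no analytic ingredient beyond Cauchy--Schwarz and the Jordan decomposition is needed, and the whole lemma should be a short page.
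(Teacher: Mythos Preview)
Your proposal is correct and follows essentially the same route as the paper: Cauchy--Schwarz plus the truncation (first-$K$-terms) trick for item (i), the Jordan decomposition yielding $|R_j(f)|\le TV(f)/j$ to get boundedness of $(u_j)$, and then Lemma~\ref{cesarofacts}(iii) together with the pointwise inequality $u_j^2\le C|u_j|$ for items (ii) and (iii). The only cosmetic difference is that the paper phrases the truncation as ``the limsup is unchanged under zeroing out the first $k$ terms,'' while you split the Cesaro sum explicitly into head and tail; these are the same argument.
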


Thus, Lemma~\ref{simplefacts} shows that Theorem~\ref{Thm1general} implies Theorem~\ref{Thm}.
Indeed, the general assumptions needed in Theorem~\ref{Thm1general} about the Cesaro convergence of the sequences $u(f)$ and $u(f)^2$ are satisfied by 
Lemma~\ref{simplefacts}.

\section{Proof of Theorem \ref{Thm2general} and Theorem \ref{Thm2}}

\subsection{A simple Gaussian convergence result for series of Poisson random variables} \ \newline

We give here a result of convergence in distribution for the random variables
\begin{equation}\label{H}H_N(f)=\sum_{j=1}^NW_ju_j(f)\end{equation} 
to a Gaussian law, once centered and normalized.This result is again elementary since it only uses the fact that the random variables $W_j$'s are independent and Poisson.

 Here, we assume as in Theorem \ref{Thm2general} that
\begin{equation}\label{Hyp2}
\sum_{j=1}^{\infty}jR_j(f)^2 = \infty
\end{equation}
and that 
\begin{equation}\label{assumption_max_bd}
\max_{1\leq j \leq N} |u_j|=\max_{1\leq j \leq N} |jR_j| = o(( \sum_{j=1}^{N}jR_j(f)^2)^{\frac12} ).
\end{equation}

Let us denote by
\begin{equation}
\eta_N^2= \Var(H_N) = \theta \sum_{j=1}^{N}jR_j(f)^2.
\end{equation}

\begin{lemma}\label{gaussian}
Under these assumptions, the distribution of
$$\frac{H_N-\mathbb{E}[H_N]}{\eta_N}$$ 
converges weakly to $\mathcal{N}(0,1)$ as $N\rightarrow\infty$.

\begin{proof}
 Write $\widetilde{H}_N$ for

\begin{equation}
\frac{H_N-\mathbb{E}[H_N]}{\eta_N}=\frac{H_N-\mathbb{E}[H_N]}{\sqrt{Var[H_N]}}=\sum_{j=1}^N\frac{u_j(W_j-(\theta/j))}{\eta_N},
\end{equation}

then
\begin{equation}
 \log\mathbb{E}\left[e^{it\widetilde{H}_N}\right]= \sum_{j=1}^N\frac\theta j(e^{itu_j/\eta_N}-itu_j/\eta_N-1),
\end{equation}
which gives the distribution of $\widetilde{H}_N$ in its L\'{e}vy-Khintchine representation $(a_N, \theta M_N,\sigma_N^2)$, with 

\begin{equation}a_N=\sum_{j=1}^N\left(\frac{\theta}{j(\eta_N^2+u_j^2)}\left(\frac{-u_j^3}{\eta_N}\right)\right),\end{equation}
\begin{equation}M_N=\sum_{j=1}^N\frac 1j\delta_{u_j/\eta_N}\end{equation} and $\sigma_N=0$.

 We continue this proof by applying the L\'{e}vy-Khintchine Convergence Theorem (\cite{varadhan}, p. 62):

Consider a bounded continuous function $f$ such that $f(x)=0$ for $|x|<\delta$, then
\begin{equation}
\int fdM_N=\sum_{j=1}^N\frac\theta j f\left(\frac{u_j}{\eta_N}\right)\mathbbm{1}_{\left|\frac{u_j}{\eta_N}\right|>\delta}.
\end{equation} 
Under the assumption \eqref{assumption_max_bd}, $\int fdM_N=0$ for $N$ large enough, so that

\begin{equation}\label{lkconv1}\lim_{N\rightarrow\infty}\int fdM_N=\int fdM=0.\end{equation}

Again, using the assumption \eqref{assumption_max_bd}, we have that for any $\ell>0$, 
\begin{equation}\int_{-\ell}^\ell x^2dM_N+\sigma_N^2=\sum_{j=1}^N\frac\theta j\frac{u_j^2}{\eta_N^2}\mathbbm{1}_{|u_j|<\ell\eta_N}.\end{equation}
So for $N$ large enough, $\int_{-\ell}^\ell x^2dM_N=1$ and so
\begin{equation}\label{lkconv2}\lim_{N\rightarrow\infty}\int_{-\ell}^\ell x^2dM_N=1.\end{equation}

Moreover, for every $N$ define 
\begin{equation}\label{eps}\epsilon_N= \frac{\max_{1\leq j\leq N}|u_j|}{\eta_N},\end{equation}

then we can bound $a_N$ above by 
\begin{equation}
|a_N|\leq\sum_{j=1}^N\left(\frac{\theta/j}{\eta_N^2+u_j^2}(\epsilon_Nu_j^2)\right)\leq\frac{\epsilon_N}{\eta_N^2}\sum_{j=1}^N\frac\theta ju_j^2=\epsilon_N.\end{equation} 
By the assumption \eqref{assumption_max_bd}, we see that \begin{equation}\label{lkconv3}\lim_{N\rightarrow\infty}a_N=0.\end{equation}

By \eqref{lkconv1}, \eqref{lkconv2}, \eqref{lkconv3} and using Theorem 3.21, p. 62 in \cite{varadhan}, we see that $\widetilde{H}_N$ converges in distribution to the infinitely divisible distribution with L\'{e}vy-Khintchine representation $(a, M, \sigma^2)=(0,0,1)$, i.e. to the standard normal Gaussian $\mathcal{N}(0,1)$.

\end{proof}
\end{lemma}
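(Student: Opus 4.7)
The plan is to prove this by direct pointwise convergence of characteristic functions, exploiting the explicit Poisson structure. Since the $W_j$ are independent Poisson variables with mean $\theta/j$, the log-characteristic function of $\widetilde{H}_N$ factors and yields
\[
\log\mathbb{E}[e^{it\widetilde{H}_N}] \;=\; \sum_{j=1}^N \frac{\theta}{j}\Bigl(e^{itu_j/\eta_N} - 1 - \frac{itu_j}{\eta_N}\Bigr).
\]
I would isolate the quadratic part of the exponential via the standard pointwise bound
\[
\Bigl|e^{iy} - 1 - iy + \tfrac{y^2}{2}\Bigr| \;\leq\; \frac{|y|^3}{6}, \qquad y\in\mathbb{R}.
\]

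The quadratic contribution, summed against the weights $\theta/j$, is exactly
\[
-\frac{t^2}{2}\cdot\frac{1}{\eta_N^2}\sum_{j=1}^N \frac{\theta\,u_j^2}{j} \;=\; -\frac{t^2}{2},
\]
by the definition of $\eta_N^2$. It remains to show the cubic remainder tends to zero. Writing $\epsilon_N := \max_{1\le j\le N}|u_j|/\eta_N$, the remainder is bounded by
\[
\frac{|t|^3}{6}\sum_{j=1}^N \frac{\theta}{j}\cdot\frac{|u_j|^3}{\eta_N^3}
\;\leq\; \frac{|t|^3}{6}\cdot\epsilon_N \cdot \frac{1}{\eta_N^2}\sum_{j=1}^N\frac{\theta\,u_j^2}{j}
\;=\; \frac{|t|^3\epsilon_N}{6},
\]
which vanishes as $N\to\infty$ by the hypothesis $\max_{1\le j\le N}|jR_j| = o(\eta_N)$. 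Hence $\log\mathbb{E}[e^{it\widetilde{H}_N}] \to -t^2/2$ pointwise, and L\'evy's continuity theorem gives convergence in distribution to $\mathcal{N}(0,1)$.

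There is no serious obstacle here; the only point requiring a little care is that the cubic Taylor bound is most useful when $|y|$ is small, but this is automatic since for any fixed $t$ the quantity $|tu_j/\eta_N| \leq |t|\epsilon_N$ becomes uniformly small in $j$ as $N$ grows. One could equally invoke the L\'evy--Khintchine convergence theorem (Varadhan, Theorem 3.21) by verifying the three standard conditions on $(a_N,\theta M_N,\sigma_N^2)$, but the direct characteristic-function route above is shorter and more transparent since only a Gaussian limit is sought.
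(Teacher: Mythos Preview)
Your proof is correct and takes a genuinely different, more elementary route than the paper's. The paper computes the same log-characteristic function but then casts it in its L\'evy--Khintchine form $(a_N,\theta M_N,0)$ and invokes the L\'evy--Khintchine convergence theorem (Varadhan, Theorem~3.21), checking separately that the L\'evy measures $M_N$ converge vaguely to zero away from the origin, that the truncated second moments converge to $1$, and that $a_N\to 0$. You instead apply the global Taylor bound $|e^{iy}-1-iy+y^2/2|\le |y|^3/6$ directly, observe that the quadratic piece reproduces $-t^2/2$ exactly by the definition of $\eta_N^2$, and bound the cubic remainder by $|t|^3\epsilon_N/6$. Your route is shorter and entirely self-contained, requiring only L\'evy's continuity theorem rather than the heavier convergence criterion for infinitely divisible triplets; the paper's approach has the advantage of situating the argument within the same L\'evy--Khintchine framework used in the bounded-variance case, making the dichotomy between Gaussian and non-Gaussian limits structurally transparent. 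One small remark: your parenthetical about the cubic bound being ``most useful when $|y|$ is small'' is unnecessary, since the inequality $|e^{iy}-1-iy+y^2/2|\le |y|^3/6$ holds for all real $y$, so no smallness of $tu_j/\eta_N$ is needed for the bound itself---only for the conclusion that the remainder vanishes.
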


\subsection{Proof of Theorem \ref{Thm2general}} \ \newline

We proceed now to the proof of Theorem~\ref{Thm2general}, by using the Feller coupling bounds proved in Section 4. 
Again, we assume here, as in Theorem~\ref{Thm2general}, that
\begin{equation}\label{Hyp2}
\sum_{j=1}^{\infty}jR_j(f)^2 = \infty
\end{equation}
and that 
\begin{equation}
\max_{1\leq j \leq N} |jR_j| = o(( \sum_{j=1}^{N}jR_j(f)^2)^{\frac12} )
\end{equation}
which can be rewritten
\begin{equation} 
\max_{1\leq j \leq N} |u_j| = o(\eta_N )
\end{equation}

By Lemma \ref{L1boundwithmax}, we know that

\begin{equation}
\mathbb{E}(|G_N-H_N|) \leq C(\theta) \max_{1\leq j\leq N} |u_j| = o(\eta_N)
\end{equation}

Again, denote by $\widetilde{H}_N:= \frac{H_N - \E(H_N)}{\eta_N}$ and $\widetilde{G}_N:=\frac{ G_N - \E(G_N)}{\eta_N}$.
 Then obviously,
 \begin{equation}
 \E(|\widetilde{G}_N - \widetilde{H}_N)|) = o(1),
 \end{equation}
 which, together with the convergence result (Lemma~\ref{gaussian}) for $H_N$ proves that  $\widetilde{G}_N$ converges in distribution to a standard Gaussian law N(0,1).\\
  
 Moreover, this also proves that 
 \begin{equation}
 \E(G_N) = \E(H_N)) + o(\eta_N)= \theta \sum_{j=1}^N R_j +o(\eta_N).
 \end{equation}
 
And, by Lemma \ref{L2boundwithmax}, we also know that

\begin{equation}
\mathbb{E}((G_N-H_N)^2) \leq C(\theta) \max_{1\leq j\leq N} u_j^2 =o(\eta_N^2).
\end{equation}

 Then obviously,
 \begin{equation}
 \E((\widetilde{G}_N - \widetilde{H}_N)^2) \leq 2 ( \E((G_N-H_N)^2) + \E(G_N - H_N)^2) = o(\eta_N^2).
 \end{equation}
 So that
 \begin{equation}
 | \sqrt{\Var(G_N)}- \sqrt{\Var(H_N)}| = o(\eta_N)
 \end{equation}
 and thus, 
 \begin{equation}
 \Var(G_N) \sim \Var(H_N)= \eta_N^2.
 \end{equation}
 From these three results, we get that $$\frac{G_N - \E(G_N)}{\sqrt{\Var(G_N)}}$$ converges in distribution to a standard Gaussian $\mathcal{N}(0,1)$. But $I_{\sigma, N}(f) - \E(I_{\sigma, N}(f))$ has the same distribution as $G_N - \E(G_N)$ and thus, $$\frac{I_{\sigma, N} - \E(I_{\sigma, N})}{\sqrt{\Var(I_{\sigma, N})}}$$ converges also in distribution to a $\mathcal{N}(0,1)$ distribution. We thus have proved the first statement of Theorem \ref{Thm2general}.
 Moreover, we have that
  \begin{equation}
\E(I_N)=\int_0^1f(x)dx  + \E(G_N) =\int_0^1f(x)dx + \theta \sum_{j=1}^N R_j +o(\eta_N),
 \end{equation}
 which is the second statement of Theorem \ref{Thm2general}.
 Finally,
  \begin{equation}
 \Var(I_{\sigma, N})= \Var(G_N) \sim \Var(H_N)= \eta_N^2,
 \end{equation}
 which is the third statement. We have completed the proof of Theorem \ref{Thm2general}.
 
 \subsection{Proof of Theorem \ref{Thm2}}\ \newline
 
We prove here how Theorem~\ref{Thm2general} implies Theorem~\ref{Thm2}.
In Theorem \ref{Thm2} we assumed that $f$ is of bounded variation, which implies, as we have seen, that $u_j(f) = O(1)$, and thus, that
\begin{equation}
 \max_{1\leq j \leq N} |u_j| = o(\eta_N)
 \end{equation} 
 since the sequence $\eta_N$ is assumed to diverge. 
 This proves that the hypothesis of Theorem~\ref{Thm2} are satisfied under those of Theorem~\ref{Thm2general}.
 Thus we get that the conclusions of Theorem~\ref{Thm2general} are valid. They are almost exactly the same as the conclusions of Theorem \ref{Thm2}.
 The only thing left to prove is the item (ii). But  using Lemma \ref{L1boundwithmax},  \ref{meanH}, and the fact that $u_j(f) = jR_j(f)=O(1)$, we have that
 \begin{equation}
 \E[{I_{\sigma, N}}(f)]= N\int_0^1 f(x)dx+\sum_{j=1}^NR_j(f) + O(1).
 \end{equation}
The bound
\begin{equation}
\sum_{j=1}^NR_j(f) = O( \log N)
\end{equation}
is trivial since again $ R_j = O(\frac{1}{j})$.
With this we have derived Theorem~\ref{Thm2} from Theorem~\ref{Thm2general}.


\section{The expectation and the variance}
\label{expectation_and_variance}
For the sake of completeness, we give here the explicit expressions for the expectation and the variance of $I_{\sigma, N}$, when $\sigma$ is chosen from $\mathcal{S}_N$ by the Ewens distribution with parameter $\theta$. The basic computations for the expectation and the variance of the cycle counts can be simply derived by the following formula established by Watterson \cite{watterson} (see Arratia, Barbour and Tavar\'{e} \cite{arratiabarbourtavare}, (4.7), p. 68): \\
For every $b\geq 1$, $(r_1,\dots,r_b)\geq 0$, 
\begin{equation}\label{generalformula}
\mathbb{E}\left[\prod_{j=1}^b\alpha_j^{[r_j]}\right]=\mathbbm{1}_{m\leq N}\binom{N-m+\gamma}{N-m}\binom{N+\gamma}{N}^{-1}\prod_{j=1}^b\left(\frac \theta j\right)^{r_j},
\end{equation}
where $m=\sum_{j=1}^bjr_j$, $x^{[r]}=x(x-1)\dots(x-r+1)$ and $\gamma=\theta-1$. \\
Thus, the mean and the variance of $I_{\sigma, N}$ can be easily computed.

\begin{lemma}
\begin{eqnarray}\label{mean}
\mathbb{E}[I_{\sigma,N}(f)]&=&N\int_0^1f(x)dx+\sum_{j=1}^N\mathbb{E}[\alpha_j(\sigma)]u_j(f)\nonumber\\
&=&N\int_0^1f(x)dx+\theta\sum_{j=1}^N\Psi_N(j)R_j(f)\nonumber\\
\end{eqnarray}

\begin{proof}
From the general formula \eqref{generalformula}, we easily see that for any $j\geq 1$ and any $\theta>0$,
\begin{equation}
\mathbb{E}_\theta[\alpha_j]=\frac \theta j \Psi_N(j)\mathbbm{1}_{j\leq N}.
\end{equation}
Thus, \eqref{mean} follows immediately.
\end{proof}
\end{lemma}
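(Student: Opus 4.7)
The proof should be a straightforward application of the basic identity \eqref{RandI} combined with the Watterson formula \eqref{generalformula}, so there is no real obstacle to overcome — the work is mostly bookkeeping.

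The plan is to start from the decomposition
\begin{equation*}
I_{\sigma,N}(f)= N\int_0^1 f(x)\,dx + \sum_{j=1}^N \alpha_j(\sigma)\, jR_j(f),
\end{equation*}
which was established as \eqref{RandI}, and to take expectations termwise. Since the first summand is deterministic and $u_j(f)=jR_j(f)$ by definition, this immediately gives the first equality
\begin{equation*}
\mathbb{E}[I_{\sigma,N}(f)]= N\int_0^1 f(x)\,dx + \sum_{j=1}^N \mathbb{E}[\alpha_j(\sigma)]\, u_j(f).
\end{equation*}

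To obtain the second equality, I would compute $\mathbb{E}[\alpha_j(\sigma)]$ by specializing Watterson's identity \eqref{generalformula}. Take $b=j$ and choose $r_j=1$ with $r_i=0$ for $i\neq j$; then the falling factorial $\alpha_j^{[1]}$ equals $\alpha_j$, and the parameter $m=\sum_i i r_i$ reduces to $j$. The formula then yields
\begin{equation*}
\mathbb{E}[\alpha_j(\sigma)] = \mathbbm{1}_{j\leq N}\binom{N-j+\gamma}{N-j}\binom{N+\gamma}{N}^{-1}\cdot\frac{\theta}{j}
= \frac{\theta}{j}\,\Psi_N(j)\,\mathbbm{1}_{j\leq N},
\end{equation*}
using the definition \eqref{definition_psi} of $\Psi_N(j)$. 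Substituting back produces
\begin{equation*}
\mathbb{E}[I_{\sigma,N}(f)]= N\int_0^1 f(x)\,dx + \sum_{j=1}^N \frac{\theta}{j}\Psi_N(j)\cdot jR_j(f)
= N\int_0^1 f(x)\,dx + \theta \sum_{j=1}^N \Psi_N(j)\, R_j(f),
\end{equation*}
which is the claimed identity. No analytic estimates are needed: the identity is exact for every $N$ and every $\theta>0$. The only thing one should be careful about is checking that the specialization of the Watterson formula is valid for the single-index marginal (i.e.\ that the indicator $\mathbbm{1}_{j\leq N}$ is indeed the correct truncation for the sum over $1\leq j\leq N$), but this is automatic since for $j>N$ both $\alpha_j(\sigma)$ and the corresponding term in \eqref{RandI} vanish.
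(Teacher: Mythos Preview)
Your proof is correct and follows exactly the same approach as the paper: take expectations in the identity \eqref{RandI}, then compute $\mathbb{E}[\alpha_j]$ by specializing Watterson's formula \eqref{generalformula} to the single-index case $r_j=1$, obtaining $\mathbb{E}[\alpha_j]=\frac{\theta}{j}\Psi_N(j)\mathbbm{1}_{j\leq N}$. You have simply spelled out the bookkeeping that the paper leaves implicit.
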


The variance of $I_{\sigma,N}$ is given by the following lemma:
\begin{lemma}
\begin{eqnarray}\label{variance}
&&\Var[I_{\sigma,N}(f)]=\Var\left[\sum_{j=1}^N\alpha_ju_j(f)\right]\nonumber\\
&&\quad=\theta\sum_{j=1}jR_j^2\Psi_N(j)+\theta^2\sum_{j,j'\leq N}R_jR_{j'}\left(\Psi_N(j+j')\mathbbm{1}_{j+j'\leq N}-\Psi_N(j)\Psi_N(j')\right)\nonumber\\
&&\quad=\eta_N^2+\theta\sum_{j=1}jR_j^2(\Psi_N(j)-1)\nonumber\\
&&\quad\qquad+\theta^2\sum_{j,j'\leq N}R_jR_{j'}\left(\Psi_N(j+j')\mathbbm{1}_{j+j'\leq N}-\Psi_N(j)\Psi_N(j')\right)
\end{eqnarray}

\begin{proof}
Again, from the general formula \eqref{generalformula}, we easily see that for any $j\geq 1$ and any $\theta>0$,
\begin{equation}
\mathbb{E}_\theta[\alpha_j\alpha_{j'}]=\frac {\theta^2}{jj'} \Psi_N(j+j')\mathbbm{1}_{j+j'\leq N}
\end{equation}
and
\begin{equation}
\mathbb{E}_\theta[\alpha_j^2]=\frac {\theta^2}{j^2} \Psi_N(2j)\mathbbm{1}_{j\leq N/2}+\frac \theta j \Psi_N(j)\mathbbm{1}_{j\leq N}.
\end{equation}

The variance of $\alpha_j$ is therefore given by
\begin{equation}
\textrm{Var}_\theta[\alpha_j]=\frac \theta j \Psi_N(j)\mathbbm{1}_{j\leq N}+\frac {\theta^2}{ j^2} \Psi_N(2j)\mathbbm{1}_{j\leq N/2}-\frac {\theta^2}{ j^2} \Psi_N(j)^2\mathbbm{1}_{j\leq N}
\end{equation}

and the covariance by
\begin{equation}
\textrm{Cov}_\theta[\alpha_j,\alpha_{j'}]=\frac {\theta^2}{ jj'} \Psi_N(j+j')\mathbbm{1}_{j+j'\leq N}-\frac {\theta^2}{ jj'} \Psi_N(j)\Psi_N(j')\mathbbm{1}_{j\leq N}\mathbbm{1}_{j'\leq N},
\end{equation}
for $j\neq j'$.
Then, the variance of $I_{\sigma,N}$ given in \eqref{variance} follows immediately.
\end{proof}
\end{lemma}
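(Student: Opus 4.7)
The plan is to start from the basic identity \eqref{RandI}, which writes $I_{\sigma,N}(f) = N\int_0^1 f(x)\,dx + \sum_{j=1}^N \alpha_j(\sigma) u_j(f)$ with $u_j(f) = jR_j(f)$. Since the first summand is a deterministic constant it contributes nothing to the variance, so the first equality in the lemma is immediate, and by bilinearity
\begin{equation*}
\Var[I_{\sigma,N}(f)] = \sum_{j=1}^N u_j(f)^2\,\Var(\alpha_j) + \sum_{\substack{1\leq j,j'\leq N\\ j\neq j'}} u_j(f)\,u_{j'}(f)\,\Cov(\alpha_j,\alpha_{j'}).
\end{equation*}

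Next, I would apply Watterson's formula \eqref{generalformula} three times to extract the needed moments. Taking $b=1$, $r_1=1$ gives $\mathbb{E}[\alpha_j] = \frac{\theta}{j}\Psi_N(j)\mathbbm{1}_{j\leq N}$; taking $b=1$, $r_1=2$ gives the factorial moment $\mathbb{E}[\alpha_j(\alpha_j-1)] = \frac{\theta^2}{j^2}\Psi_N(2j)\mathbbm{1}_{2j\leq N}$, from which $\mathbb{E}[\alpha_j^2]$ follows by adding back $\mathbb{E}[\alpha_j]$; and taking $b=2$ with $r_j=r_{j'}=1$ for distinct indices gives $\mathbb{E}[\alpha_j\alpha_{j'}] = \frac{\theta^2}{jj'}\Psi_N(j+j')\mathbbm{1}_{j+j'\leq N}$. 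Subtracting products of first moments then yields the desired variances and covariances, namely
\begin{equation*}
\Var(\alpha_j) = \tfrac{\theta}{j}\Psi_N(j)\mathbbm{1}_{j\leq N} + \tfrac{\theta^2}{j^2}\Psi_N(2j)\mathbbm{1}_{2j\leq N} - \tfrac{\theta^2}{j^2}\Psi_N(j)^2\mathbbm{1}_{j\leq N},
\end{equation*}
and, for $j\neq j'$,
\begin{equation*}
\Cov(\alpha_j,\alpha_{j'}) = \tfrac{\theta^2}{jj'}\bigl(\Psi_N(j+j')\mathbbm{1}_{j+j'\leq N} - \Psi_N(j)\Psi_N(j')\mathbbm{1}_{j\leq N}\mathbbm{1}_{j'\leq N}\bigr).
\end{equation*}

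Finally, I would substitute these back, observing that $u_j^2/j = jR_j^2$ and $u_j u_{j'}/(jj') = R_jR_{j'}$, so the $1/j$ and $1/(jj')$ factors cancel cleanly against the $u$'s. The diagonal terms carrying $\Psi_N(2j)$ and $-\Psi_N(j)^2$ combine with the off-diagonal covariance sum into a single symmetric double sum over $1\leq j,j'\leq N$ (the $j=j'$ contribution corresponds exactly to the two remaining diagonal pieces), giving the first claimed form. The second form is then obtained by splitting $\theta\sum_j jR_j^2\Psi_N(j) = \eta_N^2 + \theta\sum_j jR_j^2(\Psi_N(j)-1)$, which is just the definition of $\eta_N^2$.

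There is no real obstacle here beyond careful bookkeeping: the only subtlety is to keep the indicator functions $\mathbbm{1}_{j\leq N}$, $\mathbbm{1}_{2j\leq N}$, $\mathbbm{1}_{j+j'\leq N}$ straight so that they survive into the final formula in precisely the form displayed, and to check that when passing from the sum $\sum_{j\neq j'}$ plus diagonal terms to the unrestricted double sum $\sum_{j,j'\leq N}$, the diagonal contribution matches exactly (this uses that $\mathbbm{1}_{j\leq N}^2 = \mathbbm{1}_{j\leq N}$ and that in the range of summation $j\leq N$, so $\Psi_N(j)^2\mathbbm{1}_{j\leq N} = \Psi_N(j)\Psi_N(j)$ as needed).
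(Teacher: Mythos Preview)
Your proposal is correct and follows essentially the same approach as the paper: both apply Watterson's factorial-moment formula \eqref{generalformula} to extract $\mathbb{E}[\alpha_j]$, $\mathbb{E}[\alpha_j^2]$, and $\mathbb{E}[\alpha_j\alpha_{j'}]$, derive the variances and covariances of the cycle counts, and then assemble the bilinear expansion of $\Var\bigl[\sum_j \alpha_j u_j\bigr]$. Your write-up is in fact more explicit than the paper's, spelling out the bilinear expansion, the specific choices of $b$ and $r_j$ in Watterson's formula, the merging of the diagonal $\Psi_N(2j)$ and $-\Psi_N(j)^2$ pieces into the unrestricted double sum, and the passage to the second displayed form via the definition of $\eta_N^2$.
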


\begin{remark}
The case where $\theta=1$ is particularly simple. Indeed then
\begin{equation}
\mathbb{E}[I_{\sigma,N}(f)]=N\int_0^1f(x)dx+\theta\sum_{j=1}^N R_j(f)\nonumber\\
\end{equation}
and
\begin{equation}
\Var [I_{\sigma,N}(f)]=\eta_N^2.\nonumber\\
\end{equation}
Thus the asymptotic formulae we give in this work are then exact.
For general values of $\theta>0$, it is possible to derive these asymptotic expressions directly form the explicit formulae given in this Section, without using the bounds on the Feller coupling, but this is not a trivial matter, in particular when $\theta<1$.

\end{remark}

{\sc Acknowledgements}: Kim Dang wishes to thank Ashkan Nikeghbali for his constant support.  This work was initiated at MSRI, during the program on Random Matrices, during the Fall 2010. Both authors thank the organizers for their invitation, and the MSRI  for its generous support. G\'erard Ben Arous is particularly grateful for the granting of an Eisenbud Professorship. Both authors acknowledge the generous support of New York University in AbuDhabi, where this work was completed. The work of G\'erard Ben Arous was supported in part by the National Science Foundation under grants DMS-0806180 and OISE-0730136.

\end{document}